\begin{document}

\numberwithin{equation}{section}

\newcommand{\cA}{\mathcal{A}}
\newcommand{\cB}{\mathcal{B}}
\newcommand{\cE}{\mathcal{E}}
\newcommand{\cF}{\mathcal{F}}
\newcommand{\cG}{\mathcal{G}}
\newcommand{\cI}{\mathcal{I}}
\newcommand{\cJ}{\mathcal{J}}
\newcommand{\cK}{\mathcal{K}}
\newcommand{\cN}{\mathcal{N}}
\newcommand{\cU}{\mathcal{U}}
\newcommand{\cV}{\mathcal{V}}
\newcommand{\cZ}{\mathcal{Z}}

\newcommand{\mC}{\mathbb{C}}
\newcommand{\mN}{\mathbb{N}}
\newcommand{\mR}{\mathbb{R}}
\newcommand{\mZ}{\mathbb{Z}}

\newcommand{\fA}{\mathfrak{A}}
\newcommand{\fJ}{\mathfrak{J}}

\newcommand{\alg}{\operatorname{alg}}
\newcommand{\clos}{\operatorname{clos}}
\newcommand{\eps}{\varepsilon}

\title{Necessary Conditions for Fredholmness\\
of Singular Integral Operators with Shifts\\
and Slowly Oscillating Data}

\runtitle{Singular integral operators with shifts}

\author{Alexei Yu. Karlovich, Yuri I. Karlovich, and Amarino B. Lebre}
\runauthor{A. Karlovich et al.}

\address{A.~Yu.~Karlovich:
Departamento de Matem\'atica,
Faculdade de Ci\^encias e Tecnologia,
Universidade Nova de Lisboa,
Quinta da Torre,
2829--516 Caparica,
Portugal.}

\address{Yu.~I.~Karlovich:
Facultad de Ciencias,
Universidad Aut\'onoma del Estado de Morelos,
Av. Universidad 1001, Col. Chamilpa,
C.P. 62209 Cuernavaca, Morelos,
M\'exico.}

\address{A.~B.~Lebre:
Departamento de Matem\'atica,
Instituto Superior T\'ecnico,
Universidade T\'ecnica de Lisboa,
Av. Rovisco Pais,
1049--001 Lisboa,
Portugal.}

\dedication{Dedicated to Vladimir S. Rabinovich on the occasion of his 70th birthday}

\abstract{%
Suppose $\alpha$ is an orientation-preserving diffeomorphism
(shift) of $\mR_+=(0,\infty)$  onto itself with the only fixed
points $0$ and $\infty$. In \cite{KKLsufficiency} we found
sufficient conditions for the Fredholmness of the singular
integral operator with shift
\[
(aI-bW_\alpha)P_++(cI-dW_\alpha)P_-
\]
acting on $L^p(\mR_+)$ with $1<p<\infty$, where $P_\pm=(I\pm S)/2$,
$S$ is the Cauchy singular integral
operator, and $W_\alpha f=f\circ\alpha$ is the shift operator,
under the assumptions that the coefficients $a,b,c,d$ and the
derivative $\alpha'$ of the shift are bounded and continuous on
$\mR_+$ and may admit discontinuities of slowly oscillating type
at $0$ and $\infty$. Now we prove that those conditions are also necessary.
}

\keywords{%
Orientation-preserving non-Carleman shift; Cauchy singular
integral operator; slowly oscillating function; limit operator; Fredholmness.}

\primclass{45E05}
\secclasses{47A53, 47B35, 47G10, 47G30.}

\received{October 26, 2010}
\logo{00}{2010}{0}{1}{26}
\doi{3445}

\maketitle

\section{Introduction}
A bounded linear operator on a Banach space is said to be Fredholm if its
image is closed and the dimensions of its kernel and the kernel of its
adjoint operator are finite. Let $\mR_+:=(0,+\infty)$. A bounded and
continuous function $f$ on $\mR$ is called slowly oscillating (at $0$ and $\infty$)
if for each (equivalently, for some) $\lambda\in(0,1)$,
\[
\lim_{r\to s}\sup_{t,\tau\in[\lambda r,r]}|f(t)-f(\tau)|=0
\quad (s\in\{0,\infty\}).
\]
The set $SO(\mR_+)$ of all slowly oscillating functions forms a $C^*$-algebra.
This algebra properly contains $C(\overline{\mR}_+)$, the $C^*$-algebra
of all continuous functions on $\overline{\mR}_+:=[0,+\infty]$.
Suppose $\alpha$ is an orientation-preserving diffeomorphism of $\mR_+$
onto itself, which has only two fixed points $0$ and $\infty$.
We say that $\alpha$ is a slowly oscillating shift if $\log\alpha'$
is bounded and $\alpha'\in SO(\mR_+)$. The set of all slowly
oscillating shifts is denoted by $SOS(\mR_+)$.

Through the paper we suppose that $1<p<\infty$ and $1/p+1/q=1$. It is easy to see that if $\alpha\in SOS(\mR_+)$,
then the shift operator $W_\alpha$ defined by $W_\alpha f=f\circ\alpha$
is bounded and invertible on all spaces $L^p(\mR_+)$ and its inverse
is given by $W_\alpha^{-1}=W_\beta$, where $\beta:=\alpha_{-1}$ is the
inverse function to $\alpha$. It is well known that the Cauchy singular integral operator $S$ given by
\[
(Sf)(t):=\lim_{\varepsilon\to 0} \frac{1}{\pi i}
\int_{\mR_+\setminus(t-\varepsilon,t+\varepsilon)}
\frac{f(\tau)}{\tau-t}\:d\tau\quad(t\in\mR_+)
\]
is bounded on all Lebesgue spaces $L^p(\mR_+)$ for $1<p<\infty$. Put $P_\pm:=(I\pm S)/2$.

By $M(\mathfrak{A})$ denote the maximal ideal space of a unital commutative
Banach algebra $\mathfrak{A}$. Identifying the points
$t\in\overline{\mR}_+$ with the evaluation functionals $t(f)=f(t)$
for $f\in C(\overline{\mR}_+)$, we get
$M(C(\overline{\mR}_+))=\overline{\mR}_+$. Consider the fibers
\[
M_s(SO(\mR_+)):=\big\{\xi\in M(SO(\mR_+)):\xi|_{C(\overline{\mR}_+)}=s\big\}
\]
of the maximal ideal space $M(SO(\mR_+))$ over the points
$s\in\{0,\infty\}$. By \cite[Proposition~2.1]{K08}, the set
\[
\Delta:=M_0(SO(\mR_+))\cup M_\infty(SO(\mR_+))
\]
coincides with $\operatorname{clos}_{SO^*}\mR_+\setminus\mR_+$
where $\operatorname{clos}_{SO^*}\mR_+$ is the weak-star closure
of $\mR_+$ in the dual space of $SO(\mR_+)$. Then $M(SO(\mR_+))=\Delta\cup\mR_+$.
In what follows we write $a(\xi):=\xi(a)$ for every $a\in SO(\mR_+)$ and
every $\xi\in\Delta$. This paper is a continuation of our work
\cite{KKLsufficiency}, where the following result was proved.
\begin{theorem}[{\cite[Theorem~1.2]{KKLsufficiency}}]\label{th:sufficiency}
Let $a,b,c,d\in SO(\mR_+)$ and $\alpha\in SOS(\mR_+)$. The
singular integral operator
\begin{equation}\label{eq:def-N}
N:= (aI-bW_\alpha)P_++(cI-dW_\alpha)P_-
\end{equation}
with the shift $\alpha$ is Fredholm on the space $L^p(\mR_+)$ if
the following two conditions are fulfilled:
\begin{enumerate}
\item[{\rm(i)}] the functional operators $A_+:=a I-bW_\alpha$ and
$A_-:=cI-dW_\alpha$ are invertible on the space $L^p(\mR_+)$;

\item[{\rm(ii)}] for every pair $(\xi,x)\in\Delta\times\mR$,
\begin{align}
n_\xi(x)
&:=
\Big[a(\xi)-b(\xi)e^{i\omega(\xi)(x+i/p)}\Big]
\frac{1+\coth[\pi(x+i/p)]}{2}
\nonumber
\\
&\quad+
\Big[c(\xi)-d(\xi)e^{i\omega(\xi)(x+i/p)}\Big]
\frac{1-\coth[\pi(x+i/p)]}{2}\ne 0,
\label{eq:def-n}
\end{align}
where $\omega(t):=\log[\alpha(t)/t]\in SO(\mR_+)$.
\end{enumerate}
\end{theorem}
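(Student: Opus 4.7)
The plan is to prove sufficiency by combining three standard ingredients for singular integral operators with slowly oscillating data: compactness of commutators, a localization principle, and a limit--operator analysis that produces the Mellin symbol $n_\xi(x)$ of \eqref{eq:def-n}.

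First I would verify that for every $a\in SO(\mR_+)$ and every $\alpha\in SOS(\mR_+)$ the commutators $[aI,S]$, $[W_\alpha,S]$ and $[aI,W_\alpha]$ are compact on $L^p(\mR_+)$; slow oscillation of $a$ and of $\alpha'$ is precisely what makes these commutators compact, and this is the technical reason for singling out the class $SO(\mR_+)$. Consequently, in the Calkin algebra the cosets of $P_+$, $P_-$, $W_\alpha$ and of the multiplications $aI$ with $a\in SO(\mR_+)$ commute sufficiently to generate a Banach algebra whose spectrum is accessible by local principles. Using condition~(i), Fredholmness of $N$ is equivalent to Fredholmness of $A_-^{-1}N=(A_-^{-1}A_+)P_++P_-$, an operator of the simpler form $CP_++P_-$ with $C$ an invertible functional operator with shift whose coefficients again lie in $SO(\mR_+)$.

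Next I would apply an Allan--Douglas type localization over the maximal ideal space $M(SO(\mR_+))=\Delta\cup\mR_+$. At an interior point $t\in\mR_+$ the data $a,b,c,d,\alpha'$ are continuous and $\alpha$ is locally smooth, so the local coset is invertible by standard symbol calculus for the Cauchy operator. The essential work is at the boundary fibers $\xi\in\Delta$. For such a $\xi$, pick a net $t_\nu\in\mR_+$ tending to $0$ or $\infty$ and realizing $\xi$ in the Gelfand topology, and employ the $L^p(\mR_+)$-isometric dilations $V_{t_\nu}f(t)=t_\nu^{1/p}f(t_\nu t)$, which commute with $S$. Then $V_{t_\nu}^{-1}(aI)V_{t_\nu}\to a(\xi)I$ strongly for $a\in SO(\mR_+)$, and a direct computation gives $V_{t_\nu}^{-1}W_\alpha V_{t_\nu}f(t)=f\bigl(t\,e^{\omega(t/t_\nu)}\bigr)$, which by slow oscillation of $\omega$ tends strongly to the pure dilation $f\mapsto f(t\,e^{\omega(\xi)})$. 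Passing to the Mellin transform on the line $\{z=1/p+ix\}$, the operator $S$ becomes multiplication by $-\coth[\pi(x+i/p)]$ and the dilation by $e^{\omega(\xi)}$ becomes multiplication by $e^{i\omega(\xi)(x+i/p)}$; hence the limit operator of $N$ at $\xi$ reduces to multiplication by exactly $n_\xi(x)$, so condition~(ii) delivers its invertibility.

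The main obstacle will be making the limit-operator construction fully rigorous on $L^p$ (not merely $L^2$) and then patching the pointwise local inverses into a genuine Fredholm regularizer uniformly with respect to the non-metrizable index set $\Delta$. Because $\Delta$ is not first-countable, one cannot argue sequentially; one must instead exhibit the family $\{n_\xi(\cdot)\}_{\xi\in\Delta}$ as a continuous function on the compactum $\Delta\times\overline{\mR}$ (the values at $x=\pm\infty$ being controlled by condition~(i), since $\coth[\pi(x+i/p)]\to\pm 1$ there) and realize it as the Gelfand symbol in a suitable commutative Banach subalgebra of the Calkin algebra containing the coset of $N$. Pointwise non-vanishing of such a continuous symbol on a compactum then forces uniform invertibility and thus yields a two-sided parametrix for $N$ modulo compacts, completing the proof.
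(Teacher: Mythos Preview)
This theorem is not proved in the present paper; it is cited from the companion work \cite{KKLsufficiency}, so there is no proof here to compare against line by line. From the machinery of \cite{KKLsufficiency} that is recalled in Sections~\ref{sec:Mellin-convolution}--\ref{sec:localization}, however, one can see that the sufficiency argument there is organized differently from yours. You first use condition~(i) to reduce $N$ to $A_-^{-1}N=CP_++P_-$ and then localize over $M(SO(\mR_+))=\Delta\cup\mR_+$, identifying the local representative at each $\xi\in\Delta$ via a dilation limit. In \cite{KKLsufficiency} one instead introduces the larger central subalgebra $\cZ=\alg_\cB\{I,S,cR,K:c\in SO(\mR_+),\ K\in\cK\}$ and localizes over $M(\cZ^\pi)\cong\{-\infty,+\infty\}\cup(\Delta\times\mR)$ (Theorem~\ref{th:localization-realization}). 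The two isolated points $\pm\infty$ then yield precisely the invertibility of $A_\pm$ (condition~(i), cf.\ Lemma~\ref{le:lifting}), while each point $(\xi,x)$ gives the scalar condition $n_\xi(x)\ne 0$ directly. This finer localization is exactly what dissolves the uniformity-over-$\Delta$ obstacle you flag at the end: since the fibers are already indexed by both $\xi$ and $x$, the Allan--Douglas principle itself performs the patching, and no separate compactness argument on $\Delta\times\overline{\mR}$ is needed. Note also that dilation limit operators are not used for sufficiency in \cite{KKLsufficiency}; they enter in the present paper (Lemma~\ref{le:LO-dilations}) to prove the converse.

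Two technical slips in your outline. First, the Mellin symbol of $S$ is $s_p(x)=\coth[\pi(x+i/p)]$, not $-\coth$ (Theorem~\ref{th:algebra-A}); with your sign the roles of $P_+$ and $P_-$ in \eqref{eq:def-n} would be interchanged. Second, with your normalization $V_{t_\nu}f(t)=t_\nu^{1/p}f(t_\nu t)$ the conjugate $V_{t_\nu}^{-1}W_\alpha V_{t_\nu}$ produces $\omega(t/t_\nu)$, whose argument tends to the \emph{opposite} endpoint when $t_\nu$ realizes $\xi$, so the limit is not $\omega(\xi)$; compare the paper's convention $V_xf(t)=f(t/x)$, for which $\omega(h_n^\xi t)\to\omega(\xi)$ as in Lemma~\ref{le:LO-dilations}.
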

It turns out that the sufficient conditions for the Fredholmness
of the operator $N$ contained in Theorem~\ref{th:sufficiency} are also
necessary.
\begin{theorem}[Main result]
\label{th:main}
Suppose $a,b,c,d\in SO(\mR_+)$ and $\alpha\in SOS(\mR_+)$. If the operator
$N$ given by \eqref{eq:def-N} is Fredholm on $L^p(\mR_+)$, then conditions
{\rm (i)} and {\rm(ii)} of Theorem~{\rm\ref{th:sufficiency}} are fulfilled.
\end{theorem}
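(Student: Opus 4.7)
The plan is to use a single-stage limit operator argument. From Fredholmness of $N$, conjugation by spatial dilations indexed by sequences converging (in the Gelfand topology) to each $\xi\in\Delta$ produces a constant-coefficient limit operator $N_\xi$ on $L^p(\mR_+)$. Invertibility of $N_\xi$, forced by the limit operator theorem, is equivalent via the Mellin transform to $\inf_{x\in\mR}|n_\xi(x)|>0$; this yields condition~(ii). The local conditions for invertibility of $A_\pm$ at $\xi$ then follow by comparing $n_\xi$ with the functional symbols $h_\pm^\xi(x)$ to which $n_\xi$ asymptotes as $x\to\pm\infty$, and by exploiting their periodicity in $x$. Collecting across $\xi\in\Delta$ and invoking the Fredholm theory of functional operators with slowly oscillating data yields condition~(i).

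\textbf{Necessity of (ii).} Fix $\xi\in M_\infty(SO(\mR_+))$ (the case $M_0$ is symmetric); pick $\{t_n\}\subset\mR_+$ with $t_n\to\infty$ and $t_n\to\xi$ in the Gelfand topology, and set $V_nf(t):=t_n^{-1/p}f(t/t_n)$, an isometry of $L^p(\mR_+)$ converging weakly to $0$. A direct computation gives $V_n^{-1}(fI)V_n=f(t_n\cdot)I$, $V_n^{-1}SV_n=S$, and $V_n^{-1}W_\alpha V_n=W_{\alpha_n}$ with $\alpha_n(t)=t\exp[\omega(t_nt)]$, where $\omega(t)=\log[\alpha(t)/t]\in SO(\mR_+)$. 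Slow oscillation of $a,b,c,d,\omega$ then delivers strong operator convergence to the constants $a(\xi),b(\xi),c(\xi),d(\xi)$ and to $W_{\alpha_\xi}$ with $\alpha_\xi(t)=e^{\omega(\xi)}t$. Since conjugation by $V_n$ annihilates compact operators strongly, the limit operator theorem forces invertibility of
\[
N_\xi:=(a(\xi)I-b(\xi)W_{\alpha_\xi})P_++(c(\xi)I-d(\xi)W_{\alpha_\xi})P_-
\]
on $L^p(\mR_+)$. Under the Mellin transform, $S$ becomes multiplication by $\coth[\pi(x+i/p)]$ and $W_{\alpha_\xi}$ by $e^{i\omega(\xi)(x+i/p)}$, identifying the Mellin symbol of $N_\xi$ as $n_\xi(x)$; invertibility of $N_\xi$ on $L^p(\mR_+)$ is therefore equivalent to $\inf_{x\in\mR}|n_\xi(x)|>0$, which in particular gives condition~(ii).

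\textbf{Necessity of (i).} Since $[1\mp\coth(\pi(x+i/p))]/2$ decay exponentially as $x\to\pm\infty$,
\[
\bigl|n_\xi(x)-h_+^\xi(x)\bigr|\to 0\ (x\to+\infty),\qquad\bigl|n_\xi(x)-h_-^\xi(x)\bigr|\to 0\ (x\to-\infty),
\]
where $h_+^\xi(x):=a(\xi)-b(\xi)e^{i\omega(\xi)(x+i/p)}$ and $h_-^\xi(x):=c(\xi)-d(\xi)e^{i\omega(\xi)(x+i/p)}$ are the Mellin symbols of the constant-coefficient functional operators $a(\xi)I-b(\xi)W_{\alpha_\xi}$ and $c(\xi)I-d(\xi)W_{\alpha_\xi}$. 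Combined with $\inf_x|n_\xi(x)|>0$, this yields $\liminf_{x\to\pm\infty}|h_\pm^\xi(x)|>0$. But $h_\pm^\xi$ is either constant (if $\omega(\xi)=0$) or periodic of period $2\pi/|\omega(\xi)|$, so the liminf at infinity coincides with $\inf_{x\in\mR}|h_\pm^\xi(x)|$; the local Mellin symbols of $A_\pm$ at $\xi$ are therefore uniformly bounded away from $0$. Collecting this condition across all $\xi\in\Delta$ (the interior points $t\in\mR_+$ reducing, by continuity of the data, to the same symbolic form) and invoking the Fredholm criterion for functional operators $aI-bW_\alpha$ with $a,b\in SO(\mR_+)$ and $\alpha\in SOS(\mR_+)$---which converts these local conditions into global Fredholmness, with index zero by a norm-continuous deformation within the class---yields the invertibility of $A_+$ and $A_-$ on $L^p(\mR_+)$.

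\textbf{Main obstacle.} The delicate step is the justification of the limit operator argument: verifying that the dilations $V_n$ sit inside a framework in which Fredholmness of $N$ really implies invertibility (not merely Fredholmness) of the strong limit $N_\xi$, and that $V_n^{-1}KV_n\to 0$ strongly for every compact $K$. This rests on the weak convergence $V_n\rightharpoonup 0$ (a consequence of $t_n\to\infty$) together with uniform Mellin-multiplier bounds of Mikhlin type for the symbols of $P_\pm$. A secondary technical point is the passage from local Mellin-symbol conditions at each $\xi\in\Delta$ to global invertibility of $A_\pm$ on $L^p(\mR_+)$, which relies on the previously established Fredholm theory for functional operators with slowly oscillating coefficients and shifts, combined with the index-zero deformation argument.
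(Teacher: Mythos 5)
Your treatment of condition (ii) is essentially the paper's: dilation limit operators at each $\xi\in\Delta$, invertibility forced by the limit-operator theorem, and the Mellin/$SAP_p$ identification giving $\inf_x|n_\xi(x)|>0$. Your extraction of the fiber conditions $\inf_x|h_\pm^\xi(x)|>0$ from the asymptotics of $n_\xi$ at $x\to\pm\infty$ is also a legitimate (and arguably slicker) alternative to the paper's route, which instead localizes $N_{\cV_{h^\xi}^s}$ at $\pm\infty$ via the ideals $\cJ_{\pm\infty}$ and a modulation limit-operator argument (Lemma~\ref{le:binom-mult}) to get invertibility of $a(\xi)I-b(\xi)W_{\alpha_\xi}$ and $c(\xi)I-d(\xi)W_{\alpha_\xi}$.

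The genuine gap is in the last step of your argument for (i). What you obtain from the limit operators is only the family of pointwise conditions $|a(\xi)|-|b(\xi)|(\alpha'(\xi))^{-1/p}\ne 0$ for $\xi\in\Delta$, which (after using connectedness of the fibers, which you do not invoke) yields: for each $s\in\{0,\infty\}$, either $L_*(s;a,b,\alpha)>0$ or $L^*(s;a,b,\alpha)<0$. This is strictly weaker than the hypotheses of the invertibility criterion (Theorem~\ref{th:FO}), and there is no ``Fredholm criterion'' that converts these boundary conditions into global invertibility of $aI-bW_\alpha$. Two things are missing. First, the mixed case $L^*(0;a,b,\alpha)<0<L_*(\infty;a,b,\alpha)$ (or its reverse) is compatible with all your fiber conditions, yet in that case $aI-bW_\alpha$ has an infinite-dimensional kernel or cokernel; ruling it out requires using the Fredholmness of the full operator $N$ again, which the paper does by constructing the one-sided ``projections'' $\Pi_r,\Pi_l$ of Lemma~\ref{le:FO-aux} and deriving the contradiction that $P_+\widetilde{\chi}_\gamma I$ would be compact. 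Second, even in the unmixed case one still needs the global condition $\inf_{t\in\mR_+}|a(t)|>0$ (resp.\ $\inf_{t\in\mR_+}|b(t)|>0$); the behaviour of $a$ at interior points of $\mR_+$ is invisible to the limit operators at $\Delta$, and $a$ could vanish at some $t_0\in\mR_+$ without violating any of your conditions. The paper's Lemma~\ref{le:nec-2} establishes this by a perturbation of the coefficient, localization in $\Lambda_{+\infty}$, and a second family of limit operators (modulations) to reach a contradiction. Neither of these steps is supplied by your ``index zero by a norm-continuous deformation'' remark, and the parenthetical claim that interior points ``reduce to the same symbolic form'' is not correct for the functional operators $A_\pm$.
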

The proof of Theorem~\ref{th:main} is based on the method of
limit operators, which was essentially developed by V. S. Rabinovich
(see, e.g., \cite{BKR00,RRS04,L06} and the references therein), and
on the Allan-Douglas localization (see \cite{BS06}).
The paper is organized as follows. In Section~\ref{sec:SO-SOS} we collect
properties of slowly oscillating functions and slowly oscillating shifts.
In Section~\ref{sec:Mellin-convolution} we recall properties of Mellin
convolution operators with piecewise continuous and semi-almost periodic
symbols. In Section~\ref{sec:LO} we recall that if an $A$ operator is invertible
modulo some ideal $\fJ$ and the limit operators for all operators in this ideal vanish, then
the limit operator of $A$ is invertible whenever it exists. Further we calculate
the limit operators of the operator $N$ with respect to two different systems
of pseudoisometries (dilations and modulations). Let $\cK$ be the ideal of
all compact operators on $L^p(\mR_+)$. In \cite{KKLsufficiency} we introduced
the algebra $\cZ$ generated by the ideal $\cK$, the operators $I,S$, and
$cR$, where $c\in SO(\mR_+)$ and $R$ is the operator
with fixed singularities at $0$ and $\infty$ given by
\[
(R f)(t):=\frac{1}{\pi i}\int_0^\infty\frac{f(\tau)}{\tau+t}\quad (t\in\mR_+).
\]
It turns out that the algebra $\Lambda$ of all operators commuting with the
elements of $\cZ$ modulo the ideal $\cK$ contains the operator $N$.
In Section~\ref{sec:localization} we state a consequence of the Allan-Douglas
local principle  for $A\in\Lambda$, which was obtained in \cite{KKLsufficiency}.
In Section~\ref{sec:FO} we formulate an invertibility criterion for $aI-bW_\alpha$
with slowly oscillating data (Theorem~\ref{th:FO}) and
prove two auxiliary statements: a corollary of Theorem~\ref{th:FO}
related to the existence of infinite dimensional kernel or cokernel for $aI-bW_\alpha$
and a criterion for the invertibility of $aI-bW_\alpha$ with multiplicative shift
$\alpha$.  In the proof of the latter result we use limit operators with
respect to a specially chosen system of modulations, so that the limit
operator of $W_\alpha$ is equal to $W_\alpha$.
Section~\ref{sec:necessity} is devoted to the proof of Theorem~\ref{th:main}.
First we observe that the limit operators with respect to  dilations
are
\begin{equation}\label{eq:LO-introduction}
\big(a(\xi)I-b(\xi)W_{\alpha_\xi}\big)P_++
\big(c(\xi)I-d(\xi)W_{\alpha_\xi}\big)P_-,
\end{equation}
where $\xi\in\Delta$ and $\alpha_\xi(t)=e^{\omega(\xi)}t$ is a multiplicative shift.
Since $N$ is Fredholm, all limit operators are invertible for $\xi\in\Delta$.
Applying the results of Sections~\ref{sec:localization} and \ref{sec:FO},
we prove that then the operators $a(\xi)I-b(\xi)W_{\alpha_\xi}$ and
$c(\xi)I-d(\xi)W_{\alpha_\xi}$ are invertible for all $\xi\in\Delta$.
Since the fibers $M_0(SO(\mR_+))$ and $M_\infty(SO(\mR_+))$ are connected,
from the above observation and Theorem~\ref{th:FO} it follows that the operators
$aI-bW_\alpha$ and $cI-dW_\alpha$ are invertible, and this is condition (i) of
Theorem~\ref{th:sufficiency}. On the other hand, the  (invertible) limit
operators \eqref{eq:LO-introduction} are similar to the Mellin convolution
operators with the semi-almost periodic symbols $n_\xi$. Applying the invertibility
criterion for such operators (Theorem~\ref{th:invertibility-convolution}),
we arrive at condition (ii) of Theorem~\ref{th:sufficiency}.
\section{Slowly oscillating functions and shifts}\label{sec:SO-SOS}
\subsection{Properties of slowly oscillating functions}
The following two lemmas give important properties of the fibers
$M_0(SO(\mR_+))$ and $M_\infty(SO(\mR_+))$.
\begin{lemma}[{\cite[Proposition~2.2]{K08}}]
\label{le:SO-fundamental-property}
Let $\{a_k\}_{k=1}^\infty$ be a countable subset of the space $SO(\mR_+)$ and
$s\in\{0,\infty\}$. For each $\xi\in M_s(SO(\mR_+))$ there exists a
sequence $\{t_n\}\subset\mR_+$ such that $t_n\to s$ as $n\to\infty$ and
\begin{equation}\label{eq:SO-fundamental-property}
\xi(a_k)=\lim_{n\to\infty}a_k(t_n)\quad\mbox{for all}\quad k\in\mN.
\end{equation}
Conversely, if $\{t_n\}\subset\mR_+$ is a sequence such that $t_n\to s$
as $n\to\infty$, then there exists a functional $\xi\in M_s(SO(\mR_+))$
such that \eqref{eq:SO-fundamental-property} holds.
\end{lemma}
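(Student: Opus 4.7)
The plan is to leverage the identification $\Delta=\operatorname{clos}_{SO^*}\mR_+\setminus\mR_+$ recorded in the introduction. This says precisely that every $\xi\in\Delta$ is a weak-$*$ limit point in $SO(\mR_+)^*$ of the evaluation functionals $\delta_t\colon a\mapsto a(t)$ with $t\in\mR_+$. The maximal ideal space $M(SO(\mR_+))$ is weak-$*$ compact but typically non-metrizable, so the two directions will be handled by somewhat different tools: the forward direction by an inductive choice of points exploiting weak-$*$ density, the converse by a diagonal subsequence combined with weak-$*$ compactness.

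For the forward direction, I fix $\xi\in M_s(SO(\mR_+))$ and choose an auxiliary function $g_s\in C(\overline{\mR}_+)\subset SO(\mR_+)$ that vanishes only at $s$; concretely, $g_\infty(t):=1/(1+t)$ and $g_0(t):=t/(1+t)$. Since $\xi|_{C(\overline{\mR}_+)}$ is evaluation at $s$, one has $\xi(g_s)=0$. For each $n\in\mN$ the weak-$*$ neighbourhood
\[
U_n:=\Big\{\eta\in SO(\mR_+)^*:\ |\eta(a_k)-\xi(a_k)|<1/n\ \text{for}\ 1\le k\le n,\ |\eta(g_s)|<1/n\Big\}
\]
of $\xi$ meets the set of evaluations $\{\delta_t:t\in\mR_+\}$ by weak-$*$ density, so I can pick $t_n\in\mR_+$ with $\delta_{t_n}\in U_n$. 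The inequality $|g_s(t_n)|<1/n$, together with continuity of $g_s$ on $\overline{\mR}_+$ and the fact that $g_s>0$ off $s$, forces $t_n\to s$; the remaining inequalities give $a_k(t_n)\to\xi(a_k)$ for every fixed $k$.

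For the converse, I start from $t_n\to s$ and first pass to a subsequence $\{t_{n_j}\}$ along which every $a_k(t_{n_j})$ converges as $j\to\infty$; this is possible by a standard diagonal argument since each $a_k\in SO(\mR_+)$ is bounded. The evaluations $\delta_{t_{n_j}}$ lie in the weak-$*$ compact set $M(SO(\mR_+))$, hence have a weak-$*$ cluster point $\xi\in M(SO(\mR_+))$; since each sequence $\{a_k(t_{n_j})\}_j$ already converges, $\xi(a_k)$ must equal its limit for every $k$. For any $f\in C(\overline{\mR}_+)$ the continuity of $f$ on $\overline{\mR}_+$ forces $f(t_{n_j})\to f(s)$, so $\xi|_{C(\overline{\mR}_+)}$ is evaluation at $s$ and $\xi\in M_s(SO(\mR_+))$ as required.

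The main obstacle is the non-metrizability of $M(SO(\mR_+))$, which prevents one from simply extracting a weak-$*$ convergent subsequence of evaluation functionals on an abstractly chosen $\xi$. The separating function trick in the forward direction and the diagonal-plus-cluster-point argument in the converse circumvent this. In both directions the countability of the family $\{a_k\}$ is essential: it reduces the required control to finitely many constraints at each stage (forward) or to a diagonal choice over countably many bounded sequences (converse).
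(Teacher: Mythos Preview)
The paper does not supply a proof of this lemma; it is quoted from \cite[Proposition~2.2]{K08} without argument, so there is no in-paper proof to compare against. Your argument is correct and self-contained. The forward direction, combining the weak-$*$ description $\Delta=\operatorname{clos}_{SO^*}\mR_+\setminus\mR_+$ with a separating function $g_s\in C(\overline{\mR}_+)$ to simultaneously force $t_n\to s$ and $a_k(t_n)\to\xi(a_k)$, is the standard and efficient route. In the converse you obtain $\xi(a_k)=\lim_j a_k(t_{n_j})$ along a \emph{subsequence} rather than the literal \eqref{eq:SO-fundamental-property} for the full sequence; this is unavoidable, since the limits $\lim_n a_k(t_n)$ need not exist in general, and it matches how the converse is actually invoked in the paper (e.g.\ in Lemma~\ref{le:connected-fibers}, where the relevant limit exists by construction).
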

\begin{lemma}\label{le:connected-fibers}
The fibers $M_0(SO(\mR_+))$ and $M_\infty(SO(\mR_+))$ are connected
compact Hausdorff spaces.
\end{lemma}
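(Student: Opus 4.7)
The compactness and Hausdorffness of both fibers fall out of general Gelfand theory: $M(SO(\mR_+))$ is compact Hausdorff in the weak-star topology, the restriction map
\[
\rho\colon M(SO(\mR_+))\to M(C(\overline{\mR}_+))=\overline{\mR}_+,\quad \rho(\xi)=\xi|_{C(\overline{\mR}_+)},
\]
is continuous, and each $M_s(SO(\mR_+))=\rho^{-1}(\{s\})$ for $s\in\{0,\infty\}$ is the preimage of a point, hence closed and thus compact Hausdorff. So only connectedness requires work.

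My plan is to write each fiber as a decreasing intersection of compact connected subsets of $M(SO(\mR_+))$ and then invoke the standard fact that such intersections are connected in a Hausdorff space. I will treat $M_\infty(SO(\mR_+))$; the case of $M_0(SO(\mR_+))$ is identical via the intervals $(0,r]$ as $r\to 0^+$. For $r>0$ I set
\[
X_r:=\operatorname{clos}_{M(SO(\mR_+))}[r,\infty),
\]
where $[r,\infty)$ is identified with its image under the evaluation embedding $t\mapsto\delta_t$. Since every $f\in SO(\mR_+)$ is continuous on $\mR_+$, this embedding is continuous, so the image of $[r,\infty)$ is connected and $X_r$, as the closure of a connected set, is connected. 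The family $\{X_r\}_{r>0}$ is manifestly decreasing.

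The crux is the identity $M_\infty(SO(\mR_+))=\bigcap_{r>0}X_r$. For $\supseteq$, if $\xi$ lies in every $X_r$, then for any $f\in C(\overline{\mR}_+)$ and $\eps>0$ the weak-star neighborhood $\{\eta:|\eta(f)-\xi(f)|<\eps\}$ meets $[r,\infty)$ for all $r$, so $f(t)$ gets arbitrarily close to $\xi(f)$ for arbitrarily large $t$; since $f(t)\to f(\infty)$, we conclude $\xi(f)=f(\infty)$, i.e.\ $\xi\in M_\infty(SO(\mR_+))$. The reverse inclusion is what I expect to be the main obstacle, and it will be supplied by Lemma~\ref{le:SO-fundamental-property}: given $\xi\in M_\infty(SO(\mR_+))$ and any basic neighborhood of $\xi$ determined by finitely many $f_1,\dots,f_m\in SO(\mR_+)$ and $\eps>0$, the lemma furnishes a sequence $t_n\to\infty$ with $f_j(t_n)\to\xi(f_j)$ for each $j$, so for $n$ large the point $t_n\in[r,\infty)$ lies in the chosen neighborhood, giving $\xi\in X_r$. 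With the identity established, $M_\infty(SO(\mR_+))$ is a decreasing intersection of compact connected subsets of a compact Hausdorff space and is therefore connected.
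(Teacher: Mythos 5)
Your proof is correct, but it follows a genuinely different route from the paper's. The paper argues by contradiction: assuming $M_s(SO(\mR_+))=X_1\cup X_2$ with $X_1,X_2$ disjoint, closed and nonempty, it builds a continuous separating function on the fiber, extends it to all of $M(SO(\mR_+))$ by Tietze, realizes the extension as the Gelfand transform of some $a\in SO(\mR_+)$ (using that $SO(\mR_+)$ is a $C^*$-algebra, so the Gelfand transform is onto $C(M(SO(\mR_+)))$), and then uses Lemma~\ref{le:SO-fundamental-property} in \emph{both} directions together with the intermediate value theorem on $\mR_+$ to manufacture a functional in the fiber whose value $1/2$ lands in the forbidden gap. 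You instead exhibit the fiber as $\bigcap_{r}\clos_{M(SO(\mR_+))}[r,\infty)$ (resp.\ $\bigcap_r\clos(0,r]$), a nested intersection of compact connected sets, and invoke the standard continuum-intersection theorem; connectedness of $\mR_+$ enters through the continuity of the evaluation embedding rather than through the intermediate value theorem, and Lemma~\ref{le:SO-fundamental-property} is needed only in the direction ``functional $\Rightarrow$ approximating sequence'' (applied to the finitely many functions defining a basic Gelfand neighborhood), while your $\supseteq$ inclusion is elementary. Both arguments are sound; yours is more structural (it identifies the fiber as a continuum, the ``remainder over $s$'' of the tails of $\mR_+$, and avoids Tietze and the surjectivity of the Gelfand transform), whereas the paper's is more hands-on and produces an explicit slowly oscillating function witnessing any hypothetical disconnection. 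The only points worth making fully explicit in your write-up are the routine verifications that the continuum-intersection theorem applies to the totally ordered (not just sequential) family $\{X_r\}_{r>0}$ and that each $X_r$ is nonempty, both of which are immediate.
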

\begin{proof}
Fix $s\in\{0,\infty\}$. Since $M_s(SO(\mR_+))$ is a closed subset of the
compact Hausdorff space $M(SO(\mR_+))$, we conclude that $M_s(SO(\mR_+))$ also
is a compact Hausdorff space. Suppose the fiber $M_s(SO(\mR_+))$ is disconnected.
Then there exist two disjoint closed subsets $X_1$ and $X_2$ such that
$M_s(SO(\mR_+))=X_1\cup X_2$. Take a continuous function $\widehat{a}$ on
$M_s(SO(\mR_+))$ such that $\widehat{a}(X_1)\subset[0,1/3]$ and
$\widehat{a}(X_2)\subset[2/3,1]$. By the Tietze extension theorem
(see e.g. \cite[Theorem~IV.11]{RS80}),
the function $\widehat{a}$ is extended to a continuous function on the whole
compact space $M(SO(\mR_+))$. We denote this extension again by $\widehat{a}$.
Because $SO(\mR_+)$ is a $C^*$-algebra, the function $\widehat{a}\in C(M(SO(\mR_+)))$
is the Gelfand transform of a function $a\in SO(\mR_+)$. Then in view of
Lemma~\ref{le:SO-fundamental-property} there are sequences $t_n',t_n''\to s$
such that there exist $\lim\limits_{n\to\infty}a(t_n')\in[0,1/3]$ and
$\lim\limits_{n\to\infty}a(t_n'')\in[2/3,1]$. Since $a\in SO(\mR_+)$
is continuous on $\mR_+$, there are points $t_n$ between $t_n'$ and $t_n''$
such that $a(t_n)=1/2$. Then $t_n\to s$, $\displaystyle\lim_{n\to\infty}
a(t_n)=1/2$, and hence $1/2\in\widehat{a}(X_1)\cup\widehat{a}(X_2)$, a
contradiction. Thus, $M_s(SO(\mR_+))$ is a connected set.
\QED
\end{proof}
Repeating literally the proofs of \cite[Proposition~3.3]{KKL03} and
\cite[Lemma~3.5]{KKL03}, we obtain the following two statements.
\begin{lemma}\label{le:SO-nec}
Suppose $\varphi\in C^1(\mR_+)$ and put $\psi(t):=t\varphi'(t)$
for $t\in\mR_+$. If $\varphi,\,\psi\in SO(\mR_+)$, then
$\lim\limits_{t\to s}\psi(t)=0$ for $s\in\{0,\infty\}$.
\end{lemma}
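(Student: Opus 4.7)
The plan is to argue by contradiction using the fundamental theorem of calculus to relate $\varphi$ and $\psi$ on intervals of the form $[\lambda r, r]$, and then exploit the slowly oscillating nature of both functions.

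Fix $s\in\{0,\infty\}$ and suppose, for contradiction, that $\psi(t)$ does \emph{not} tend to $0$ as $t\to s$. Then there exist $\delta>0$ and a sequence $r_n\to s$ with $|\psi(r_n)|\ge\delta$ for all $n$. Fix any $\lambda\in(0,1)$. Since $\psi\in SO(\mR_+)$, the definition of slow oscillation yields
\[
\sup_{t\in[\lambda r_n,r_n]}|\psi(t)-\psi(r_n)|<\delta/2
\]
for all sufficiently large $n$. Consequently $|\psi(t)|\ge\delta/2$ on $[\lambda r_n,r_n]$ for such $n$, and $\psi$ has constant sign on each of these intervals.

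Next I invoke the relation $\varphi'(t)=\psi(t)/t$, which gives, via the fundamental theorem of calculus,
\[
\varphi(r_n)-\varphi(\lambda r_n)=\int_{\lambda r_n}^{r_n}\frac{\psi(t)}{t}\,dt.
\]
Because $\psi$ does not change sign on $[\lambda r_n,r_n]$ and satisfies $|\psi(t)|\ge\delta/2$ there, the integrand has constant sign and
\[
|\varphi(r_n)-\varphi(\lambda r_n)|
\ge\frac{\delta}{2}\int_{\lambda r_n}^{r_n}\frac{dt}{t}
=\frac{\delta}{2}\log(1/\lambda)>0.
\]
Thus $|\varphi(r_n)-\varphi(\lambda r_n)|$ is bounded below by a positive constant independent of $n$, which contradicts the slow oscillation of $\varphi$ at $s$, since the latter forces $\sup_{t,\tau\in[\lambda r_n,r_n]}|\varphi(t)-\varphi(\tau)|\to0$.

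The only subtlety I expect is making sure that the constancy of sign of $\psi$ on each interval $[\lambda r_n,r_n]$ is justified; this is precisely why one first replaces $|\psi(r_n)|\ge\delta$ by the uniform estimate $|\psi(t)|\ge\delta/2$ on the whole interval, using the $SO$ property of $\psi$. Once that is in hand, the computation is routine and the contradiction is immediate.
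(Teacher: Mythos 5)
Your argument is correct in substance and is essentially the standard one: the paper does not reproduce a proof here but defers to Proposition~3.3 of \cite{KKL03}, whose argument is exactly this computation — integrate $\varphi'(t)=\psi(t)/t$ over $[\lambda r,r]$, so that the slow oscillation of $\psi$ makes the integral approximately $\psi(r)\log(1/\lambda)$, while the slow oscillation of $\varphi$ forces the left-hand side to vanish as $r\to s$.

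One small point to repair: since $SO(\mR_+)$ is a $C^*$-algebra of complex-valued functions, the lemma is stated for complex $\varphi$, and then ``$\psi$ has constant sign on $[\lambda r_n,r_n]$'' is not meaningful, so the passage from $|\psi|\ge\delta/2$ to $\bigl|\int_{\lambda r_n}^{r_n}\psi(t)\,dt/t\bigr|\ge\frac{\delta}{2}\log(1/\lambda)$ is not justified as written. The estimate survives with a one-line change: write $\psi(t)=\psi(r_n)+\bigl(\psi(t)-\psi(r_n)\bigr)$ on $[\lambda r_n,r_n]$, so that
\[
\Bigl|\int_{\lambda r_n}^{r_n}\frac{\psi(t)}{t}\,dt\Bigr|
\ge |\psi(r_n)|\log(1/\lambda)-\frac{\delta}{2}\log(1/\lambda)
\ge\frac{\delta}{2}\log(1/\lambda),
\]
using $|\psi(r_n)|\ge\delta$ and $\sup_{t\in[\lambda r_n,r_n]}|\psi(t)-\psi(r_n)|<\delta/2$. (Equivalently, reduce to real-valued functions by splitting into real and imaginary parts, which is legitimate because $SO(\mR_+)$ is closed under conjugation.) With that adjustment the proof is complete; note also that this reformulation yields the statement directly, without arguing by contradiction, since it shows $|\psi(r)|\le\bigl(\log(1/\lambda)\bigr)^{-1}|\varphi(r)-\varphi(\lambda r)|+\sup_{t\in[\lambda r,r]}|\psi(t)-\psi(r)|\to 0$.
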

\begin{lemma}\label{le:SO-uniform}
Let $a\in SO(\mR_+)$. Suppose continuous functions
$f_j:\mR_+\to\mR_+$ $(j=1,2)$ and ${\cF}:\mR_+\times\mR_+\to\mR_+$
satisfy the relation
\[
xf_1(y)\le{\cF}(x,y)\le xf_2(y), \quad x,y\in\mR_+.
\]
If for some sequence $t=\{t_n\}_{n=1}^\infty$ tending to
$s\in\{0,\infty\}$ the limit
\[
\lim_{n\to\infty}a(t_n)=:a_t
\]
exists, then for every $y\in\mR_+$ the limit
$\lim\limits_{n\to\infty}a({\cF}(t_n,y))$ also exists. Moreover,
\[
\lim_{n\to\infty}a({\cF}(t_n,y))=a_t,
\]
and the convergence is uniform on every segment $J\subset\mR_+$.
\end{lemma}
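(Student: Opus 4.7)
The plan is to reduce everything to a single application of the slow oscillation of $a$ at $s$, by trapping both $t_n$ and every value $\cF(t_n,y)$ with $y$ in a fixed segment inside one interval of the form $[\lambda r_n,r_n]$ with $r_n\to s$ and $\lambda\in(0,1)$ independent of $y$ and $n$.

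First I would exploit the compactness of the segment $J\subset\mR_+$ to get uniform bounds on the data. Since $f_1,f_2$ are continuous and strictly positive on $\mR_+$, there exist constants $0<m\le M<\infty$ with $m\le f_j(y)\le M$ for all $y\in J$ and $j=1,2$. The hypothesis $xf_1(y)\le\cF(x,y)\le xf_2(y)$ then yields
\[
mt_n\le\cF(t_n,y)\le Mt_n\quad\text{for every }y\in J\text{ and every }n.
\]

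Next I would enclose $t_n$ itself together with all the points $\cF(t_n,y)$, $y\in J$, in a common interval. Setting $\mu:=\min(m,1)$ and $L:=\max(M,1)$, both $t_n$ and $\cF(t_n,y)$ lie in $[\mu t_n,Lt_n]$ for all $y\in J$. If $\mu=L$, then $m=M=1$, hence $\cF(t_n,y)=t_n$ and the assertion is trivial; otherwise put $\lambda:=\mu/L\in(0,1)$ and $r_n:=Lt_n$. Then $r_n\to s$ as $n\to\infty$, and $[\mu t_n,Lt_n]=[\lambda r_n,r_n]$, so
\[
t_n\in[\lambda r_n,r_n],\qquad \cF(t_n,y)\in[\lambda r_n,r_n]\text{ for all }y\in J.
\]
Now the defining property of $SO(\mR_+)$, applied with this fixed $\lambda$, gives
\[
\sup_{u,v\in[\lambda r_n,r_n]}|a(u)-a(v)|\longrightarrow0\quad(n\to\infty),
\]
whence $\sup_{y\in J}|a(\cF(t_n,y))-a(t_n)|\to 0$. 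Combining this with $a(t_n)\to a_t$ establishes both the existence of the limit $\lim_n a(\cF(t_n,y))=a_t$ for each $y$ and its uniformity on $J$.

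There is no serious obstacle here; the only delicate point is to choose the enclosing interval $[\lambda r_n,r_n]$ in a way that depends on $J$ but not on the particular $y\in J$, which is precisely what the compactness of $J$ and the crude lower/upper bounds on $f_1,f_2$ deliver. Everything else is a direct invocation of the slow-oscillation definition and the hypothesis $a(t_n)\to a_t$.
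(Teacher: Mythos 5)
Your argument is correct and is essentially the standard one: the paper itself gives no proof here but defers to \cite[Lemma~3.5]{KKL03}, whose argument is precisely this trapping of $t_n$ and all points $\cF(t_n,y)$, $y\in J$, in a common interval $[\lambda r_n,r_n]$ with $r_n\to s$, followed by the slow-oscillation estimate. Your handling of the degenerate case $\mu=L$ and the choice of $\lambda=\mu/L$ independent of $y$ and $n$ are exactly the points that make the argument work, so nothing is missing.
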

\subsection{Properties of slowly oscillating shifts}
In this subsection we list necessary properties of slowly oscillating shifts.
\begin{lemma}[{\cite[Lemma~2.2]{KKLsufficiency}}]
\label{le:exp-repr}
An orientation-preserving non-Carleman shift
$\alpha:\mR_+\to\mR_+$ belongs to $SOS(\mR_+)$ if and only if
\begin{equation}\label{eq:exp-repr-1}
\alpha(t)=te^{\omega (t)},\quad t\in \mR_+,
\end{equation}
for some real-valued function $\omega\in SO(\mR_+)\cap C^1(\mR_+)$ such that
the function $t\mapsto t\omega^\prime(t)$ also belongs to $SO(\mR_+)$ and
$\inf\limits_{t\in\mR_+}\big(1+t\omega'(t)\big)>0$.
\end{lemma}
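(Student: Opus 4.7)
The plan is to use the direct computation, for any $\alpha$ of the form $\alpha(t)=te^{\omega(t)}$ with $\omega\in C^1(\mR_+)$,
\[
\alpha'(t)=e^{\omega(t)}\bigl(1+t\omega'(t)\bigr),\qquad \log\alpha'(t)=\omega(t)+\log\bigl(1+t\omega'(t)\bigr),
\]
and to translate between the hypotheses on $\alpha$ and those on $\omega$ via the $C^*$-algebra structure of $SO(\mR_+)$, which gives $e^{\omega},\log(1+t\omega')\in SO(\mR_+)$ whenever $\omega,t\omega'\in SO(\mR_+)$ and $1+t\omega'$ is bounded away from $0$.

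For the ``if'' direction the argument is essentially algebraic. Given the three conditions on $\omega$, both factors $e^{\omega(t)}$ and $1+t\omega'(t)$ are bounded elements of $SO(\mR_+)$, the second being bounded below by a positive constant; their product $\alpha'$ therefore lies in $SO(\mR_+)$, and $\log\alpha'=\omega+\log(1+t\omega')$ is bounded. In particular $\alpha'>0$, so $\alpha$ is an orientation-preserving $C^1$ map, and the boundedness of $\omega$ together with strict monotonicity shows that $\alpha$ is a diffeomorphism of $\mR_+$ onto itself with $\alpha(t)\asymp t$. Hence $\alpha\in SOS(\mR_+)$.

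For the ``only if'' direction I would proceed as follows. Assuming $\alpha\in SOS(\mR_+)$, the hypothesis $\log\alpha'\in L^\infty(\mR_+)$ furnishes constants $0<c\le M<\infty$ with $c\le\alpha'(t)\le M$. Define $\omega(t):=\log(\alpha(t)/t)$. Since $\alpha:\mR_+\to\mR_+$ is a homeomorphism with $\alpha(0^+)=0$, integration of $\alpha'$ gives $ct\le\alpha(t)\le Mt$, so $\log c\le\omega(t)\le\log M$; continuity and the $C^1$ property are inherited from $\alpha$. Once $\omega\in SO(\mR_+)$ is established, the identities $1+t\omega'(t)=\alpha'(t)e^{-\omega(t)}$ and $t\omega'(t)=\alpha'(t)e^{-\omega(t)}-1$ give $t\omega'\in SO(\mR_+)$ (as a product/difference in the $C^*$-algebra) together with the lower bound $1+t\omega'(t)\ge c/M>0$.

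The main obstacle is verifying that $\omega$ itself is slowly oscillating. The mean value estimate $\omega(t)-\omega(\tau)=\int_\tau^t(\alpha'(u)/\alpha(u)-1/u)\,du$ combined only with the crude bound $|\omega'(u)|\le C/u$ yields an oscillation estimate of order $|\log\lambda|$ on $[\lambda r,r]$, which does not tend to zero as $r\to s\in\{0,\infty\}$; one must exploit the slow oscillation of $\alpha'$ itself. The cleanest way is to change to the logarithmic variable $s=\log t$, under which slow oscillation at $0$ and $\infty$ becomes uniform continuity at $\pm\infty$; writing $\widetilde{\alpha'}(s):=\alpha'(e^s)$ and $\widetilde\omega(s):=\omega(e^s)$, one has the first-order relation $\widetilde\omega'(s)=\widetilde{\alpha'}(s)e^{-\widetilde\omega(s)}-1$, and a compactness/contradiction argument along sequences $s_n\to\pm\infty$ (extracting subsequences along which $\widetilde{\alpha'}$ and $\widetilde\omega$ stabilise on fixed translates of a compact interval) yields the required vanishing of the oscillation. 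This step is of the same flavour as the arguments underlying Lemma~\ref{le:SO-nec} and Lemma~\ref{le:SO-uniform}, i.e.\ the proofs of \cite[Proposition~3.3, Lemma~3.5]{KKL03}.
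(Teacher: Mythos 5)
This paper does not prove Lemma~\ref{le:exp-repr} at all --- it is imported verbatim from \cite[Lemma~2.2]{KKLsufficiency} --- so there is no internal proof to compare against, and your argument has to stand on its own. Structurally it does: the identity $\alpha'(t)=e^{\omega(t)}\bigl(1+t\omega'(t)\bigr)$ reduces everything to the single claim $\omega\in SO(\mR_+)$ in the ``only if'' direction, the ``if'' direction is indeed a routine verification in the $C^*$-algebra $SO(\mR_+)$, and you correctly isolate the one non-trivial point. But that point is exactly where the proposal stops being a proof. Asserting that ``a compactness/contradiction argument along sequences $s_n\to\pm\infty$ \dots yields the required vanishing of the oscillation'' leaves the decisive step unproved: after extracting a subsequence along which $\widetilde{\alpha'}(s_n+\cdot)\to A$ (a positive constant, by slow oscillation and boundedness of $\alpha'$) and $\widetilde\omega(s_n+\cdot)\to W$ uniformly on compact sets, what you hold is a solution $W$ of the autonomous equation $W'=Ae^{-W}-1$ that is bounded on all of $\mR$ (between $\log c$ and $\log M$, where $c=\inf\alpha'$, $M=\sup\alpha'$), and you must still show that the only such solution is the constant $W\equiv\log A$ --- e.g.\ by the phase-line analysis: a non-constant solution with $W>\log A$ grows linearly backward in time, and one with $W<\log A$ reaches $-\infty$ in finite backward time, both contradicting the two-sided bound. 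Without identifying the limit as a constant, the contradiction never materialises. Nor can you lean on Lemma~\ref{le:SO-nec} or Lemma~\ref{le:SO-uniform} here, since both presuppose the very membership $\omega\in SO(\mR_+)$ that is in question.

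There is also a much shorter route you walked past. Since $\alpha(0^+)=0$ and $\alpha'$ is bounded, one has $e^{\omega(t)}=\alpha(t)/t=\frac1t\int_0^t\alpha'(u)\,du$, the Ces\`aro mean of $\alpha'$. For any bounded $f\in SO(\mR_+)$ the mean satisfies $\frac1t\int_0^tf(u)\,du-f(t)\to0$ as $t\to s\in\{0,\infty\}$: split $[0,t]=[0,\lambda^Nt]\cup[\lambda^Nt,t]$, bound the contribution of the first piece by $2\lambda^N\|f\|_{L^\infty}$ and that of the second by $\operatorname{osc}_{[\lambda^Nt,t]}f$, which tends to $0$ by slow oscillation, and then let $N\to\infty$. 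Hence $e^{\omega}\in SO(\mR_+)$; since it takes values in $[c,M]$ with $c>0$, also $\omega=\log e^{\omega}\in SO(\mR_+)$, and the remaining assertions follow from the identities $t\omega'=\alpha'e^{-\omega}-1$ and $1+t\omega'=\alpha'e^{-\omega}\ge c/M$ exactly as you wrote them. Replace the compactness sketch by either this computation or the completed ODE argument.
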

The function $\omega$ in \eqref{eq:exp-repr-1} is referred to as the exponent
function of $\alpha$.
\begin{lemma}[{\cite[Lemma~2.3]{KKLsufficiency}}]
\label{le:continuous-SOS}
Suppose $c\in SO(\mR_+)$ and $\alpha\in SOS(\mR_+)$. Then $c\circ\alpha\in SO(\mR_+)$.
\end{lemma}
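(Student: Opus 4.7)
The plan is to verify directly that $c\circ\alpha$ fulfills the definition of slow oscillation. Boundedness and continuity are immediate: $c\circ\alpha$ is continuous as a composition, and bounded because $c$ is bounded and $\alpha$ maps $\mR_+$ into itself. So only the oscillation condition at $s\in\{0,\infty\}$ remains to be checked.

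First I would reformulate the oscillation condition sequentially: a bounded continuous function $f$ lies in $SO(\mR_+)$ at $s$ if and only if, for every pair of sequences $t_n,\tau_n\to s$ with $t_n/\tau_n$ contained in some fixed compact subinterval of $(0,\infty)$, one has $f(t_n)-f(\tau_n)\to 0$. This is essentially a restatement of the definition: choose $\lambda$ smaller than the uniform lower bound on $\min(t_n/\tau_n,\tau_n/t_n)$, so that eventually both $t_n$ and $\tau_n$ lie in an interval $[\lambda r_n,r_n]$. This sequential form transfers cleanly under composition with $\alpha$.

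The key step uses the exponential representation from Lemma~\ref{le:exp-repr}: $\alpha(t)=te^{\omega(t)}$ with $\omega\in SO(\mR_+)$. Let $t_n,\tau_n\to s$ with $t_n/\tau_n\in[\mu,1/\mu]$ for some $\mu\in(0,1)$. Applying the sequential characterization to $\omega$ gives $\omega(t_n)-\omega(\tau_n)\to 0$, hence
\[
\frac{\alpha(t_n)}{\alpha(\tau_n)}=\frac{t_n}{\tau_n}\,e^{\omega(t_n)-\omega(\tau_n)}
\]
remains in a fixed compact subinterval of $(0,\infty)$ for all large $n$. Moreover, since $\alpha$ is an orientation-preserving homeomorphism of $\mR_+$ whose only fixed points are $0$ and $\infty$, we have $\alpha(t_n),\alpha(\tau_n)\to s$. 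Applying the sequential characterization to $c$ on the image sequences $\{\alpha(t_n)\}$ and $\{\alpha(\tau_n)\}$ then yields $c(\alpha(t_n))-c(\alpha(\tau_n))\to 0$, which is exactly what is needed. I do not anticipate a real obstacle beyond organizing these observations; the whole argument hinges on the uniform control of the ratio $\alpha(t_n)/\alpha(\tau_n)$, which is supplied precisely by the hypothesis $\omega\in SO(\mR_+)$ built into the definition of $SOS(\mR_+)$.
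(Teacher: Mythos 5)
Your proof is correct; the present paper imports this lemma from \cite[Lemma~2.3]{KKLsufficiency} without reproducing a proof, and your direct verification via the sequential reformulation of the oscillation condition (transfer the pair $t_n,\tau_n$ with controlled ratio through $\alpha$ and apply the slow oscillation of $c$ to the image sequences) is essentially the standard argument. One small remark: for the uniform control of the ratio you only need the boundedness of $\omega$, not its slow oscillation, since
\[
\frac{\alpha(t_n)}{\alpha(\tau_n)}=\frac{t_n}{\tau_n}\,e^{\omega(t_n)-\omega(\tau_n)}
\in\big[\mu e^{-2\|\omega\|_\infty},\,\mu^{-1}e^{2\|\omega\|_\infty}\big]
\]
already lies in a fixed compact subinterval of $(0,\infty)$, so the step $\omega(t_n)-\omega(\tau_n)\to 0$ can be dropped.
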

\begin{lemma}[{\cite[Lemma~2.4]{KKLsufficiency}}]
\label{le:SOS-inverse}
If $\alpha\in SOS(\mR_+)$, then $\beta\in SOS(\mR_+)$.
\end{lemma}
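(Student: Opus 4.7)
The plan is to verify the characterization of $SOS(\mR_+)$ provided by Lemma~\ref{le:exp-repr} for $\beta$. Writing $\alpha(t)=te^{\omega(t)}$ with the exponent function $\omega$ supplied by that lemma, I would take as candidate exponent function for $\beta$ the expression $\widetilde{\omega}(s):=-\omega(\beta(s))$, so that $\beta(s)=se^{\widetilde{\omega}(s)}$. It then suffices to verify
(a) $\widetilde{\omega}\in SO(\mR_+)\cap C^1(\mR_+)$;
(b) $s\widetilde{\omega}'(s)\in SO(\mR_+)$;
(c) $\inf_{s\in\mR_+}\bigl(1+s\widetilde{\omega}'(s)\bigr)>0$;
and then to invoke the ``if'' direction of Lemma~\ref{le:exp-repr}.

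A preliminary observation is that $\omega$ itself is bounded on $\mR_+$. Setting $\psi(t):=t\omega'(t)\in SO(\mR_+)$, differentiation of $\alpha(t)=te^{\omega(t)}$ yields $\log\alpha'(t)=\omega(t)+\log(1+\psi(t))$. The left-hand side is bounded by the definition of $SOS(\mR_+)$; on the right, $\psi$ is bounded (being in $SO(\mR_+)$) and $\inf(1+\psi)>0$ by Lemma~\ref{le:exp-repr}, so $\log(1+\psi)$ is bounded, forcing $\omega$ to be bounded. Consequently $\beta(s)/s=e^{-\omega(\beta(s))}$ is bounded and bounded away from zero uniformly in $s\in\mR_+$.

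The central step, and the one I expect to be the main obstacle, is the stability claim: \emph{if $f\in SO(\mR_+)$, then $f\circ\beta\in SO(\mR_+)$.} Note that Lemma~\ref{le:continuous-SOS} only handles composition with $\alpha$, and invoking it for $\beta$ would be circular. I would argue directly from the definition. Fix $\lambda\in(0,1)$ and $s\in\{0,\infty\}$, and choose $M>0$ so that $e^{-M}\le\beta(t)/t\le e^M$ on $\mR_+$. For $u,v\in[\lambda r,r]$, monotonicity of $\beta$ gives $\beta(u),\beta(v)\in[\beta(\lambda r),\beta(r)]$, and
\[
\frac{\beta(\lambda r)}{\beta(r)}=\lambda\cdot\frac{\beta(\lambda r)/(\lambda r)}{\beta(r)/r}\ge\lambda e^{-2M}=:\mu\in(0,1).
\]
Hence $[\beta(\lambda r),\beta(r)]\subseteq[\mu R,R]$ with $R:=\beta(r)$, and $R\to s$ as $r\to s$. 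Since $f\in SO(\mR_+)$, the supremum of $|f(x)-f(y)|$ over $x,y\in[\mu R,R]$ tends to zero, which proves the claim.

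Finally I would assemble (a), (b), (c). Condition (a) follows by applying the stability claim to $f=\omega$, together with the fact that $\widetilde{\omega}\in C^1(\mR_+)$ because $\omega$ and $\beta$ are $C^1$. For (b), a direct computation based on $\beta'(s)=1/\alpha'(\beta(s))$, the identity $\alpha'(t)=e^{\omega(t)}(1+\psi(t))$, and $s=\alpha(\beta(s))$ yields
\[
s\widetilde{\omega}'(s)=-\frac{\psi(\beta(s))}{1+\psi(\beta(s))}.
\]
The stability claim applied to $\psi$ gives $\psi\circ\beta\in SO(\mR_+)$, and composition with the continuous map $x\mapsto x/(1+x)$ (uniformly continuous on the bounded range of $\psi$, which stays bounded away from $-1$ by Lemma~\ref{le:exp-repr}) delivers (b). The same computation gives $1+s\widetilde{\omega}'(s)=1/(1+\psi(\beta(s)))$, and (c) follows from the boundedness of $\psi$. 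Lemma~\ref{le:exp-repr} then concludes $\beta\in SOS(\mR_+)$.
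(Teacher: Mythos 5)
Your proof is correct and complete. The paper itself offers no argument for this lemma --- it simply imports it as Lemma~2.4 of \cite{KKLsufficiency} --- so there is nothing internal to compare against; what you supply is a self-contained derivation via the exponent-function characterization of Lemma~\ref{le:exp-repr}, which is exactly the natural route. The one genuinely delicate point is the stability claim $f\in SO(\mR_+)\Rightarrow f\circ\beta\in SO(\mR_+)$: invoking Lemma~\ref{le:continuous-SOS} here would indeed be circular, and you correctly avoid this by proving the claim directly from the two facts that $\beta$ is monotone and that $\beta(t)/t$ is uniformly bounded above and away from zero (which you obtain from the boundedness of $\omega$ alone, without assuming $\beta\in SOS(\mR_+)$). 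The remaining computations --- $\widetilde{\omega}=-\omega\circ\beta$, $s\widetilde{\omega}'(s)=-\psi(\beta(s))/(1+\psi(\beta(s)))$, and $1+s\widetilde{\omega}'(s)=1/(1+\psi(\beta(s)))$ --- are all correct, and conditions (a)--(c) of Lemma~\ref{le:exp-repr} follow as you state; one should only add the trivial observation that $\beta$ is again an orientation-preserving diffeomorphism fixing exactly $0$ and $\infty$, which the characterization presupposes.
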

\begin{lemma}\label{le:SOS-derivative}
Suppose $\alpha\in SOS(\mR_+)$ and $\omega$ is the exponent function of $\alpha$.
Then $\alpha'(\xi)=e^{\omega(\xi)}$ for all $\xi\in\Delta$.
\end{lemma}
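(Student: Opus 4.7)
The plan is to combine the explicit expression for $\alpha'$ coming from Lemma~\ref{le:exp-repr} with the vanishing result of Lemma~\ref{le:SO-nec} and the fiber description of Lemma~\ref{le:SO-fundamental-property}.

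First I would differentiate the representation $\alpha(t)=te^{\omega(t)}$ from Lemma~\ref{le:exp-repr} to obtain
\[
\alpha'(t)=e^{\omega(t)}\bigl(1+t\omega'(t)\bigr),\qquad t\in\mR_+.
\]
Lemma~\ref{le:exp-repr} guarantees that $\omega\in SO(\mR_+)\cap C^1(\mR_+)$ and that $\psi(t):=t\omega'(t)$ lies in $SO(\mR_+)$, so both factors on the right are elements of the $C^*$-algebra $SO(\mR_+)$, and hence so is $\alpha'$ (which was already assumed in the definition of $SOS(\mR_+)$).

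Next, I would apply Lemma~\ref{le:SO-nec} to $\varphi=\omega$ and $\psi(t)=t\omega'(t)$; this is exactly its hypothesis and yields
\[
\lim_{t\to s}t\omega'(t)=0\quad\text{for}\quad s\in\{0,\infty\}.
\]
Now fix $\xi\in\Delta$ and let $s\in\{0,\infty\}$ be the point with $\xi\in M_s(SO(\mR_+))$. By Lemma~\ref{le:SO-fundamental-property} applied to the countable family $\{\omega,\psi\}\subset SO(\mR_+)$, there exists a sequence $\{t_n\}\subset\mR_+$ with $t_n\to s$ such that $\omega(\xi)=\lim_{n\to\infty}\omega(t_n)$ and $\psi(\xi)=\lim_{n\to\infty}t_n\omega'(t_n)$. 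The second limit is $0$ by the previous step, so $\psi(\xi)=0$.

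Finally, since $\xi$ is a multiplicative linear functional on the commutative $C^*$-algebra $SO(\mR_+)$ and preserves continuous functional calculus of self-adjoint (real-valued) elements, we obtain
\[
\alpha'(\xi)=\xi(e^{\omega})\bigl(1+\xi(\psi)\bigr)=e^{\omega(\xi)}(1+0)=e^{\omega(\xi)},
\]
which is the claim. There is no real obstacle here: the proof is essentially a bookkeeping exercise combining the three preceding lemmas, and the only point deserving a brief justification is why $\xi(e^{\omega})=e^{\omega(\xi)}$, which follows from the fact that characters of a commutative $C^*$-algebra intertwine continuous functional calculus of their real-valued elements.
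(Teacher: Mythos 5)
Your proposal is correct and follows essentially the same route as the paper: differentiate $\alpha(t)=te^{\omega(t)}$, use Lemma~\ref{le:SO-nec} to kill the term $t\omega'(t)$ at $0$ and $\infty$, and realize $\xi$ by a sequence via Lemma~\ref{le:SO-fundamental-property}. The only cosmetic difference is that the paper puts $\alpha'$ itself into the countable family and passes to the limit along the sequence (so $\lim e^{\omega(t_n)}=e^{\omega(\xi)}$ is just continuity of $\exp$), whereas you invoke the character/functional-calculus identity $\xi(e^{\omega})=e^{\omega(\xi)}$ directly; both are fine.
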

\begin{proof}
By Lemma~\ref{le:exp-repr}, $\alpha(\tau)=\tau e^{\omega(\tau)}$ for $\tau\in\mR_+$
with $\omega\in SO(\mR_+)\cap C^1(\mR_+)$ and $\psi(\tau)=\tau\omega'(\tau)\in SO(\mR_+)$.
From Lemma~\ref{le:SO-nec} it follows that
\begin{equation}\label{eq:SOS-derivative-1}
\lim_{\tau\to s}\tau\omega'(\tau)
=0
\quad\mbox{for}\quad s\in\{0,\infty\}.
\end{equation}
Fix $s\in\{0,\infty\}$. By Lemma~\ref{le:SO-fundamental-property}, for a given
$\xi\in M_s(SO(\mR_+))$ there exists a sequence $t_n\to s$ as $n\to\infty$
such that
\begin{equation}\label{eq:SOS-derivative-2}
\omega(\xi) =\lim_{n\to\infty}\omega(t_n),
\quad
\alpha'(\xi)=\lim_{n\to\infty}\alpha'(t_n)
\end{equation}
(recall that $\alpha'\in SO(\mR_+)$).
Clearly, $\alpha'(\tau)=(1+t\omega'(\tau))e^{\omega(\tau)}$ for $\tau\in\mR_+$.
Combining this relation with \eqref{eq:SOS-derivative-1}--\eqref{eq:SOS-derivative-2},
we get $\alpha'(\xi)=e^{\omega(\xi)}$.
\QED
\end{proof}
\section{Convolution operators}\label{sec:Mellin-convolution}
\subsection{Fourier convolution operators}
For a Banach space $X$, let $\cB(X)$ be the Banach algebra of all bounded linear
operators on $X$ and let $\cK(X)$ be the closed two-sided ideal of all
compact operators on $X$.

Let $F:L^2(\mR)\to L^2(\mR)$ denote the Fourier transform,
\[
(Ff)(x):=\int_\mR f(y)e^{-ixy}dy\quad (x\in\mR),
\]
and let $F^{-1}:L^2(\mR)\to L^2(\mR)$ be the inverse of $F$. A function
$a\in L^\infty(\mR)$ is called a Fourier multiplier if the map
$f\mapsto F^{-1}aFf$ maps $L^2(\mR)\cap L^p(\mR)$ onto itself and extends
to a bounded operator on $L^p(\mR)$. The latter operator is then denoted by
$W^0(a)$. We let $M_p(\mR)$ stand for the set of all Fourier multipliers on
$L^p(\mR)$. One can show that $M_p(\mR)$ is a Banach algebra under the norm
\[
\|a\|_{M_p(\mR)}:=\|W^0(a)\|_{\cB(L^p(\mR))}.
\]
\subsection{Mellin convolution operators}
Let $d\mu(t)=dt/t$ be the (normalized) invariant measure on $\mR_+$.
Consider the Fourier transform on $L^2(\mathbb{R}_+,d\mu)$, which is
usually referred to as the Mellin transform and is defined by
\[
M:L^2(\mR_+,d\mu)\to L^2(\mR),
\quad
(Mf)(x)=\int_{\mR_+} f(t) t^{-ix}\,\frac{dt}{t}.
\]
It is an invertible operator, with inverse given by
\[
{M^{-1}}:L^2(\mR)\to L^2(\mR_{+},d\mu),
\quad
({M^{-1}}g)(t)= \frac{1}{2\pi}\int_{\mR}
g(x)t^{ix}\,dx.
\]
Let $E$ be the isometric isomorphism
\begin{equation}\label{eq:def-E}
E:L^p(\mR_+,d\mu)\to L^p(\mR),
\quad
(Ef)(x):=f(e^x)\quad (x\in\mR).
\end{equation}
Then the map $A\mapsto E^{-1}AE$ transforms the Fourier convolution
operator given by $W^0(a)=F^{-1}aF$ to the Mellin convolution operator
\[
\operatorname{Co}(a):=M^{-1}aM
\]
with the same symbol $a$. Hence the class of Fourier multipliers on
$L^p(\mR)$ coincides with the class of Mellin multipliers on $L^p(\mR_+,d\mu)$.
\subsection{Piecewise continuous multipliers}
We denote by $PC$ the $C^*$-algebra of all bounded piecewise continuous
functions on $\dot{\mR}ª=\mR\cup\{\infty\}$. By definition, $a\in PC$ if
and only if $a\in L^\infty(\mR)$ and the one-sided limits
\[
a(x_0-0):=\lim_{x\to x_0-0}a(x),
\quad
a(x_0+0):=\lim_{x\to x_0+0}a(x)
\]
exist for each $x_0\in\dot{\mR}$. If a function $a$ is given everywhere on $\mR$, then
its total variation of $a$ is defined by
\[
V(a):=\sup\sum_{k=1}^n|a(x_k)-a(x_{k-1})|,
\]
where the supremum is taken over all $n\in\mN$ and
\[
-\infty<x_0<x_1<\dots<x_n<+\infty.
\]
If $a$ has a finite total variation, then it has finite one-sided limits
$a(x-0)$ and $a(x+0)$ for all $x\in\dot{\mR}$, that is, $a\in PC$.
If $a$ is an absolutely continuous function of finite total variation on $\mR$,
then $a'\in L^1(\mR)$ and
\[
V(a)=\int_\mR|a'(x)|dx
\]
(see, e.g., \cite[Chap. VIII, Sections 3 and 9; Chap. XI, Section~4]{N55}).

The following theorem gives an important subset of $M_p(\mR)$.
Its proof can be found, e.g., in \cite[Theorem~17.1]{BKS02}.
\begin{theorem}[Stechkin's inequality]
\label{th:Stechkin}
If $a\in PC$ has finite total variation $V(a)$, then $a\in M_p(\mR)$ and
\[
\|a\|_{M_p(\mR)}\le\|S_\mR\|_{\cB(L^p(\mR))}\big(\|a\|_{L^\infty(\mR)}+V(a)\big),
\]
where $S_\mR$ is the Cauchy singular integral operator on $\mR$.
\end{theorem}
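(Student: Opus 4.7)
The plan is to reduce the inequality to a norm estimate for characteristic functions of half-lines $\chi_{[s,\infty)}$ on the Fourier side, which can then be controlled directly by the norm of $S_\mR$ on $L^p(\mR)$. I would proceed in three steps: first estimate $\|\chi_{[s,\infty)}\|_{M_p(\mR)}$; then establish the inequality for absolutely continuous $a$ via an integral representation and Minkowski's integral inequality; finally extend to general $a\in PC$ with $V(a)<\infty$ by mollification.

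For the first step, recall that $S_\mR$ is the Fourier multiplier with symbol $-i\operatorname{sgn}$ (up to a sign convention), so $\operatorname{sgn}$ itself lies in $M_p(\mR)$ with $\|\operatorname{sgn}\|_{M_p(\mR)}=\|S_\mR\|_{\cB(L^p(\mR))}$. Since $\chi_{[0,\infty)}=(1+\operatorname{sgn})/2$ almost everywhere, this gives
\[
\|\chi_{[0,\infty)}\|_{M_p(\mR)}\le\tfrac{1}{2}\bigl(1+\|S_\mR\|_{\cB(L^p(\mR))}\bigr)\le\|S_\mR\|_{\cB(L^p(\mR))},
\]
where the second inequality uses $\|S_\mR\|_{\cB(L^p(\mR))}\ge 1$ (a consequence of $S_\mR^2=I$). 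Translation of a symbol corresponds to modulation on the function side, an isometry of $L^p(\mR)$, so $\|\chi_{[s,\infty)}\|_{M_p(\mR)}=\|\chi_{[0,\infty)}\|_{M_p(\mR)}\le\|S_\mR\|_{\cB(L^p(\mR))}$ for every $s\in\mR$.

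For the second step, suppose $a$ is absolutely continuous with $V(a)<\infty$. Then $a'\in L^1(\mR)$ by the quoted fact from Natanson, the limit $a(-\infty):=\lim_{x\to-\infty}a(x)$ exists with $|a(-\infty)|\le\|a\|_{L^\infty(\mR)}$, and
\[
a(x)=a(-\infty)+\int_\mR a'(s)\chi_{[s,\infty)}(x)\,ds.
\]
For $f$ in the dense subspace $L^2(\mR)\cap L^p(\mR)$, a Fubini argument justifies interchanging $F^{-1}$ with the integral, yielding $W^0(a)f=a(-\infty)f+\int_\mR a'(s)W^0(\chi_{[s,\infty)})f\,ds$. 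Minkowski's integral inequality, combined with Step~1 and $V(a)=\int_\mR|a'(s)|\,ds$, gives
\[
\|W^0(a)f\|_{L^p(\mR)}\le\bigl(|a(-\infty)|+\|S_\mR\|_{\cB(L^p(\mR))}V(a)\bigr)\|f\|_{L^p(\mR)},
\]
and a further use of $\|S_\mR\|_{\cB(L^p(\mR))}\ge 1$ to absorb $|a(-\infty)|\le\|a\|_{L^\infty(\mR)}$ yields the claimed estimate.

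The main obstacle is the third step: extending the inequality to general $a\in PC$ with $V(a)<\infty$, which may carry countably many jumps and even a singular-continuous part. I would choose a standard smooth approximate identity $\phi_\eps$ and set $a_n:=a*\phi_{1/n}$. Then $a_n\in C^\infty(\mR)$, $\|a_n\|_{L^\infty(\mR)}\le\|a\|_{L^\infty(\mR)}$, and $V(a_n)\le V(a)$ (convolution with a probability density does not increase total variation, as one sees by writing the distributional derivative of $a_n$ as $\phi_{1/n}$ convolved with the finite signed measure $da$ and applying Fubini). Moreover $a_n\to a$ at every continuity point of $a$, hence almost everywhere and boundedly. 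Step~2 then gives $\|W^0(a_n)\|_{\cB(L^p(\mR))}\le\|S_\mR\|_{\cB(L^p(\mR))}(\|a\|_{L^\infty(\mR)}+V(a))$. For $f\in L^2(\mR)\cap L^p(\mR)$, dominated convergence on the Fourier side yields $W^0(a_n)f\to W^0(a)f$ in $L^2(\mR)$; passing to an almost everywhere convergent subsequence and applying Fatou's lemma gives $\|W^0(a)f\|_{L^p(\mR)}\le\|S_\mR\|_{\cB(L^p(\mR))}(\|a\|_{L^\infty(\mR)}+V(a))\|f\|_{L^p(\mR)}$. Density of $L^2(\mR)\cap L^p(\mR)$ in $L^p(\mR)$ then extends $W^0(a)$ uniquely to a bounded operator on $L^p(\mR)$ with the required norm bound.
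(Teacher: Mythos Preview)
The paper does not actually prove this theorem: it merely states the result and refers to \cite[Theorem~17.1]{BKS02} for a proof. So there is no in-paper argument to compare against. Your three-step proof (half-line multiplier estimate via $\chi_{[0,\infty)}=(1+\operatorname{sgn})/2$ and $\|S_\mR\|\ge 1$; integral representation plus Minkowski for absolutely continuous symbols; mollification and Fatou for general $a\in PC$ of bounded variation) is correct and is precisely the classical route to Stechkin's inequality, which is what the cited reference carries out.
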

According to \cite[p.~325]{BKS02}, let $PC_p$ be the closure in $M_p(\mR)$
of the set of all functions $a\in PC$ with finite total variation on $\mR$.
Following \cite[p.~331]{BKS02}, put
\[
C_p(\overline{\mR}):=PC_p\cap C(\mR),\quad \overline{\mR}:=[-\infty,+\infty].
\]
\subsection{Algebra generated by the Cauchy singular integral operator}
Suppose $\mathfrak{A}$ is a Banach algebra and $\mathfrak{S}$ is a subset of
$\mathfrak{A}$. Let $\alg_\mathfrak{A}\mathfrak{S}$ denote the smallest
closed subalgebra of  $\mathfrak{A}$ containing $\mathfrak{S}$ and
let $\operatorname{id}_\mathfrak{A}\mathfrak{S}$ denote the smallest closed
two-sided ideal of $\mathfrak{A}$ containing $\mathfrak{S}$.

Let $\cB:=\cB(L^p(\mR_+))$, $\cK:=\cK(L^p(\mR_+))$, and $\cA:=\alg_\cB\{I,S\}$.
Consider the isometric isomorphism
\begin{equation}\label{eq:def-Phi}
\Phi:L^p(\mR_+)\to L^p(\mR_+,d\mu),
\quad
(\Phi f)(t):=t^{1/p}f(t)\quad(t\in\mR_+).
\end{equation}
The following fact is well known (see, e.g., \cite[Section~2]{RS90}).
\begin{theorem}\label{th:algebra-A}
The algebra $\cA$ is the smallest closed subalgebra of $\cB$ that contains
the operators $\Phi^{-1}\operatorname{Co}(a)\Phi$ with $a\in C_p(\overline{\mR})$.
The functions
\[
s_p(x):=\coth[\pi(x+i/p)]
\quad
r_p(x):=1/\sinh[\pi(x+i/p)]
\quad(x\in\mR)
\]
belong to $C_p(\overline{\mR})$ and the operators $S$ and $R$ are similar to
the Mellin convolution operators:
\[
\Phi S\Phi^{-1}=\operatorname{Co}(s_p),
\quad
\Phi R\Phi^{-1}=\operatorname{Co}(r_p).
\]
\end{theorem}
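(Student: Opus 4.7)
The plan is to split the theorem into three claims: (a) computation of explicit Mellin symbols for $S$ and $R$, (b) verification that these symbols lie in $C_p(\overline{\mR})$, and (c) identification of $\cA$ as the algebra of Mellin convolutions with $C_p(\overline{\mR})$-symbols.

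For the similarity relations $\Phi S\Phi^{-1}=\operatorname{Co}(s_p)$ and $\Phi R\Phi^{-1}=\operatorname{Co}(r_p)$, I would first pass via $\Phi$ from \eqref{eq:def-Phi} to $L^p(\mR_+,d\mu)$, and then via the isometry $E$ from \eqref{eq:def-E} to $L^p(\mR)$. Under the substitution $t=e^x$, $\tau=e^y$, the Cauchy kernel becomes essentially $e^{(x-y)/p}/(e^x-e^y)$; its Fourier transform is computed by a standard contour integration, closing in a half-plane and summing residues coming from the poles of $1/(1-e^z)$ at $z=2\pi ik$, which produces $\coth[\pi(\cdot+i/p)]=s_p$. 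The same contour technique applied to the kernel $1/(\tau+t)$, transformed into $e^{(x-y)/p}/(e^x+e^y)$ and using $1/(1+e^z)$ with poles at $z=i\pi(2k+1)$, yields $1/\sinh[\pi(\cdot+i/p)]=r_p$.

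To show $s_p,r_p\in C_p(\overline{\mR})$, I would observe that both functions are smooth on $\mR$ and extend continuously to $\overline{\mR}$ (with $s_p(\pm\infty)=\mp 1$ and $r_p(\pm\infty)=0$). Their derivatives decay exponentially at $\pm\infty$ and are therefore integrable, giving $V(s_p),V(r_p)<\infty$. Since these functions are themselves in $PC$ with finite total variation, they belong trivially to the $M_p$-closure $PC_p$, and combined with their continuity on $\overline{\mR}$ this places them in $PC_p\cap C(\mR)=C_p(\overline{\mR})$. Stechkin's inequality (Theorem~\ref{th:Stechkin}) additionally provides explicit bounds on their multiplier norms.

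For the algebra identification, the inclusion $\cA\subset\clos\{\Phi^{-1}\operatorname{Co}(a)\Phi:a\in C_p(\overline{\mR})\}$ is immediate, since $I=\Phi^{-1}\operatorname{Co}(1)\Phi$ and $S=\Phi^{-1}\operatorname{Co}(s_p)\Phi$ and $\cA$ is by definition the smallest closed subalgebra of $\cB$ containing $I$ and $S$. The reverse inclusion reduces, via the continuous homomorphism $a\mapsto\Phi^{-1}\operatorname{Co}(a)\Phi$ from $M_p(\mR)$ into $\cB$, to showing that the closed subalgebra of $M_p(\mR)$ generated by $1$ and $s_p$ is all of $C_p(\overline{\mR})$. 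The main obstacle is that the multiplier norm strictly dominates the sup norm, so a naive Stone--Weierstrass argument in $C(\overline{\mR})$ is not sufficient; one must promote uniform approximation to $M_p$-approximation while controlling total variation. I would handle this by first approximating an arbitrary $a\in C_p(\overline{\mR})$ by smooth functions of bounded total variation (using the very definition of $C_p$ as the $M_p$-closure of such functions), and then approximating each smooth function by rational functions in $s_p$ via a partial-fraction or resolvent-type decomposition whose error in $M_p$ is controlled through Stechkin's inequality in terms of sup norm and total variation simultaneously.
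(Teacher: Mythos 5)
The paper does not actually prove this statement: it is quoted as ``well known'' with a pointer to \cite[Section~2]{RS90}, so there is no internal argument to compare against, and your sketch is a reconstruction of the standard proof from the literature. Its architecture is sound: the residue computation of the Mellin symbols of $S$ and $R$, the verification that $s_p,r_p$ are continuous with finite total variation and hence lie in $PC_p\cap C(\mR)$, the observation that $a\mapsto\Phi^{-1}\operatorname{Co}(a)\Phi$ is an isometric homomorphism so that the whole theorem reduces to $\alg_{M_p(\mR)}\{1,s_p\}=C_p(\overline{\mR})$, and the use of Stechkin's inequality (Theorem~\ref{th:Stechkin}) to upgrade ``uniform plus total variation'' convergence to $M_p$-convergence — this is exactly how the result is proved in \cite{RS90} and \cite{BKS02}. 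Three small points. First, $s_p(\pm\infty)=\pm1$, not $\mp1$ (compare the proof of Lemma~\ref{le:LO-modulations}, where $s_p(x+\mu_n)\to1$ as $\mu_n\to+\infty$); this is harmless for membership in $C_p(\overline{\mR})$, since elements of $PC_p\cap C(\mR)$ need not have equal limits at $+\infty$ and $-\infty$. Second, $C_p(\overline{\mR})$ is \emph{defined} as $PC_p\cap C(\mR)$, not as the $M_p$-closure of continuous functions of bounded variation, so your first approximation step tacitly uses the (true, but not definitional) fact that these two descriptions coincide; a mollification argument, which does not increase total variation, supplies this. Third, the final ``partial-fraction or resolvent'' step is better done with plain polynomials in $s_p$: since $s_p$ is a homeomorphism of $\overline{\mR}$ onto an arc $\Gamma$ and smooth functions on $\Gamma$ can be approximated by polynomials together with their first derivatives, while $\int_\mR|s_p'(x)|\,dx<\infty$, one gets simultaneous control of the sup norm and the total variation and hence, by Stechkin, of the $M_p$ norm — resolvents $(s_p-\lambda)^{-1}$ would require knowing in advance that they lie in the generated algebra. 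With these details supplied the argument is complete.
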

From $s_p^2-r_p^2=1$ and Theorem~\ref{th:algebra-A} it follows that
\begin{equation}\label{eq:S-R-relation}
4P_+P_-=4P_-P_+=I-S^2=-R^2.
\end{equation}
\begin{theorem}[{\cite[Corollary~6.4]{KKLsufficiency}}]
\label{th:compactness-commutators}
If $a\in SO(\mR_+)$ and $\alpha\in SOS(\mR_+)$, then for every $A\in\cA$
the operators $aA-AaI$ and $W_\alpha A-AW_\alpha$ are compact.
\end{theorem}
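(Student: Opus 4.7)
The plan is to use a Leibniz-rule argument to reduce both compactness claims to the single case $A = S$, and then verify each commutator via Mellin conjugation. Fix $a \in SO(\mR_+)$ and $\alpha \in SOS(\mR_+)$, and set
$$
\cJ_a := \{A \in \cB : aA - AaI \in \cK\},
\qquad
\cJ_\alpha := \{A \in \cB : W_\alpha A - AW_\alpha \in \cK\}.
$$
Each is a closed linear subspace of $\cB$ (since $\cK$ is closed and left/right multiplication by $aI$ or $W_\alpha$ is bounded) and each is closed under composition by the Leibniz-type identity
$$
a(A_1A_2) - (A_1A_2)aI = (aA_1 - A_1 aI)A_2 + A_1(aA_2 - A_2 aI),
$$
and the analogous one for $W_\alpha$. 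Hence $\cJ_a$ and $\cJ_\alpha$ are closed unital subalgebras of $\cB$, so once $S \in \cJ_a \cap \cJ_\alpha$ is established, the whole algebra $\cA = \alg_\cB\{I,S\}$ sits inside $\cJ_a \cap \cJ_\alpha$.

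For $S \in \cJ_a$, conjugate by the isometric isomorphism $E\Phi : L^p(\mR_+) \to L^p(\mR)$ from \eqref{eq:def-Phi} and \eqref{eq:def-E}. Multiplication by $a$ passes to multiplication by $\tilde a(x) := a(e^x)$, and by Theorem~\ref{th:algebra-A} the operator $S$ passes to the Fourier convolution $W^0(s_p)$ with $s_p \in C_p(\overline{\mR})$. Thus the compactness of $aS - SaI$ is equivalent to that of the commutator of a multiplication operator by a bounded continuous function slowly oscillating at $\pm\infty$ with $W^0(s_p)$ on $L^p(\mR)$. By Stechkin's inequality (Theorem~\ref{th:Stechkin}) and density one reduces to Fourier multipliers of finite total variation, for which the compactness of the $SO$–$PC$ commutator is a classical kernel computation that exploits the vanishing oscillation of $\tilde a$ at $\pm\infty$.

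For $S \in \cJ_\alpha$, multiply the commutator on the right by $W_\alpha^{-1} = W_\beta$ (bounded by Lemma~\ref{le:SOS-inverse}), reducing to the compactness of $T := W_\alpha S W_\beta - S$. A direct computation, using the substitution $u = \beta(\tau)$ inside the integral defining $SW_\beta$, identifies $T$ as the integral operator with kernel
$$
K(t,u) = \frac{1}{\pi i}\left(\frac{\alpha'(u)}{\alpha(u) - \alpha(t)} - \frac{1}{u - t}\right)
= \frac{1}{\pi i}\cdot\frac{\int_t^u \bigl(\alpha'(u) - \alpha'(s)\bigr)\,ds}{(\alpha(u) - \alpha(t))(u - t)}.
$$
Writing $\alpha(t) = te^{\omega(t)}$ (Lemma~\ref{le:exp-repr}), the boundedness of $\log\alpha'$ gives $|\alpha(u) - \alpha(t)| \asymp |u - t|$, so $|K(t,u)| \lesssim |u-t|^{-2}\int_t^u |\alpha'(u) - \alpha'(s)|\,ds$. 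By Lemma~\ref{le:SO-nec}, $t\omega'(t)\to 0$ as $t \to 0, \infty$, and so the slow oscillation of $\alpha' = (1 + t\omega'(t))e^{\omega(t)}$ makes $|\alpha'(u) - \alpha'(s)|$ uniformly small when $s$ and $u$ lie in the same bounded scale near $0$ or $\infty$. Partitioning $\mR_+\times\mR_+$ into a neighborhood of the diagonal (where the mean-value bound controls $K$) and its complement (where $K$ decays at $0$ and $\infty$), and approximating by Hilbert–Schmidt truncations on compact subsets, yields the compactness of $T$. This last step is the main obstacle: translating the merely qualitative slow-oscillation data of $\omega$ and $\alpha'$ into a quantitative kernel estimate strong enough to produce $L^p$-compactness through the truncation/approximation scheme. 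By contrast, the multiplier commutator $[aI,S]$ is essentially a classical fact once the Mellin conjugation to $L^p(\mR)$ has been made.
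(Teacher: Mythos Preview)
The paper does not prove this statement; it is quoted verbatim from \cite[Corollary~6.4]{KKLsufficiency}, so there is no in-paper argument to compare against. In that reference the result is obtained through a Mellin pseudodifferential calculus: the operators $aI$, $S$, and $W_\alpha$ are all realized as Mellin PDOs with slowly oscillating symbols, and the compactness of their commutators with elements of $\cA$ falls out of the symbolic product formula (the PDO of a product symbol differs from the product of PDOs by a compact operator).

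Your Leibniz-rule reduction to the single generator $A=S$ is correct and efficient, and for $[aI,S]$ the Mellin conjugation to $L^p(\mR)$ followed by an appeal to the standard $SO$--$PC$ commutator result is the right move. The genuine gap is in your treatment of $W_\alpha S-SW_\alpha$. The kernel formula for $T=W_\alpha SW_\beta-S$ is correct, but the compactness sketch does not close under the stated hypotheses. From Lemma~\ref{le:exp-repr} you know only that $\alpha'=(1+t\omega'(t))e^{\omega(t)}$ with $\omega\in C^1$ and $t\omega'(t)\in SO(\mR_+)$; nothing forces $\alpha'$ to be Lipschitz or even H\"older on compact subintervals of $\mR_+$. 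Consequently the estimate
\[
|K(t,u)|\ \lesssim\ \frac{1}{|u-t|}\sup_{s\in[t,u]}|\alpha'(u)-\alpha'(s)|
\]
does not make $K$ bounded, nor even locally square-integrable, near the diagonal, so the ``Hilbert--Schmidt truncations on compact subsets'' step is unavailable. Off the diagonal you face a second difficulty: slow oscillation of $\alpha'$ controls differences only on the multiplicative scale $[\lambda r,r]$, not on the absolute scale your bound requires. You flag this step yourself as ``the main obstacle,'' and it is: converting the purely qualitative $SO$ hypothesis on $\alpha'$ into operator-norm smallness of the tails of $T$ needs a different mechanism---either the PDO calculus of \cite{KKLsufficiency}, or a Mellin-side argument that first replaces $W_\alpha$ by a Mellin convolution (as in Lemma~\ref{le:mult-shift-convolution}) modulo a compact correction and then handles the remaining commutator within $C_p(\overline{\mR})$. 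As written, the proof that $S\in\cJ_\alpha$ is incomplete.
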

\subsection{Semi-almost periodic multipliers}
The following simple statement motivates us to enlarge the class of
piecewise continuous multipliers.
\begin{lemma}\label{le:mult-shift-convolution}
Let $\alpha:\mR_+\to\mR_+$ be a multiplicative shift given by $\alpha(t)=kt$
for all $t\in\mR_+$ with some $k\in\mR_+$. Then
$\Phi W_\alpha \Phi^{-1}=\operatorname{Co}(m)$ with $m(x):=e^{i(x+i/p)\log k}$
for $x\in\mR$.
\end{lemma}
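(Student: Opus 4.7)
The plan is a direct computation on both sides and a density argument. First I would compute the left-hand side explicitly: for $g\in L^p(\mR_+,d\mu)$ and $t\in\mR_+$,
\[
(\Phi W_\alpha \Phi^{-1}g)(t) = t^{1/p}(\Phi^{-1}g)(kt) = t^{1/p}(kt)^{-1/p}g(kt) = k^{-1/p}g(kt),
\]
using only the definitions of $\Phi$ in \eqref{eq:def-Phi}, its inverse, and $(W_\alpha f)(t)=f(\alpha(t))=f(kt)$. So $\Phi W_\alpha\Phi^{-1}$ acts as $g\mapsto k^{-1/p}g(k\cdot)$ on $L^p(\mR_+,d\mu)$, which is clearly a bounded operator.

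Next I would simplify the symbol: $m(x)=e^{i(x+i/p)\log k}=k^{-1/p}k^{ix}$. The task reduces to showing that $\operatorname{Co}(m)$ is exactly the dilation operator $g\mapsto k^{-1/p}g(k\cdot)$. Working on the dense subspace $L^2(\mR_+,d\mu)\cap L^p(\mR_+,d\mu)$, where the Mellin transform $M$ is well-defined, I would compute
\[
(M^{-1}(mMg))(t)
= \frac{k^{-1/p}}{2\pi}\int_\mR (Mg)(x) k^{ix}t^{ix}\,dx
= k^{-1/p}(M^{-1}Mg)(kt)
= k^{-1/p}g(kt),
\]
using Fubini/inversion on the intersection where everything is classical. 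Thus the identity $\Phi W_\alpha\Phi^{-1}=\operatorname{Co}(m)$ holds on this dense subspace.

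Finally I would argue that $m\in M_p(\mR)$ and the identity extends by continuity to all of $L^p(\mR_+,d\mu)$. The cleanest way is actually the reverse: since the left-hand side $g\mapsto k^{-1/p}g(k\cdot)$ is manifestly bounded on $L^p(\mR_+,d\mu)$ (it is just a weighted dilation, preserving the Haar measure $d\mu$ up to a constant), and coincides with $M^{-1}mM$ on $L^2\cap L^p$, the operator $M^{-1}mM$ extends to a bounded operator on $L^p(\mR_+,d\mu)$. Transporting back through $E$ as in Section~\ref{sec:Mellin-convolution} shows $m$ is a Fourier multiplier on $L^p(\mR)$, and the notational identity $\operatorname{Co}(m)=M^{-1}mM$ (interpreted as the bounded extension) holds.

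The only mild obstacle is being careful about the $L^p$ meaning of $\operatorname{Co}(m)$: the Mellin transform is defined on $L^2$, so $\operatorname{Co}(m)$ on $L^p$ must be understood as the continuous extension from $L^2\cap L^p$, which is precisely what the multiplier formalism provides. The computation itself is essentially one line once the symbol $m$ is unwound.
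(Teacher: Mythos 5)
Your computation is correct and is exactly the ``direct calculation'' that the paper's proof consists of: unwinding $\Phi W_\alpha\Phi^{-1}$ to the weighted dilation $g\mapsto k^{-1/p}g(k\cdot)$, identifying $m(x)=k^{-1/p}k^{ix}$, matching the two via the Mellin inversion formula on a dense subspace, and extending by boundedness (note the dilation is in fact an isometry of $L^p(\mR_+,d\mu)$, since $d\mu=dt/t$ is dilation-invariant). Nothing is missing.
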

\begin{proof}
The proof is a matter of a direct calculation.
\QED
\end{proof}
A function $p:\mR\to\mC$ of the form
$p(x)=\sum_{\lambda\in\Omega} r_\lambda e^{i\lambda x}$,
where $r_\lambda\in\mC$, $\lambda\in\mR$, and $\Omega$ is a finite
subset of $\mR$, is called an almost periodic polynomial.
The set of all almost periodic polynomials is denoted by $AP_0$.
From Lemma~\ref{le:mult-shift-convolution} it follows that $AP_0\subset M_p(\mR)$.
According to \cite[p.~372]{BKS02}, $AP_p$ denotes the closure of the set
of all almost periodic polynomials in the norm of $M_p(\mR)$ and $SAP_p$ denotes
the smallest closed subalgebra of $M_p(\mR)$ that contains $C_p(\overline{\mR})$
and $AP_p$.

Applying the inverse closedness of the algebra $SAP_p$ in $L^\infty(\mR)$
(see \cite[Proposition~19.4]{BKS02}), we immediately get the following.
\begin{theorem}\label{th:invertibility-convolution}
Suppose $a\in SAP_p$. The Mellin convolution operator
$\operatorname{Co}(a)$ is invertible on the space $L^p(\mR_+,d\mu)$
if and only if $\inf\limits_{x\in\mR}|a(x)|>0$.
\end{theorem}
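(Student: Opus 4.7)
My plan is to handle the two implications separately; the backward direction is where the inverse closedness of $SAP_p$ in $L^\infty(\mR)$ does the essential work, while the forward direction is a standard translation-invariance argument.

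For the sufficient direction, assume $\delta:=\inf_{x\in\mR}|a(x)|>0$. Since $a\in SAP_p\subset L^\infty(\mR)$ and $|a|\ge\delta$, the function $1/a$ is a well-defined element of $L^\infty(\mR)$. By the inverse closedness of $SAP_p$ in $L^\infty(\mR)$ (\cite[Proposition~19.4]{BKS02}) we conclude $1/a\in SAP_p\subset M_p(\mR)$. Because $\operatorname{Co}$ is a unital algebra homomorphism from $M_p(\mR)$ into $\cB(L^p(\mR_+,d\mu))$, we immediately obtain
\[
\operatorname{Co}(a)\operatorname{Co}(1/a)=\operatorname{Co}(1/a)\operatorname{Co}(a)=\operatorname{Co}(1)=I,
\]
so $\operatorname{Co}(a)$ is invertible with inverse $\operatorname{Co}(1/a)$.

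For the necessary direction, suppose $\operatorname{Co}(a)$ is invertible on $L^p(\mR_+,d\mu)$. Conjugating by the isometric isomorphism $E$ from \eqref{eq:def-E}, this is equivalent to invertibility of $W^0(a)=F^{-1}aF$ on $L^p(\mR)$. Because the multiplication operator $a\cdot$ commutes with every modulation on $L^2$, the operator $W^0(a)$ commutes with every translation $T_y$ on $L^p(\mR)$, and therefore so does its inverse. By the classical characterization of translation-invariant bounded operators on $L^p(\mR)$, that inverse has the form $W^0(b)$ for some $b\in M_p(\mR)\subset L^\infty(\mR)$. The identity $W^0(a)W^0(b)=I$ forces $ab=1$ almost everywhere on $\mR$, so $|a(x)|\ge 1/\|b\|_{L^\infty(\mR)}$ almost everywhere. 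Since every element of $SAP_p$ is continuous on $\mR$ (both $C_p(\overline{\mR})$ and $AP_p$ consist of continuous functions, and the $M_p$-norm dominates the $L^\infty$-norm), essential infimum and infimum of $|a|$ coincide, yielding $\inf_{x\in\mR}|a(x)|>0$.

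The only nontrivial ingredient beyond the cited inverse closedness is the translation-invariance characterization used in the necessary direction; it is entirely standard, so no genuine obstacle is expected, in line with the author's remark that the result follows \emph{immediately}.
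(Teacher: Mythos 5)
Your proof is correct and takes essentially the same route the paper intends: the paper's one-line proof merely cites the inverse closedness of $SAP_p$ in $L^\infty(\mR)$, and you supply the standard details behind that citation (the multiplier calculus $\operatorname{Co}(a)\operatorname{Co}(1/a)=I$ for sufficiency, and the translation-invariance characterization of bounded operators on $L^p(\mR)$ together with the continuity of $SAP_p$ functions for necessity). No gaps.
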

\section{Limit operators}\label{sec:LO}
\subsection{Abstract approach}
In our previous work \cite{KKL03} the techniques of limit operators
(see, e.g., \cite{BKR00,L06,RRS04}) was successfully used to
study the invertibility of binomial functional operators that are
now the coefficients of the singular integral operator with shift
$N$ given by \eqref{eq:def-N}. In what follows we make use of such
techniques to obtain a necessary condition for the Fredholmness of
the operator $N$. Let us recall the abstract version of such
techniques.

Let $X$ be a Banach space and let $X^*$ be its dual space.
We say that an operator $U\in\cB(X)$ is a \textit{pseudoisometry} if
$U$ is invertible in $\cB(X)$ and
\[
\|U\|_{\cB(X)}=1/\|U^{-1}\|_{\cB(X)}.
\]
Let $A\in\cB(X)$ and $\cU=\{U_n\}_{n=1}^\infty$ be a sequence of
pseudoisometries. If the strong limits
\begin{equation}\label{eq:LO-defi}
\begin{split}
A_\cU :=\operatornamewithlimits{s-lim}_{n\to\infty}
(U_n^{-1}AU_n)\;\mbox{ in }\; \cB(X),
\quad
A_{\cU^*} :=\operatornamewithlimits{s-lim}_{n\to\infty}
(U_n^{-1}AU_n)^* \;\mbox{ in }\;
\cB(X^*)
\end{split}
\end{equation}
exist, then always $(A_\cU)^*=A_{{\cU}^*}$, and we will refer the
operator $A_\cU$ to as a {\it limit operator} for the operator $A$
with respect to the sequence $\cU$. Note that usually the
limit operator $A_\cU$ is defined independently of the existence
of the strong limit $A_{{\cU}^*}$ (see, e.g., \cite{BKR00,RRS04}),
while we need the existence of the both limits
\eqref{eq:LO-defi} for our purposes. If the limit operator $A_\cU$
exists, then it is uniquely determined by $A$ and $\cU$, which
justifies the notation $A_\cU$.

In the next statement we collect basic properties of limit operators.
\begin{lemma}\label{le:LO-properties}
Suppose $\cU=\{U_n\}_{n=1}^\infty\subset\cB(X)$ is a sequence of pseudo\-isometries.
\begin{enumerate}
\item[{\rm(a)}]
If $A\in\cB(X)$ and $A_\cU$ exists, then $\|A_\cU\|_{\cB(X)}\le\|A\|_{\cB(X)}$.

\item[{\rm(b)}]
If $A,B\in\cB(X)$, $\alpha\in\mC$, and if the limit operators $A_\cU$, $B_\cU$
exist, then the limit operators $(\alpha A)_\cU$, $(A+B)_\cU$, $(AB)_\cU$
also exist and
\[
(\alpha A)_\cU=\alpha A_\cU, \quad
(A+B)_\cU=A_\cU+B_\cU, \quad
(AB)_\cU=A_\cU B_\cU.
\]

\item[{\rm (c)}]
If $A\in\cB(X)$ and if $\{A_m\}_{m=1}^\infty\subset\cB(X)$ is such that
the limit operators $(A_m)_\cU$ exist for all $m\in\mN$ and $\|A-A_m\|_{\cB(X)}\to 0$
as $m\to\infty$, then the limit operator $A_\cU$ exists and $\|A_\cU-(A_m)_\cU\|_{\cB(X)}\to 0$
as $m\to \infty$.
\end{enumerate}
\end{lemma}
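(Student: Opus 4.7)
The three parts are routine verifications once one exploits the pseudoisometry identity $\|U_n^{-1}\|\,\|U_n\|=1$ together with elementary bookkeeping for strong operator limits. I plan to dispatch them in the order stated, since (a) supplies the uniform boundedness that drives both (b) and (c).

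For (a), observe that $\|U_n^{-1}AU_n\|_{\cB(X)}\le\|U_n^{-1}\|\,\|A\|_{\cB(X)}\,\|U_n\|=\|A\|_{\cB(X)}$ for every $n$, so for each $x\in X$ the continuity of the norm yields
$$\|A_\cU x\|=\lim_{n\to\infty}\|U_n^{-1}AU_nx\|\le\|A\|_{\cB(X)}\|x\|,$$
from which the bound $\|A_\cU\|_{\cB(X)}\le\|A\|_{\cB(X)}$ follows.

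For (b), the assertions for $\alpha A$ and $A+B$ reduce at once to the linearity of strong limits in $\cB(X)$ and in $\cB(X^*)$, both of which are provided by hypothesis. For the product I use the telescoping
$$U_n^{-1}ABU_n-A_\cU B_\cU=(U_n^{-1}AU_n)(U_n^{-1}BU_n-B_\cU)+(U_n^{-1}AU_n-A_\cU)B_\cU.$$
Applied to any $x\in X$, the second summand tends to zero by the strong convergence of $U_n^{-1}AU_n$ on the vector $B_\cU x$, while the first does so because $U_n^{-1}BU_nx\to B_\cU x$ in norm and $\|U_n^{-1}AU_n\|\le\|A\|$ is uniformly bounded by (a). The analogous telescoping for the adjoints $(U_n^{-1}ABU_n)^*=(U_n^{-1}BU_n)^*(U_n^{-1}AU_n)^*$, using the hypothesis that $(U_n^{-1}AU_n)^*\to A_{\cU^*}$ and $(U_n^{-1}BU_n)^*\to B_{\cU^*}$ strongly in $\cB(X^*)$, shows that the dual strong limit for $AB$ also exists. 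Hence $(AB)_\cU$ exists and equals $A_\cU B_\cU$.

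For (c), first apply (a) and (b) to the differences $A_m-A_k$ to conclude
$$\|(A_m)_\cU-(A_k)_\cU\|_{\cB(X)}=\|(A_m-A_k)_\cU\|_{\cB(X)}\le\|A_m-A_k\|_{\cB(X)},$$
so $\{(A_m)_\cU\}$ is Cauchy in $\cB(X)$; denote its norm limit by $B$, and let $C\in\cB(X^*)$ be the corresponding limit of $\{(A_m)_{\cU^*}\}$. For $x\in X$ and any $m\in\mN$ the triangle inequality
$$\|U_n^{-1}AU_nx-Bx\|\le\|A-A_m\|\,\|x\|+\|U_n^{-1}A_mU_nx-(A_m)_\cU x\|+\|(A_m)_\cU-B\|\,\|x\|$$
permits a standard three-$\eps$ argument: choose $m$ large enough to control the first and third terms, then let $n\to\infty$ to kill the middle one. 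The identical estimate applied on $X^*$ shows $(U_n^{-1}AU_n)^*\to C$ strongly, whence $A_\cU$ exists, $(A_\cU)^*=C$, and $\|A_\cU-(A_m)_\cU\|_{\cB(X)}\to 0$. No genuine obstacle arises; the only point demanding a bit of care is the parallel bookkeeping on the dual side in (b) and (c), since the definition in \eqref{eq:LO-defi} insists on existence of both strong limits and the proof must supply them simultaneously.
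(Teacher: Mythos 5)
Your proof is correct and complete; the paper itself does not prove this lemma but refers the reader to \cite[Proposition~3.4]{L06} and \cite[Proposition~1.2.2]{RRS04}, whose arguments are essentially the ones you give (uniform bound from the pseudoisometry identity, the telescoping decomposition for products, and the Cauchy/three-$\eps$ argument for norm limits). You also correctly attend to the one feature specific to this paper's definition, namely that the dual strong limits in \eqref{eq:LO-defi} must be produced in parallel.
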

The proofs of the above results can be found in \cite[Proposition~3.4]{L06}
or \cite[Proposition~1.2.2]{RRS04}.
\begin{theorem}\label{th:inv-quotient-algebra}
Let $X$ be a Banach space, let $\fA$ be a closed subalgebra of $\cB(X)$,
and let $\fJ$ be a closed two-sided ideal of $\fA$. Suppose $A\in\fA$
and $\cU=\{U_n\}_{n=1}^\infty\subset\cB(X)$ is a sequence of pseudoisometries such that
the limit operator $A_\cU$ exists and the limit operators $J_\cU$ exist
and are equal to zero for all $J\in\fJ$. If the coset $A+\fJ$ is invertible
in the quotient algebra $\fA/\fJ$, then the limit operator $A_\cU$ is
invertible.
\end{theorem}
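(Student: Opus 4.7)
The plan is to exhibit a regularizer in $\fA$ and conjugate it by the sequence $\cU$. Since the coset $A+\fJ$ is invertible in $\fA/\fJ$, there exist $B\in\fA$ and $J_1,J_2\in\fJ$ with $AB=I+J_1$ and $BA=I+J_2$. Writing $A_n:=U_n^{-1}AU_n$ and $B_n:=U_n^{-1}BU_n$, the key observation is that even though the limit operator $B_\cU$ need not exist, the pseudoisometry assumption yields the uniform bound
\[
\|B_n\|_{\cB(X)}\le \|U_n^{-1}\|\,\|B\|\,\|U_n\|=\|B\|.
\]

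Conjugating $BA=I+J_2$ by $U_n$ gives $B_nA_n=I+U_n^{-1}J_2U_n$, and by hypothesis $U_n^{-1}J_2U_n\to 0$ strongly, so $B_nA_n\to I$ strongly. To deduce that $A_\cU$ is bounded below I fix $x\in X$ and decompose
\[
B_nA_\cU x=B_nA_n x-B_n(A_n x-A_\cU x).
\]
The second term tends to zero since $\|B_n\|\le\|B\|$ and $A_n x\to A_\cU x$, while the first converges to $x$. Therefore $B_nA_\cU x\to x$, and combined with $\|B_nA_\cU x\|\le\|B\|\,\|A_\cU x\|$ this yields the estimate $\|x\|\le\|B\|\,\|A_\cU x\|$. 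In particular, $A_\cU$ is injective with closed range.

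For density of the range I would transport the same argument to $X^*$. The sequence $V_n:=(U_n^{-1})^*$ is again a sequence of pseudoisometries, and a direct computation gives $(U_n^{-1}AU_n)^*=V_n^{-1}A^*V_n$, so $A_{\cU^*}=(A_\cU)^*$ is precisely the limit operator of $A^*$ with respect to $\{V_n\}$. Since $J_\cU=0$ entails $J_{\cU^*}=(J_\cU)^*=0$ for every $J\in\fJ$, applying the previous paragraph to the left regularizer relation $B^*A^*=I+J_1^*$, together with the bound $\|V_n^{-1}B^*V_n\|\le\|B\|$, shows that $(A_\cU)^*$ is bounded below. Hence $\ker(A_\cU)^*=\{0\}$, so $A_\cU$ has dense range in $X$.

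Combining the two estimates, $A_\cU$ is injective with closed and dense range, hence a bounded bijection of $X$, and the open mapping theorem delivers $(A_\cU)^{-1}\in\cB(X)$. The main conceptual obstacle is that $B_\cU$ need not exist; the plan sidesteps this by exploiting only the uniform bound $\|B_n\|\le\|B\|$ supplied by the pseudoisometry hypothesis, both on $X$ and on $X^*$.
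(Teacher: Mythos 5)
Your proof is correct and follows essentially the same route as the paper, which simply refers to the standard limit-operator argument of Rabinovich--Roch--Silbermann (Proposition 1.2.9): conjugate a regularizer $B$ of $A$ modulo $\fJ$, use the pseudoisometry bound $\|U_n^{-1}BU_n\|\le\|B\|$ in place of the (possibly nonexistent) limit operator $B_\cU$, and derive lower bounds for $A_\cU$ on $X$ and for $(A_\cU)^*$ on $X^*$. Your handling of the two subtleties --- that only the uniform bound on $B_n$ is needed, and that the adjoint half of the definition of limit operator supplies exactly what is required for the dense-range step --- matches the intended argument.
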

The proof is developed by analogy with \cite[Proposition~1.2.9]{RRS04}.
\subsection{Strong convergence of shift operators}
To calculate limit operators for the shift operator $W_\alpha$, we
need a result on the strong convergence of shift operators.
\begin{lemma}\label{le:strong-shift}
Let $\alpha_n:\mR_+\to\mR_+$ for $n\in\mN\cup\{0\}$ be
orientation-preserving diffeomorphisms having only two fixed points $0$ and
$\infty$, and $\beta_n$ be their inverses. If
$\log\alpha_n'\in L^\infty(\mR_+)$ for all $n\in\mN\cup\{0\}$ and
\begin{itemize}
\item[{\rm (i)}]
$\displaystyle\sup_{n\in\mN\cup\{0\}}\|\beta_n'\|_{L^\infty(\mR_+)}<\infty,$

\item[{\rm (ii)}]
$\alpha_n\to\alpha_0$ pointwise on $\mR_+$ as $n\to\infty$;
\end{itemize}
then the sequence of shift operators $W_{\alpha_n}\in\cB$
converges strongly to the shift operator $W_{\alpha_0}\in\cB$.
\end{lemma}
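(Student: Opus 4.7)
The plan is to establish the standard three-ingredient argument for strong convergence of a uniformly bounded sequence of operators: uniform norm bound, convergence on a dense subset via dominated convergence, and an $\varepsilon/3$-type extension.

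First, I would record the norm estimate. Changing variables $s=\alpha_n(t)$ gives, for any $f\in L^p(\mR_+)$,
\[
\|W_{\alpha_n}f\|_{L^p(\mR_+)}^p
=\int_{\mR_+}|f(\alpha_n(t))|^p\,dt
=\int_{\mR_+}|f(s)|^p\beta_n'(s)\,ds
\le\|\beta_n'\|_{L^\infty(\mR_+)}\|f\|_{L^p(\mR_+)}^p,
\]
so that $\|W_{\alpha_n}\|_{\cB}\le\|\beta_n'\|_{L^\infty(\mR_+)}^{1/p}$. Hypothesis (i) then yields $M:=\sup_{n\in\mN\cup\{0\}}\|W_{\alpha_n}\|_{\cB}<\infty$.

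Second, I would upgrade the pointwise convergence $\alpha_n\to\alpha_0$ to pointwise convergence $\beta_n\to\beta_0$. Fixing $y\in\mR_+$ and writing $t_0:=\beta_0(y)$, the uniform Lipschitz bound on $\beta_n$ yields
\[
|\beta_n(y)-t_0|
=|\beta_n(y)-\beta_n(\alpha_n(t_0))|
\le\|\beta_n'\|_\infty\,|\alpha_0(t_0)-\alpha_n(t_0)|\to 0.
\]

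Third, I would verify strong convergence on the dense subset $C_c(\mR_+)\subset L^p(\mR_+)$. For $f\in C_c(\mR_+)$ with $\operatorname{supp}f\subset[a,b]\subset\mR_+$, the support of $W_{\alpha_n}f$ is contained in $[\beta_n(a),\beta_n(b)]$. The pointwise convergence $\beta_n\to\beta_0$ gives some compact interval $[A,B]\subset\mR_+$ containing $[\beta_n(a),\beta_n(b)]$ for all $n$ large enough and $[\beta_0(a),\beta_0(b)]$ as well. Hence for such $n$,
\[
|f(\alpha_n(t))-f(\alpha_0(t))|^p
\le(2\|f\|_{L^\infty(\mR_+)})^p\chi_{[A,B]}(t),
\]
and the right-hand side is integrable on $\mR_+$. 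The integrand converges to $0$ for every $t\in\mR_+$ by the continuity of $f$ together with $\alpha_n(t)\to\alpha_0(t)$, so the dominated convergence theorem gives $\|W_{\alpha_n}f-W_{\alpha_0}f\|_{L^p(\mR_+)}\to 0$.

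Finally, given an arbitrary $f\in L^p(\mR_+)$ and $\varepsilon>0$, I would approximate $f$ by some $g\in C_c(\mR_+)$ with $\|f-g\|_{L^p(\mR_+)}<\varepsilon$ and use
\[
\|W_{\alpha_n}f-W_{\alpha_0}f\|_{L^p(\mR_+)}
\le 2M\varepsilon+\|W_{\alpha_n}g-W_{\alpha_0}g\|_{L^p(\mR_+)},
\]
which yields the conclusion by letting first $n\to\infty$ and then $\varepsilon\to0$. The only slightly delicate point is the third step: one has to notice that, although a continuous compactly supported function does not yield a single dominating function on $\mR_+$, the uniform Lipschitz control on the $\beta_n$ forces the supports of all $W_{\alpha_n}f$ to sit inside a common compact subinterval of $\mR_+$, which is exactly what makes dominated convergence applicable. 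Everything else is bookkeeping.
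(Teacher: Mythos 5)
Your proof is correct and follows essentially the same route as the paper: uniform boundedness of the $W_{\alpha_n}$ from (i), convergence on a dense subset, and the key estimate $|\beta_n(y)-\beta_0(y)|\le\|\beta_n'\|_{L^\infty}|\alpha_0(\beta_0(y))-\alpha_n(\beta_0(y))|$ transferring the pointwise convergence from $\alpha_n$ to $\beta_n$, which is exactly the paper's inequality. The only cosmetic difference is the choice of dense subset: the paper uses the span of the indicators $\chi_{[0,\tau]}$, for which $\|W_{\alpha_n}\chi_{[0,\tau]}-W_{\alpha_0}\chi_{[0,\tau]}\|_{L^p}^p=|\beta_n(\tau)-\beta_0(\tau)|$ exactly, avoiding your dominated-convergence step on $C_c(\mR_+)$.
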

\begin{proof}
The idea of the proof is borrowed from \cite[Theorem~1]{DS98}.
Let $\chi_E$ denote the characteristic function of a set $E\subset\mR_+$.
Since the linear span of the set $\{\chi_{[0,\tau]}:\tau\in\mR_+\}$ is
dense in the space $L^p(\mR_+)$ and the operators $W_{\alpha_n}$ are
uniformly bounded on $L^p(\mR_+)$ in view of (i), it is sufficient to prove that
\begin{equation}\label{eq:strong-shift-1}
\lim_{n\to\infty}
\big\|W_{\alpha_n}\chi_{[0,\tau]}-W_{\alpha_0}\chi_{[0,\tau]}\big\|_{L^p(\mR_+)}=0
\quad\text{for all}\quad\tau\in\mR_+.
\end{equation}
It is easy to see that
\begin{align}
\big\|W_{\alpha_n}\chi_{[0,\tau]} &-W_{\alpha_0}\chi_{[0,\tau]}\big\|_{L^p(\mR_+)}^p
=
\int_{\mR_+}\big|\chi_{[0,\tau]}(\alpha_n(t))-\chi_{[0,\tau]}(\alpha_0(t))\big|^p\,dt
\nonumber
\\
&=
\int_{\mR_+}\big|\chi_{[0,\beta_n(\tau)]}(t)-\chi_{[0,\beta_0(\tau)]}(t)\big|^p\,dt
=
\big|\beta_n(\tau)-\beta_0(\tau)\big|.
\label{eq:strong-shift-2}
\end{align}
On the other hand,
\begin{align}
\big|\beta_n(\tau)-\beta_0(\tau)\big|
&=
\Big|
\beta_n \big[\alpha_0(\beta_0(\tau))\big]-\beta_n\big[\alpha_n(\beta_0(\tau))\big]\Big|
\nonumber
\\
&\le
\sup_{n\in\mN}\big\|\beta_n'\big\|_{L^\infty(\mR_+)}
\big|\alpha_0(\beta_0(\tau))-\alpha_n(\beta_0(\tau))\big|.
\label{eq:strong-shift-3}
\end{align}
From \eqref{eq:strong-shift-3} and the hypotheses of the lemma it follows that
\[
|\beta_n(\tau)-\beta_0(\tau)|=o(1)\quad\mbox{as}\quad n\to\infty
\]
for every $\tau\in\mR_+$. Combining this with \eqref{eq:strong-shift-2}, we arrive
at \eqref{eq:strong-shift-1}.
\QED
\end{proof}
\subsection{Realization with dilations}
For $x\in\mR_+$, consider the dilation
operator $V_x$ defined on $L^p(\mR_+)$ by
\[
(V_x f)(t):=f(t/x)\quad (t\in\mR_+).
\]
It is easy to see that $V_x$ is invertible on the space $L^p(\mR_+)$ and
$V_x^{-1}=V_{1/x}$. Moreover, $\|V_x\|_{\cB}=x^{1/p}$ and hence $V_x$ is
a pseudoisometry for every $x\in\mR_+$.

Fix $s\in\{0,\infty\}$. We say that a sequence $h:=\{h_n\}_{n=1}^\infty\subset\mR_+$
is a test sequence relative to the point $s$ if
\[
\lim_{n\to\infty}h_n=s.
\]
With each test sequence $h$ relative to the point $s$ we associate
the sequence of pseudoisometries $\cV_h^s:=\{V_{h_n}\}_{n=1}^\infty\subset\cB$.
\begin{lemma}\label{le:LO-compact-dilations}
Let $h:=\{h_n\}_{n=1}^\infty\subset\mR_+$ be a test sequence relative to
$s\in\{0,\infty\}$. For any operator $K\in\cK$, the limit operator
$K_{\cV_h^s}$ with respect to the sequence of pseudoisometries
$\cV_h^s:=\{V_{h_n}\}_{n=1}^\infty\subset\cB$ exists and is the zero operator.
\end{lemma}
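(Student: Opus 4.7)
The plan is to exploit the fact that finite-rank operators are norm-dense in $\cK$ (since $L^p(\mR_+)$, $1<p<\infty$, is reflexive and has the approximation property), together with Lemma~\ref{le:LO-properties}, parts (b) and (c). Once one shows that rank-one operators have zero as their limit operator, Lemma~\ref{le:LO-properties}(b) extends this to every finite-rank operator, and Lemma~\ref{le:LO-properties}(c) extends it from finite-rank operators to all of $\cK$. So the heart of the matter is a direct computation for a single rank-one operator $K u=\langle u,\psi\rangle\,\phi$ with $\phi\in L^p(\mR_+)$ and $\psi\in L^q(\mR_+)$.

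For such a $K$, I would compute $V_{h_n}^{-1}KV_{h_n}u$ explicitly. Using $(V_{h_n}u)(t)=u(t/h_n)$ and the change of variable $\tau=h_n s$, one gets
\[
\big(V_{h_n}^{-1}KV_{h_n}u\big)(t)
=\phi(h_nt)\int_{\mR_+}u(\tau/h_n)\,\psi(\tau)\,d\tau .
\]
Therefore $\|V_{h_n}^{-1}KV_{h_n}u\|_{L^p(\mR_+)}=h_n^{-1/p}\|\phi\|_{L^p(\mR_+)}\,\big|\int_{\mR_+}u(\tau/h_n)\,\psi(\tau)\,d\tau\big|$. Taking $u=\chi_{[a,b]}$ with $0<a<b<\infty$ (whose linear combinations are dense in $L^p(\mR_+)$), Hölder's inequality yields
\[
\Big|\int_{\mR_+}u(\tau/h_n)\,\psi(\tau)\,d\tau\Big|
\le h_n^{1/p}\,\|u\|_{L^p(\mR_+)}\,\|\psi\,\chi_{[ah_n,bh_n]}\|_{L^q(\mR_+)} ,
\]
so that $\|V_{h_n}^{-1}KV_{h_n}u\|_{L^p(\mR_+)}\le\|\phi\|_{L^p(\mR_+)}\|u\|_{L^p(\mR_+)}\|\psi\,\chi_{[ah_n,bh_n]}\|_{L^q(\mR_+)}$. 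Because $h_n\to s\in\{0,\infty\}$, the interval $[ah_n,bh_n]$ is pushed entirely to $0$ or to $\infty$, so by the absolute continuity of the Lebesgue integral (dominated convergence with majorant $|\psi|^q$) the last norm tends to zero. Uniform boundedness $\|V_{h_n}^{-1}KV_{h_n}\|_{\cB}=\|K\|_{\cB}$ together with density of step functions in $L^p(\mR_+)$ then gives $V_{h_n}^{-1}KV_{h_n}\to 0$ strongly.

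For the adjoint strong limit required by \eqref{eq:LO-defi}, I would observe that $K^*\in\cK(L^q(\mR_+))$ is again rank one, of the same form with $\phi$ and $\psi$ interchanged, and $(V_{h_n}^{-1}KV_{h_n})^*=V_{h_n}^{*}K^*(V_{h_n}^{-1})^{*}$ is conjugation by the pseudoisometries $(V_{h_n}^{-1})^{*}\in\cB(L^q(\mR_+))$, which act as dilations on $L^q(\mR_+)$ (up to a scalar absorbed into the pseudoisometry constant). The same computation, now with $p$ and $q$ exchanged, shows strong convergence to zero in $\cB(L^q(\mR_+))$.

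The main obstacle is purely organizational: one must be careful that the density/approximation step uses the $\cB$-norm bound of Lemma~\ref{le:LO-properties}(a) so that the uniform bound $\|V_{h_n}^{-1}KV_{h_n}\|_{\cB}\le\|K\|_{\cB}$ lets a $3\eps$-argument transport strong convergence from the dense set of step functions (or Schwartz functions on $\mR_+$) to arbitrary $u\in L^p(\mR_+)$, and similarly for the adjoints. No deeper ingredient is needed because the dilation structure makes the rank-one conjugate completely explicit.
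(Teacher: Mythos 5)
Your proof is correct, but it takes a genuinely different route from the paper's. The paper does not argue with rank-one operators at all: it conjugates by the isometric isomorphism $E\Phi:L^p(\mR_+)\to L^p(\mR)$, under which the dilations $V_{h_n}$ become (weighted) translations $\widetilde{V}_{h_n}$ on $L^p(\mR)$, and then simply cites the known fact (Lemma~18.9 of B\"ottcher--Karlovich--Spitkovsky) that $\widetilde{V}_{h_n}^{-1}\widetilde{K}\widetilde{V}_{h_n}\to 0$ strongly for compact $\widetilde{K}$ when the translation parameters tend to $\pm\infty$; the adjoint limit is handled by the same citation after observing, exactly as you do, that $(V_{h_n}^{-1}KV_{h_n})^*=V_{h_n}^{-1}K^*V_{h_n}$ on $L^q(\mR_+)$. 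Your argument is self-contained: the explicit computation for a rank-one $K$, the H\"older estimate $\|V_{h_n}^{-1}KV_{h_n}u\|_{L^p}\le\|\phi\|_{L^p}\|u\|_{L^p}\|\psi\chi_{[ah_n,bh_n]}\|_{L^q}$ on indicator functions, and the escape of the intervals $[ah_n,bh_n]$ to $0$ or $\infty$ are all correct, and the passage from rank-one to finite-rank to all of $\cK$ via Lemma~\ref{le:LO-properties}(b)--(c) is legitimate (it uses that finite-rank operators are norm-dense in $\cK(L^p(\mR_+))$, which holds since $L^p$ has the approximation property). What the paper's approach buys is brevity and the avoidance of the approximation-property step by outsourcing the compactness argument to a standard reference; what yours buys is a completely elementary, reference-free proof whose only nontrivial input is density of step functions and the uniform bound $\|V_{h_n}^{-1}KV_{h_n}\|_{\cB}\le\|K\|_{\cB}$ needed for the $3\eps$ transfer. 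Both establish the two strong limits required by the paper's definition \eqref{eq:LO-defi} of a limit operator.
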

\begin{proof}
Consider the isometric isomorphism $E\Phi:L^p(\mR_+)\to L^p(\mR)$,
where $E$ is defined by \eqref{eq:def-E} and $\Phi$ is defined by
\eqref{eq:def-Phi}. Then $\widetilde{K}:=E\Phi K\Phi^{-1}E^{-1}$
is compact on $L^p(\mR)$ for every $K\in\cK(L^p(\mR_+))$, and for
every $x\in\mR_+$, $E\Phi V_x\Phi^{-1}E^{-1}=\widetilde{V}_x$, where
$\widetilde{V}_x\in\cB(L^p(\mR))$ and $(\widetilde{V}_xf)(y)=x^{1/p}f(y-\log x)$
for all $y\in\mR$. By \cite[Lemma~18.9]{BKS02},
$\operatornamewithlimits{s-lim}\limits_{n\to\infty}\widetilde{V}_{h_n}^{-1}
\widetilde{K}\widetilde{V}_{h_n}I=0$ on $L^p(\mR)$ for every test sequence
$h=\{h_n\}_{n=1}^\infty\subset\mR_+$. Therefore
\[
\operatornamewithlimits{s-lim}_{n\to\infty}
V_{h_n}^{-1}KV_{h_n}=
\operatornamewithlimits{s-lim}_{n\to\infty}
\Phi^{-1}E^{-1}\widetilde{V}_{h_n}^{-1}\widetilde{K}\widetilde{V}_{h_n}E\Phi=0
\quad\mbox{on}\quad L^p(\mR_+).
\]
Analogously, $(V_{h_n}^{-1}KV_{h_n})^*=V_{h_n}^{-1}K^*V_{h_n}$
converges strongly to zero on the space $L^q(\mR_+)$. Thus,
the limit operator $K_{\cV_h^s}$ exists and is equal to zero.
\QED
\end{proof}
\begin{lemma}\label{le:LO-dilations}
Suppose $a,b,c,d\in SO(\mR_+)$, $\alpha\in SOS(\mR_+)$, and the operator
$N$ is given by \eqref{eq:def-N}. Let $s\in\{0,\infty\}$. For every
functional $\xi\in M_s(SO(\mR_+))$ there exists a test sequence
$h^\xi=\{h_n^\xi\}_{n=1}^\infty\subset\mR_+$ relative to the point
$s$ such that the limit operator $N_{\cV_{h^\xi}^s}$ with respect
to the sequence of pseudoisometries
$\cV_{h^\xi}^s:=\{V_{h_n^\xi}\}_{n=1}^\infty\subset\cB$ exists and
\begin{equation}\label{eq:LO-dilations-1}
N_{\cV_{h^\xi}^s}=
\big(a(\xi)I-b(\xi)W_{\alpha_\xi}\big)P_+ +
\big(c(\xi)I-d(\xi)W_{\alpha_\xi}\big)P_-,
\end{equation}
where
$\alpha_\xi(t):=e^{\omega(\xi)}t$ and  $\omega(t):=\log[\alpha(t)/t]$
for $t\in\mR_+$.
\end{lemma}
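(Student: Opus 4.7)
The plan is to construct a test sequence via Lemma~\ref{le:SO-fundamental-property}, decompose $N$ into building blocks whose dilation limit operators can be computed individually, and recombine them by Lemma~\ref{le:LO-properties}(b).

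First I would apply Lemma~\ref{le:SO-fundamental-property} to the countable family $\{a,b,c,d,\omega,\alpha',\beta'\}\subset SO(\mR_+)$ (note that $\omega\in SO(\mR_+)$ by Lemma~\ref{le:exp-repr}, $\alpha'\in SO(\mR_+)$ by the definition of $SOS(\mR_+)$, and $\beta'\in SO(\mR_+)$ via Lemma~\ref{le:SOS-inverse}), obtaining $h^\xi=\{h_n^\xi\}$ with $h_n^\xi\to s$ and $f(h_n^\xi)\to f(\xi)$ for every $f$ in that list. Lemma~\ref{le:SO-uniform} applied with $\cF(x,y)=xy$ then upgrades this to $f(h_n^\xi t)\to f(\xi)$ uniformly on compact subsets of $\mR_+$.

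Next I would handle the elementary building blocks. A direct change of variables shows that $V_{h_n^\xi}^{-1}(gI)V_{h_n^\xi}$ is multiplication by $t\mapsto g(h_n^\xi t)$, and that $V_x^{-1}SV_x=S$ for every $x\in\mR_+$. Dominated convergence on $L^p(\mR_+)$ (and the analogous argument on $L^q(\mR_+)$ for the adjoints) then gives $(gI)_{\cV_{h^\xi}^s}=g(\xi)I$ for each $g\in\{a,b,c,d\}$ and $S_{\cV_{h^\xi}^s}=S$, hence $(P_\pm)_{\cV_{h^\xi}^s}=P_\pm$.

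The main technical step is the limit operator of $W_\alpha$. I would compute $V_{h_n^\xi}^{-1}W_\alpha V_{h_n^\xi}=W_{\alpha_n}$ with $\alpha_n(t)=t\exp(\omega(h_n^\xi t))$, so that $\alpha_n\to\alpha_\xi$ pointwise by Step~1. The inverses satisfy $\beta_n(\tau)=\beta(h_n^\xi\tau)/h_n^\xi$ and therefore $\beta_n'(\tau)=\beta'(h_n^\xi\tau)$, yielding $\sup_n\|\beta_n'\|_{L^\infty(\mR_+)}=\|\beta'\|_{L^\infty(\mR_+)}<\infty$; Lemma~\ref{le:strong-shift} then delivers $W_{\alpha_n}\to W_{\alpha_\xi}$ strongly on $L^p(\mR_+)$. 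For the dual strong limit in~\eqref{eq:LO-defi}, I would use the identity $(W_{\alpha_n})^*=(\beta_n' I)W_{\beta_n}$ on $L^q(\mR_+)$ and repeat the argument: the multiplication factor converges strongly as in Step~2, while $W_{\beta_n}\to W_{\beta_\xi}$ strongly on $L^q(\mR_+)$ by another application of Lemma~\ref{le:strong-shift}, whose hypotheses follow after interchanging the roles of $\alpha$ and $\beta$. Assembling the pieces via Lemma~\ref{le:LO-properties}(b) then gives~\eqref{eq:LO-dilations-1}. The hardest part will be the dual strong limit, since one must simultaneously handle a multiplication and a shift factor on $L^q(\mR_+)$, but both pieces fit the framework already used on $L^p(\mR_+)$.
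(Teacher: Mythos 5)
Your proposal follows essentially the same route as the paper's proof: the same test sequence obtained from Lemma~\ref{le:SO-fundamental-property} and upgraded by Lemma~\ref{le:SO-uniform}, the same decomposition into multiplication operators, $S$, and $W_\alpha$, the same appeal to Lemma~\ref{le:strong-shift} for the dilated shifts, and the same treatment of the dual limit via $(W_\alpha)^*=\beta'W_\beta$. The only small point to tidy up is the pointwise convergence of $\beta_n(\tau)=\tau e^{\zeta(h_n^\xi\tau)}$ with $\zeta(t):=\log[\beta(t)/t]$, which your chosen family does not directly control: either include $\zeta$ among the functions fed into Lemma~\ref{le:SO-fundamental-property} (as the paper does) or derive its limit from that of $\omega$ by another application of Lemma~\ref{le:SO-uniform}.
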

\begin{proof}
Fix $s\in\{0,\infty\}$ and $\xi\in M_s(SO(\mR_+))$. From Lemma~\ref{le:SOS-inverse}
it follows that $\beta:=\alpha_{-1}$ is a slowly oscillating shift.
Then, by Lemma~\ref{le:exp-repr}, $\alpha',\beta'\in SO(\mR_+)$ and the
functions $\omega$ and $\zeta(t):=\log[\beta(t)/t]$
are real-valued slowly oscillating functions. Clearly,
$\overline{a},\overline{b}, \overline{c},\overline{d}\in SO(\mR_+)$. Let
\[
\cG:=\{a,b,c,d,\alpha',\beta',\omega,\zeta\}
\subset SO(\mR_+).
\]
By Lemma~\ref{le:SO-fundamental-property}, there exists a test sequence
$h^\xi=\{h_n^\xi\}_{n=1}^\infty\subset\mR_+$ relative to the point $s$
such that the limit
\begin{equation}\label{eq:LO-dilations-2}
g(\xi)=\xi(g)=\lim_{n\to\infty}g(h_n^\xi)
\end{equation}
exists for every function $g\in\cG$. Lemma~\ref{le:SO-uniform} implies
that for every $t\in\mR_+$,
\begin{equation}\label{eq:LO-dilations-3}
\lim_{n\to\infty}|g(h_n^\xi t)-g(h_n^\xi)|=0,
\end{equation}
and the convergence is uniform in $t$ on every segment $J\subset\mR_+$.
Since
\[
V_{h_n^\xi}^{-1}(gI)V_{h_n^\xi}=g_nI,
\quad
(V_{h_n^\xi}(gI)V_{h_n^\xi})^*=\overline{g_n}I,
\]
where $g_n(t):=g(h_n^\xi t)$ for all $t\in\mR_+$, we infer from
\eqref{eq:LO-dilations-2} and \eqref{eq:LO-dilations-3} that for the
multiplication operator on $L^p(\mR_+)$ and its adjoint on $L^q(\mR_+)$,
\[
\begin{split}
&
\operatornamewithlimits{s-lim}_{n\to\infty}
V_{h_n^\xi}^{-1}(gI)V_{h_n^\xi}
=
\operatornamewithlimits{s-lim}_{n\to\infty}g_nI
=
\lim_{n\to\infty}g(h_n^\xi)I=g(\xi)I,
\\
&
\operatornamewithlimits{s-lim}_{n\to\infty}
(V_{h_n^\xi}^{-1}(gI)V_{h_n^\xi})^*
=
\operatornamewithlimits{s-lim}_{n\to\infty}\overline{g_n}I
=
\lim_{n\to\infty}\overline{g(h_n^\xi)}I=\overline{g(\xi)}I.
\end{split}
\]
Hence
\begin{equation}\label{eq:LO-dilations-4}
(gI)_{\cV_{h^\xi}^s}=g(\xi)I
\quad\mbox{for}\quad g\in\{a,b,c,d\}.
\end{equation}
From Lemma~\ref{le:exp-repr} it follows that $\alpha(t)=te^{\omega(t)}$.
Therefore, for all $n\in\mN$,
\begin{equation}\label{eq:LO-dilations-5}
V_{h_n^\xi}^{-1}W_\alpha V_{h_n^\xi}=W_{\alpha_\xi^{(n)}},
\end{equation}
where $\alpha_\xi^{(n)}(t):=te^{\omega(h_n^\xi t)}$ for $t\in\mR_+$.
From \eqref{eq:LO-dilations-2}--\eqref{eq:LO-dilations-3} we conclude that
\begin{equation}\label{eq:LO-dilations-6}
\lim_{n\to\infty}\alpha_\xi^{(n)}(t)=te^{\omega(\xi)}=\alpha_\xi(t),
\quad t\in\mR_+.
\end{equation}
Since $\log\alpha'$ is bounded, we have
\begin{equation}\label{eq:SOS-derivative*}
0<m_\alpha:=\inf_{y\in\mR_+}\alpha'(y).
\end{equation}
Let $\beta_\xi^{(n)}$ be the inverse shift to $\alpha_\xi^{(n)}$. It is
easy to see that for all $n\in\mN$ and $t\in\mR_+$,
\[
(\beta_\xi^{(n)})'(t)
=
\frac{1}{(\alpha_\xi^{(n)})'[\beta_\xi^{(n)}(t)]}
=
\frac{1}{\alpha'[h_n^\xi\beta_\xi^{(n)}(t)]}.
\]
From this equality and \eqref{eq:SOS-derivative*} it follows that for all
$n\in\mN$ and $t\in\mR_+$,
\begin{equation}\label{eq:LO-dilations-7}
(\beta_\xi^{(n)})'(t)\le 1/m_\alpha<+\infty.
\end{equation}
Moreover, the derivative of the inverse shift to $\alpha_\xi$ is constant.
Thus, combining \eqref{eq:LO-dilations-5}--\eqref{eq:LO-dilations-7} and
Lemma~\ref{le:strong-shift}, we see that
\begin{equation}\label{eq:LO-dilations-8}
\operatornamewithlimits{s-lim}_{n\to\infty}
V_{h_n^\xi}^{-1}W_\alpha V_{h_n^\xi}=W_{\alpha_\xi}
\quad\mbox{on}\quad L^p(\mR_+).
\end{equation}
It is easy to see that $(W_\alpha)^*=\beta'W_\beta$. We have already proved
that
\begin{equation}\label{eq:LO-dilations-9}
\operatornamewithlimits{s-lim}_{n\to\infty}
V_{h_n^\xi}^{-1}(\beta' I)V_{h_n^\xi}=\beta'(\xi)I
\quad\mbox{on}\quad L^q(\mR_+).
\end{equation}
Analogously to \eqref{eq:LO-dilations-8} one can show that
\begin{equation}\label{eq:LO-dilations-10}
\operatornamewithlimits{s-lim}_{n\to\infty}
V_{h_n^\xi}^{-1}W_\beta V_{h_n^\xi}=W_{\beta_\xi}
\quad\mbox{on}\quad L^q(\mR_+),
\end{equation}
where $\beta_\xi(t)=te^{\zeta(\xi)}$.
From \eqref{eq:LO-dilations-9}--\eqref{eq:LO-dilations-10} we obtain
\begin{align}
\operatornamewithlimits{s-lim}_{n\to\infty}\left(V_{h_n^\xi}^{-1} W_\alpha V_{h_n^\xi}\right)^*
&=
\left(\operatornamewithlimits{s-lim}_{n\to\infty}V_{h_n^\xi}^{-1} (\beta'I) V_{h_n^\xi}\right)
\left(\operatornamewithlimits{s-lim}_{n\to\infty} V_{h_n^\xi}^{-1} W_\beta V_{h_n^\xi}\right)\nonumber
\\
&=
\beta'(\xi)W_{\beta_\xi}.
\label{eq:LO-dilations-12}
\end{align}
Equalities \eqref{eq:LO-dilations-8} and \eqref{eq:LO-dilations-12} imply
that
\begin{equation}\label{eq:LO-dilations-13}
(W_\alpha)_{\cV_{h^\xi}^s}=W_{\alpha_\xi}.
\end{equation}
It is easy to see that $S^*=S$ and $V_{h_n^\xi}^{-1}SV_{h_n^\xi}=S$. Hence
\begin{equation}\label{eq:LO-dilations-14}
(P_+)_{\cV_h^\xi}=P_+,
\quad
(P_-)_{\cV_h^\xi}=P_-.
\end{equation}
Combining \eqref{eq:LO-dilations-4}, \eqref{eq:LO-dilations-13}, and
\eqref{eq:LO-dilations-14} with Lemma~\ref{le:LO-properties}(b),
we see that the limit operator $N_{\cV_{h^\xi}^s}$ exists and is calculated by
\eqref{eq:LO-dilations-1}.
\QED
\end{proof}
\subsection{Realization with modulations}
For $x\in\mR$, consider the modulation
operator $E_x$ defined on $L^p(\mR_+)$ by
\[
(E_xf)(t):=t^{ix}f(t)\quad (t\in\mR_+).
\]
It is clear that $E_x$ is invertible on the space $L^p(\mR_+)$ and
$E_x^{-1}=E_{-x}$. Moreover, $\|E_x\|_{\cB}=1$ and hence $E_x$ is a
pseudoisometry for each $x\in\mR$.

Fix $s\in\{-\infty,+\infty\}$. We say that a sequence $\mu:=\{\mu_n\}_{n=1}^\infty\subset\mR$
is a test sequence relative to the point $s$ if
\[
\lim_{n\to\infty}\mu_n=s.
\]
With each test sequence $\mu$ relative to the point $s$ we associate
the sequence of pseudoisometries $\cE_\mu^s:=\{E_{\mu_n}\}_{n=1}^\infty\subset\cB$.
\begin{lemma}\label{le:LO-compact-modulations}
Suppose $\mu:=\{\mu_n\}_{n=1}^\infty\subset\mR$ is a test sequence
relative to a point $s\in\{-\infty,+\infty\}$. For any operator
$K\in\cK$, the limit operator $K_{\cE_\mu^s}$ with respect to the
sequence of pseudoisometries
$\cE_\mu^s:=\{E_{\mu_n}\}_{n=1}^\infty\subset\cB$ exists and is
the zero operator.
\end{lemma}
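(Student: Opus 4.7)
The plan is to mimic the structure of the proof of Lemma~\ref{le:LO-compact-dilations}, replacing dilations by modulations throughout. The first step is to conjugate everything by the isometric isomorphism $E\Phi : L^p(\mR_+) \to L^p(\mR)$ built from \eqref{eq:def-E} and \eqref{eq:def-Phi}. A direct computation shows that
\[
\big(E\Phi E_{\mu_n} \Phi^{-1} E^{-1} g\big)(x) = e^{i\mu_n x}\, g(x), \quad x \in \mR,
\]
so $E_{\mu_n}$ corresponds under this conjugation to the multiplication operator $\widetilde{E}_{\mu_n} \in \cB(L^p(\mR))$ by $e^{i\mu_n x}$, while $\widetilde{K} := E\Phi K \Phi^{-1} E^{-1}$ is compact on $L^p(\mR)$. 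Thus it is enough to prove that $\widetilde{E}_{\mu_n}^{-1} \widetilde{K} \widetilde{E}_{\mu_n}$ converges strongly to zero on $L^p(\mR)$, together with the analogous statement for the adjoint on $L^q(\mR)$.

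The key step will be to show that the modulation operators $\widetilde{E}_{\mu_n}$ tend to zero in the weak operator topology on $L^p(\mR)$ as $|\mu_n| \to \infty$. For any $f \in L^p(\mR)$ and $g \in L^q(\mR)$, the product $fg$ lies in $L^1(\mR)$ by H\"older's inequality, and the Riemann-Lebesgue lemma yields
\[
\int_\mR e^{i\mu_n x}\, f(x)\, g(x)\, dx \to 0 \quad \text{as } |\mu_n| \to \infty,
\]
which is precisely weak convergence of $\widetilde{E}_{\mu_n} f$ to zero in $L^p(\mR)$. Since $\widetilde{K}$ is compact, it maps this weakly null sequence to a norm null sequence, so $\widetilde{K} \widetilde{E}_{\mu_n} f \to 0$ in norm. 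Together with $\|\widetilde{E}_{\mu_n}^{-1}\|_{\cB(L^p(\mR))} = 1$, this gives $\widetilde{E}_{\mu_n}^{-1} \widetilde{K} \widetilde{E}_{\mu_n} f \to 0$ in norm for every $f$, i.e., the required strong convergence to zero.

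The adjoint statement follows by the same argument applied to $\widetilde{K}^* \in \cK(L^q(\mR))$ and $(\widetilde{E}_{\mu_n}^{-1})^*$, which is again multiplication by a unimodular exponential. Transferring back via $E\Phi$ then gives that both strong limits in \eqref{eq:LO-defi} vanish, so $K_{\cE_\mu^s}$ exists and equals zero. I do not anticipate any real obstacle here: the only step requiring explicit verification is the identity $E\Phi E_\mu \Phi^{-1} E^{-1} = \widetilde{E}_\mu$, a short computation from the definitions of $E$ and $\Phi$, and the rest is routine Banach space operator theory.
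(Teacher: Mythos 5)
Your proposal is correct, and it follows the same overall scheme as the paper: conjugate by the isometric isomorphism $E\Phi$ so that $E_{\mu_n}$ becomes multiplication by $e^{i\mu_n x}$ on $L^p(\mR)$ and $K$ becomes a compact operator $\widetilde{K}$, then reduce to showing $\widetilde{E}_{\mu_n}^{-1}\widetilde{K}\widetilde{E}_{\mu_n}\to 0$ strongly together with the adjoint statement. The one place you diverge is the core analytic fact: the paper simply cites \cite[Lemma~3.8]{KL08} (and \cite[Lemma~10.1]{BKS02} for $p=2$) for the strong convergence $e_{-\mu_n}\widetilde{K}e_{\mu_n}I\to 0$, whereas you prove it directly — the Riemann--Lebesgue lemma gives $\widetilde{E}_{\mu_n}f\to 0$ weakly in $L^p(\mR)$ (since $fg\in L^1(\mR)$ for $g\in L^q(\mR)$), compactness of $\widetilde{K}$ upgrades this to norm convergence of $\widetilde{K}\widetilde{E}_{\mu_n}f$, and $\|\widetilde{E}_{\mu_n}^{-1}\|=1$ finishes. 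This argument is standard and correct (a compact operator on a Banach space maps weakly null sequences to norm null sequences), and it makes the lemma self-contained at essentially no extra cost; the paper's version is shorter but leans on an external reference. Your treatment of the adjoint on $L^q(\mR)$ by the identical argument is also fine, so both strong limits in \eqref{eq:LO-defi} vanish as required.
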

\begin{proof}
Following the proof of Lemma~\ref{le:LO-compact-dilations}, consider
the isometric isomorphism $E\Phi:L^p(\mR_+)\to L^p(\mR)$ and the operator
$\widetilde{K}:=E\Phi K\Phi^{-1}E^{-1}\in\cK(L^p(\mR))$, where
$K\in\cK(L^p(\mR_+))$. For every $x\in\mR$ we get $E\Phi E_x\Phi^{-1}E^{-1}=e_xI,$
where $e_x(y):=e^{ixy}$ for $y\in\mR$. It was shown in \cite[Lemma~3.8]{KL08}
(see also \cite[Lemma~10.1]{BKS02} for $p=2$) that
$\operatornamewithlimits{s-lim}\limits_{n\to\infty}e_{-\mu_n}\widetilde{K}e_{\mu_n}I=0$
on $L^p(\mR)$ for every sequence $\mu_n$ tending to $+\infty$ or to $-\infty$. Therefore
\[
\operatornamewithlimits{s-lim}_{n\to\infty}
E_{\mu_n}^{-1}KE_{\mu_n}=
\operatornamewithlimits{s-lim}_{n\to\infty}
\Phi^{-1}E^{-1}e_{-\mu_n}\widetilde{K}e_{\mu_n}E\Phi=0
\quad\mbox{on}\quad L^p(\mR_+).
\]
Analogously, $(E_{\mu_n}^{-1}KE_{\mu_n})^*=E_{\mu_n}^{-1}K^*E_{\mu_n}$
converges strongly to zero on the space $L^q(\mR_+)$. Thus, for every sequence
$\mu=\{\mu_n\}_{n=1}^\infty\subset\mR$ converging to $s\in\{-\infty,+\infty\}$,
we have $K_{\cE_\mu^s}=0$.
\QED
\end{proof}
\begin{lemma}\label{le:LO-modulations}
Suppose $g\in SO(\mR_+)$.
Let $\mu=\{\mu_n\}_{n=1}^\infty\subset\mR$ be a test sequence relative to $s\in\{-\infty,+\infty\}$.
Then the limit operators $(gI)_{\cE_\mu^s}$ and $S_{\cE_\mu^s}$
with respect to the sequence of pseudo\-isometries
$\cE_\mu^s:=\{E_{\mu_n}\}_{n=1}^\infty\subset\cB$ exist and are given by
\[
(gI)_{\cE_\mu^{\pm\infty}}=gI,
\quad
S_{\cE_\mu^{\pm\infty}}=\pm I.
\]
\end{lemma}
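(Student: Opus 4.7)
The first assertion is immediate: both $gI$ and $E_{\mu_n}$ act on $L^p(\mR_+)$ as multiplication operators (by $g(t)$ and $t^{i\mu_n}$ respectively), so they commute. Hence $E_{\mu_n}^{-1}(gI)E_{\mu_n}=gI$ identically in $n$, and the same identity holds on the dual $L^q(\mR_+)$, giving $(gI)_{\cE_\mu^s}=gI$ with no limit process, independently of $s\in\{-\infty,+\infty\}$.

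For $S$ the plan is to pass to $L^p(\mR)$ via the isometric isomorphism $E\Phi$ already employed in the proof of Lemma~\ref{le:LO-compact-modulations}. Theorem~\ref{th:algebra-A} combined with the identity $M=FE$ gives $E\Phi S\Phi^{-1}E^{-1}=W^0(s_p)$, while a direct computation on elementary functions yields $E\Phi E_x\Phi^{-1}E^{-1}=e_xI$ where $e_x(y):=e^{ixy}$. The problem therefore reduces to establishing the strong limit
\[
\operatornamewithlimits{s-lim}_{n\to\infty}\,e_{-\mu_n}W^0(s_p)e_{\mu_n}I=\pm I
\]
on $L^p(\mR)$ together with its adjoint version on $L^q(\mR)$, as $\mu_n\to\pm\infty$.

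Next I would use the standard intertwining $e_{-\mu_n}W^0(s_p)e_{\mu_n}I=W^0(s_p(\cdot+\mu_n))$, obtained by a change of variable in the Fourier transform; since translations preserve the $M_p(\mR)$-norm, these operators are uniformly bounded in $\cB(L^p(\mR))$ by $\|s_p\|_{M_p(\mR)}$. Because $s_p\in C_p(\overline{\mR})$ with one-sided limits $s_p(\pm\infty)=\pm 1$ attained at exponential rate, the shifted symbols $s_p(\cdot+\mu_n)$ tend to $\pm 1$ pointwise and locally uniformly on $\mR$. For each Schwartz function $f$ one obtains $W^0(s_p(\cdot+\mu_n))f\to\pm f$ in $L^p(\mR)$ by dominated convergence on the Fourier side combined with Plancherel and Hausdorff--Young. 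The uniform $M_p$-bound then propagates the strong convergence from this dense subspace to all of $L^p(\mR)$, and the same reasoning on the dual side handles $L^q(\mR)$.

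The main obstacle is the last strong-convergence step. Norm convergence of $W^0(s_p(\cdot+\mu_n))-(\pm I)$ genuinely fails, since the shifted symbol retains a jump of size $2$ at the opposite infinity, so one cannot bypass a density-plus-uniform-bound argument. Handling it for the full range $1<p<\infty$ requires either a careful separate treatment of $p<2$ and $p>2$ (interpolating Plancherel with Hausdorff--Young), or the invocation of a general strong-limit theorem for $C_p(\overline{\mR})$-multipliers under modulation, analogous to \cite[Lemma~3.8]{KL08} used in the proof of Lemma~\ref{le:LO-compact-modulations}.
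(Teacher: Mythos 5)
Your treatment of $(gI)_{\cE_\mu^s}$ and your overall architecture for $S$ (conjugation turns the convolution symbol into its translate $s_p(\cdot+\mu_n)$; uniform boundedness of the translated multipliers; convergence on a dense subspace; the self-adjointness observation to handle the dual limit) coincide with the paper's. The genuine gap is exactly the step you flag as the ``main obstacle,'' and your first proposed remedy does not close it. The Plancherel/Hausdorff--Young route gives, for Schwartz $f$, convergence of $F^{-1}\bigl[(s_p(\cdot+\mu_n)-1)\widehat{f}\,\bigr]$ to zero only in $L^r(\mR)$ with $r\ge 2$: the inverse Fourier transform maps $L^r\to L^{r'}$ only for $1\le r\le 2$, so the target exponent is always $\ge 2$, and ``interpolating Plancherel with Hausdorff--Young'' reproduces Hausdorff--Young itself rather than extending it below $2$. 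Pointwise dominated convergence of the symbol plus an $L^2$ bound does not yield $L^p$ convergence for $1<p<2$ on the unbounded line without additional uniform decay control in the spatial variable, which your argument does not supply.

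The paper closes this gap with Theorem~\ref{th:Stechkin}. It works on the dense set $D$ of $\psi$ whose Mellin transform has compact support, inserts a cutoff $\chi\in C_0^\infty(\mR)$ with $\operatorname{Co}(s_{p,\mu_n}-1)\psi=M^{-1}\chi(s_{p,\mu_n}-1)M\psi$, and estimates the multiplier norm of $\chi(s_{p,\mu_n}-1)$ by $\|\cdot\|_{L^\infty}+V(\cdot)$; both terms tend to zero because $s_p(x+\mu_n)-1$ and $s_p'(x+\mu_n)$ tend to zero uniformly on the compact support of $\chi$ as $\mu_n\to+\infty$. This is valid for all $1<p<\infty$ at once and is essentially the ``general strong-limit theorem for $C_p(\overline{\mR})$-multipliers under modulation'' that you name as your fallback; to make your proof complete you should carry out that version (or an equivalent decay estimate, e.g.\ controlling $(1+y^2)\,|F^{-1}[(s_p(\cdot+\mu_n)-1)\widehat{f}\,](y)|$ uniformly via derivatives of the symbol) rather than rely on Hausdorff--Young.
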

\begin{proof}
Let $s=+\infty$. It is easy to see that
\[
E_{\mu_n}^{-1}(gI)E_{\mu_n}=gI,
\quad
(E_{\mu_n}^{-1}(gI)E_{\mu_n})^*=(gI)^*
\]
for all $n\in\mN$. Hence
$(gI)_{\cE_\mu^{+\infty}}=gI$.

Let us show that $S_{\cE_\mu^{+\infty}}=I$. From Theorem~\ref{th:algebra-A}
it follows that
\[
E_{\mu_n}^{-1}SE_{\mu_n}=\Phi^{-1}\operatorname{Co}(s_{p,\mu_n})\Phi,
\]
where $s_{p,\mu_n}(x):=s_p(x+\mu_n)$ for $x\in\mR$. Hence we must show that
\begin{equation}\label{eq:LO-modulations-1}
\lim_{n\to\infty}\|\operatorname{Co}(s_{p,\mu_n}-1)\psi\|_{L^p(\mR_+,d\mu)}=0
\end{equation}
for $\psi\in L^p(\mR_+,d\mu)$.

According to \cite[Chap.~III, Section~2.2]{S70}, for every $f\in L^p(\mR)$
and every $\varphi\in L^1(\mR)$ with $\int_\mR\varphi(x)dx=1$, we have
\begin{equation}\label{eq:LO-modulations-2}
\lim_{\eps\to 0}\|f*\varphi_\eps-f\|_{L^p(\mR)}=0,
\end{equation}
where $\varphi_\eps(x):=\eps^{-1}\varphi(x/\eps)$ for $x\in\mR$ and $\eps>0$.
Choosing now rapidly decreasing functions $\varphi$ in the Schwarz space
$\mathcal{S}(\mR)$ whose Fourier transforms $F\varphi$ have compact supports in
$\mR$, we derive from \eqref{eq:LO-modulations-2} that the set $Y$
of the functions $f\in L^2(\mR)\cap L^p(\mR)$, for which
$Ff$ has compact support in
$\mR$, is dense in $L^p(\mR)$. Hence the set $D$ of all functions
$\psi\in L^2(\mR_+,d\mu)\cap L^p(\mR_+,d\mu)$, for which the Mellin transform
$M\psi$ has compact support in $\mR$, is dense in $L^p(\mR_+,d\mu)$.
Obviously, it is sufficient to prove \eqref{eq:LO-modulations-1} for all
$\psi\in D$.

Fix $\psi\in D$. Since the support of $M\psi$ is compact, there exists a function
$\chi\in C_0^\infty(\mR)$ with a compact support $K$ such that
\begin{equation}\label{eq:LO-modulations-3}
\operatorname{Co}(s_{p,\mu_n}-1)\psi=M^{-1}\chi(s_{p,\mu_n}-1)M\psi.
\end{equation}
From Theorem~\ref{th:Stechkin} and \eqref{eq:def-E} it follows that
\begin{equation}\label{eq:LO-modulations-4}
\|M^{-1}\chi(s_{p,\mu_n}-1)M\psi\|_{L^p(\mR_+,d\mu)}
\le
c_p
\|\psi\|_{L^p(\mR_+,d\mu)}
\|\chi(s_{p,\mu_n}-1)\|_V,
\end{equation}
where $c_p:=\|S\|_{\cB(L^p(\mR))}$ and $\|\cdot\|_V:=\|\cdot\|_{L^\infty(\mR)}+V(\cdot)$.
It remains to show that
\begin{equation}\label{eq:LO-modulations-5}
\|\chi(s_{p,\mu_n}-1)\|_V
=
\|\chi(s_{p,\mu_n}-1)\|_{L^\infty(\mR)}+V\big(\chi(s_{p,\mu_n}-1)\big)
\to 0
\end{equation}
as $n\to\infty$. We have
\begin{equation}\label{eq:LO-modulations-6}
\|\chi(s_{p,\mu_n}-1)\|_{L^\infty(\mR)}
\le
\|\chi\|_{L^\infty(\mR)}\sup_{x\in K}|s_p(x+\mu_n)-1|
\end{equation}
and
\begin{align}
V\big(&\chi(s_{p,\mu_n}-1)\big)
=
\int_\mR\left|\frac{d}{dx}\big[\chi(x)(s_p(x+\mu_n)-1)\big]\right|dx
\nonumber
\\
&\le
\int_\mR|\chi(x)|\,|s_p'(x+\mu_n)|dx
+
\int_\mR|\chi'(x)|\,|s_p(x+\mu_n)-1|dx
\nonumber
\\
&\le
C\sup_{x\in K}|s_p'(x+\mu_n)|+
C'\sup_{x\in K}|s_p(x+\mu_n)-1|,
\label{eq:LO-modulations-7}
\end{align}
where
\[
C:=\int_\mR|\chi(x)|dx<\infty,\quad
C':=\int_\mR|\chi'(x)|dx<\infty.
\]
Taking into account that $\mu_n\to+\infty$ and
\[
s_p(x+\mu_n)=\coth[\pi(x+\mu_n+i/p)],
\
s_p'(x+\mu_n)=-\frac{\pi}{\sinh^2[\pi(x+\mu_n+i/p)]},
\]
we see that
\begin{equation}\label{eq:LO-modulations-8}
\lim_{n\to\infty}\sup_{x\in K}|s_p(x+\mu_n)-1|=0,
\quad
\lim_{n\to\infty}\sup_{x\in K}|s_p'(x+\mu_n)|=0.
\end{equation}
Combining \eqref{eq:LO-modulations-6}--\eqref{eq:LO-modulations-8},
we arrive at \eqref{eq:LO-modulations-5}. From
\eqref{eq:LO-modulations-3}--\eqref{eq:LO-modulations-5} we obtain
\eqref{eq:LO-modulations-1}. Since
$(E_{\mu_n}^{-1}SE_{\mu_n})^*=E_{\mu_n}^{-1}SE_{\mu_n}$, this completes
the proof for $s=+\infty$.

The proof for $s=-\infty$ is analogous.
\QED
\end{proof}
\section{Localization}\label{sec:localization}
\subsection{Algebras $\cZ$, $\cF$, and $\Lambda$}
Let us consider
\[
\begin{split}
\cZ &:=\alg_\cB\big\{I,S,cR,K:\ c\in SO(\mR_+),\ K\in\cK\big\},
\\
\cF &:=\alg_\cB\big\{aI,S,W_\alpha,W_\alpha^{-1}:a\in SO(\mR_+)\big\},
\\
\Lambda &:=\big\{A\in\cB:\ AC-CA\in\cK\text{ for all }C\in\cZ\big\}.
\end{split}
\]

It is easy to see that $\Lambda$ is a closed unital subalgebra of $\cB$.
\begin{theorem}[{\cite[Theorem~6.8]{KKLsufficiency}}]
\label{th:embeddings}
We have $\cK\subset\cZ\subset\cF\subset\Lambda$.
\end{theorem}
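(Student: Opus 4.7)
My plan is to verify the chain $\cK\subset\cZ$, $\cZ\subset\cF$, $\cF\subset\Lambda$ in turn. The first inclusion is immediate from the definition of $\cZ$, in which $\cK$ is adjoined as a generator. For the second, $I$ and $S$ belong to $\cF$ by definition, and Theorem~\ref{th:algebra-A} places $R\in\cA=\alg_\cB\{I,S\}\subset\cF$, so each generator $cR$ with $c\in SO(\mR_+)$ equals $(cI)R$ and lies in $\cF$. The one substantive step here is $\cK\subset\cF$: I would exhibit integral operators $aRbI$ with $a,b\in C_c(\mR_+)\subset SO(\mR_+)$ as elements of $\cF$, verify they are compact on $L^p(\mR_+)$ (their kernels $a(t)b(\tau)/[\pi i(\tau+t)]$ are bounded and have compact support in $\mR_+\times\mR_+$), and then check that norm limits of linear combinations of such operators exhaust $\cK$.

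For $\cF\subset\Lambda$, I would begin by observing that for a fixed $C\in\cZ$ the set $\{A\in\cB:[A,C]\in\cK\}$ is a closed subalgebra of $\cB$, because $A\mapsto[A,C]$ is a continuous derivation and $\cK$ is a closed two-sided ideal. Hence $\Lambda$ itself is a closed subalgebra, and a symmetric argument in $C$ shows that it suffices to verify $[A,C]\in\cK$ when $A$ and $C$ each range over the listed generators of $\cF$ and $\cZ$.

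The easy cases are: $[\cdot,I]=0$; $[\cdot,K]\in\cK$ by the ideal property; $[aI,S]$ and $[W_\alpha,S]$ are compact by Theorem~\ref{th:compactness-commutators} with $A=S\in\cA$; $[aI,cR]=c[aI,R]\in\cK$ again by Theorem~\ref{th:compactness-commutators} with $A=R\in\cA$; and $[S,cR]=[S,c]R+c[S,R]$, whose first summand is compact by Theorem~\ref{th:compactness-commutators} while the second vanishes since $S$ and $R$ are both Mellin convolution operators (Theorem~\ref{th:algebra-A}) and hence commute.

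The delicate case is $[W_\alpha,cR]$: a naive Leibniz split leaves the residue $(c\circ\alpha-c)W_\alpha R$, which is not visibly compact. The device is the symmetric telescoping
\[
[W_\alpha,cR]=W_\alpha[c,R]+[W_\alpha,R]c-[c,R]W_\alpha,
\]
in which each bracket on the right is compact by Theorem~\ref{th:compactness-commutators} applied with $A=R\in\cA$ and either the multiplier $c$ or the shift $\alpha$. The case $[W_\alpha^{-1},cR]$ is analogous, using Lemma~\ref{le:SOS-inverse} to write $W_\alpha^{-1}=W_\beta$ with $\beta\in SOS(\mR_+)$. The two places where real work, rather than a citation, is required are establishing $\cK\subset\cF$ (since $\cF$ is defined without explicitly adjoining compact operators) and finding the symmetric telescoping above that makes $[W_\alpha,cR]$ manifestly compact; I expect the latter to be the main conceptual obstacle.
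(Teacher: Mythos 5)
Your overall architecture---reduce everything to commutators of generators, using that $\Lambda$ and, for fixed $A$, the set $\{C\in\cB:[A,C]\in\cK\}$ are closed subalgebras of $\cB$---is sound, and the inclusions $\cK\subset\cZ$ and $\cZ\subset\cF$ go through essentially as you describe. For the density step in $\cK\subset\cF$ you should still say explicitly that sums of products $a_j(t)b_j(\tau)$ are uniformly dense in $C_c(\mR_+\times\mR_+)$ (Stone--Weierstrass), that the operator norm of an integral operator is dominated by the $L^p_t(L^q_\tau)$ norm of its kernel, and that finite-rank operators are norm-dense in $\cK$ because $L^p$ has the approximation property; but that route does work.

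The gap sits exactly where you predicted the main obstacle, and your device for it fails: the ``symmetric telescoping'' is not an identity. Expanding with $[X,Y]=XY-YX$,
\[
W_\alpha[c,R]+[W_\alpha,R]c-[c,R]W_\alpha
=[W_\alpha,cR]-R\,[W_\alpha,cI]
=[W_\alpha,cR]-R\,(c\circ\alpha-c)W_\alpha,
\]
so the three brackets you exhibit differ from $[W_\alpha,cR]$ by precisely the residue $R(c\circ\alpha-c)W_\alpha$ that you were trying to eliminate, and their compactness proves nothing about $[W_\alpha,cR]$ until that term is handled. The missing ingredient is genuinely new information, not derivable from Theorem~\ref{th:compactness-commutators} alone: one must observe that $d:=c\circ\alpha-c$ extends continuously to $\overline{\mR}_+$ with $d(0)=d(\infty)=0$ (since $c$ is slowly oscillating and $\omega(t)=\log[\alpha(t)/t]$ is bounded, the points $t$ and $\alpha(t)$ lie in a common interval $[\lambda r,r]$ with $\lambda=e^{-2\|\omega\|_{L^\infty}}$), and then invoke the compactness of $dR$ for such $d$---the ``fixed singularities at $0$ and $\infty$'' result, \cite[Corollary~6.6]{KKLsufficiency}, which is the very fact this paper uses in the proof of Lemma~\ref{le:compactness}(a). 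With that in hand the naive Leibniz split you dismissed finishes the case directly: $[W_\alpha,cR]=(c\circ\alpha-c)W_\alpha R+c[W_\alpha,R]$, the second term is compact by Theorem~\ref{th:compactness-commutators}, and in the first term one commutes $W_\alpha$ past $R$ modulo compacts and applies the compactness of $(c\circ\alpha-c)R$. The case $[W_\alpha^{-1},cR]$ is then handled the same way via $\beta\in SOS(\mR_+)$ (Lemma~\ref{le:SOS-inverse}).
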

\begin{lemma}\label{le:alg-Lambda}
An operator $A\in\Lambda$ is Fredholm if and only if the coset $A^\pi:=A+\cK$
is invertible in the quotient algebra $\Lambda^\pi:=\Lambda/\cK$.
\end{lemma}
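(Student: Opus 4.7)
The plan is to reduce the statement to Atkinson's theorem together with a short commutator computation that verifies a Fredholm regularizer of $A$ automatically lies in $\Lambda$.

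The ``if'' direction is essentially free: if $A^\pi$ is invertible in $\Lambda^\pi$, there exists $B\in\Lambda$ with $AB-I,\,BA-I\in\cK$, and this is precisely Atkinson's criterion for Fredholmness of $A$ on $L^p(\mR_+)$.

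For the ``only if'' direction, let $A\in\Lambda$ be Fredholm. By Atkinson's theorem, there exists $B\in\cB$ (a priori not in $\Lambda$) such that $AB-I\in\cK$ and $BA-I\in\cK$, i.e.\ $AB=I+K_1$ and $BA=I+K_2$ with $K_1,K_2\in\cK$. The crucial step is to show that $B\in\Lambda$, that is, $BC-CB\in\cK$ for every $C\in\cZ$. Fix such a $C$. Since $A\in\Lambda$, the commutator $AC-CA$ is compact, so there is $K_3\in\cK$ with $AC=CA+K_3$. Then
\[
A(BC-CB)=ABC-ACB=(I+K_1)C-(CA+K_3)B=K_1C-CK_1-K_3B\in\cK,
\]
where we used $AB=I+K_1$ and $CAB=C(I+K_1)$. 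Multiplying by $B$ on the left and using $BA=I+K_2$ gives
\[
(I+K_2)(BC-CB)=B\bigl(K_1C-CK_1-K_3B\bigr)\in\cK,
\]
so $BC-CB\in\cK$ as claimed. Hence $B\in\Lambda$, and the cosets $B^\pi,A^\pi\in\Lambda^\pi$ satisfy $A^\pi B^\pi=B^\pi A^\pi=I^\pi$, so $A^\pi$ is invertible in $\Lambda^\pi$.

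There is no serious obstacle here: the argument is a standard ``inverse closedness modulo $\cK$'' calculation, and the only input beyond Atkinson is the definition of $\Lambda$ as the commutant of $\cZ$ modulo compact operators, which makes the compactness of $K_1C-CK_1-K_3B$ immediate. The only small point to keep in mind is to avoid circular reasoning by first establishing $A(BC-CB)\in\cK$ and then left-multiplying by $B$, rather than starting from the identity $B(AC-CA)=\cdots$ directly.
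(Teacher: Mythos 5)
Your argument is correct and is precisely the ``straightforward'' proof the paper omits: the only nontrivial point is that a regularizer $B$ of $A$ again commutes with $\cZ$ modulo $\cK$, and your commutator computation establishes exactly that (it is the inverse-closedness of the commutant $\Lambda^\pi$ of $\cZ^\pi$ in the Calkin-type algebra $\cB/\cK$, which one can also see in one line from $B^\pi C^\pi-C^\pi B^\pi=B^\pi(C^\pi A^\pi-A^\pi C^\pi)B^\pi=0$). Your ordering of the steps to avoid circularity is also the right way to write it at the operator level.
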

The proof is straightforward.
\subsection{Fredholmness of operators in the algebra $\Lambda$}
By \cite[Theorem~6.11]{KKLsufficiency}, the maximal ideal space $M(\cZ^\pi)$
of the commutative Banach algebra $\cZ^\pi:=\cZ/\cK$ is homeomorphic to the
set $\{-\infty,+\infty\}\cup(\Delta\times\mR)$.
Let
\begin{align*}
\cI_{\pm\infty}^\pi&:=\operatorname{id}_{\cZ^\pi}
\big\{P_\mp^\pi,(gR)^\pi\ : \ g\in SO(\mR_+)\big\},
\\
\cI_{\xi,x}^\pi&:=\big\{Z^\pi\in\cZ^\pi:\
(Z^\pi)\widehat{\hspace{2mm}}(\xi,x)=0\big\}\quad\text{for }\;(\xi,x)\in\Delta\times\mR,
\end{align*}
where $(Z^\pi)\widehat{\hspace{2mm}}$ is the Gelfand transform of $Z^\pi$, which was
explicitly given in \cite[Section~6]{KKLsufficiency}.
Further,
let $\cJ_{\pm\infty}^\pi$ and $\cJ_{\xi,x}^\pi$ be the
closed two-sided ideals of the Banach algebra $\Lambda^\pi$
generated by the ideals $\cI^\pi_{\pm\infty}$ and
$\cI^\pi_{\xi,x}$ of the algebra $\cZ^\pi$, respectively,
and put
\[
\Lambda^\pi_{\pm\infty}:=\Lambda^\pi/\cJ^\pi_{\pm\infty},
\quad
\Lambda^\pi_{\xi,x}:=\Lambda^\pi/\cJ^\pi_{\xi,x}
\]
for the corresponding quotient algebras.
\begin{theorem}[{\cite[Theorem~6.12]{KKLsufficiency}}]
\label{th:localization-realization}
An operator $A\in\Lambda$ is Fredholm on the space $L^p(\mR_+)$ if and only
if the following two conditions are fulfilled:
\begin{itemize}
\item[{\rm (i)}]
the cosets $A^\pi+\cJ_{\pm\infty}^\pi$ are invertible
in the quotient algebras ${\Lambda}_{\pm\infty}^\pi$,
respectively;

\item[{\rm(ii)}]
for every $(\xi,x)\in\Delta\times\mR$, the coset
$A^\pi+\cJ_{\xi,x}^\pi$ is invertible in the quotient
algebra ${\Lambda}_{\xi,x}^\pi$.
\end{itemize}
\end{theorem}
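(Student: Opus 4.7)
The plan is to deduce the criterion as a direct application of the Allan--Douglas local principle \cite{BS06}, using $\cZ^\pi$ as the central commutative localising subalgebra of $\Lambda^\pi$. First I would invoke Lemma~\ref{le:alg-Lambda} to rephrase Fredholmness of $A\in\Lambda$ as invertibility of $A^\pi$ in $\Lambda^\pi$. Next, by the very definition of $\Lambda$, every $A\in\Lambda$ commutes with every $Z\in\cZ$ modulo $\cK$, so $\cZ^\pi$ lies in the centre of $\Lambda^\pi$; one also checks that $\cZ^\pi$ is itself commutative by verifying that the generators $I$, $S$, $cR$ ($c\in SO(\mR_+)$) of $\cZ$ commute pairwise modulo $\cK$. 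For the commutators $[cI,S]$ and $[cI,R]$ this follows from Theorem~\ref{th:compactness-commutators} combined with \eqref{eq:S-R-relation}, while $[S,R]=0$ exactly, since by Theorem~\ref{th:algebra-A} both $\Phi S\Phi^{-1}$ and $\Phi R\Phi^{-1}$ are Mellin convolutions and therefore commute.

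With the commutative central subalgebra $\cZ^\pi\subset\Lambda^\pi$ in hand, Allan--Douglas asserts that $A^\pi$ is invertible in $\Lambda^\pi$ if and only if, for every maximal ideal $m$ of $\cZ^\pi$, the image of $A^\pi$ in the quotient of $\Lambda^\pi$ by the closed two-sided ideal generated by $m$ is invertible. By \cite[Theorem~6.11]{KKLsufficiency}, $M(\cZ^\pi)$ is homeomorphic to $\{-\infty,+\infty\}\cup(\Delta\times\mR)$, so there are exactly two families of maximal ideals of $\cZ^\pi$: the ideals $\cI^\pi_{\pm\infty}$ associated with the points $\pm\infty$ (described explicitly via the cosets $P_\mp^\pi$ and $(gR)^\pi$, $g\in SO(\mR_+)$, which vanish under the corresponding Gelfand transforms), and the evaluation kernels $\cI^\pi_{\xi,x}$ at the points $(\xi,x)\in\Delta\times\mR$. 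Since $\cJ^\pi_{\pm\infty}$ and $\cJ^\pi_{\xi,x}$ are by construction the two-sided ideals of $\Lambda^\pi$ generated by these maximal ideals of $\cZ^\pi$, the Allan--Douglas criterion translates verbatim into the local invertibility conditions (i) and (ii).

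The main obstacle is neither the reduction via Lemma~\ref{le:alg-Lambda} nor the invocation of Allan--Douglas, both of which are routine once the structural setup is in place, but rather the two prerequisites: the commutativity of $\cZ^\pi$ and the precise identification of its maximal ideal space $M(\cZ^\pi)$. Both are imported here from \cite{KKLsufficiency}, and the description at the ``boundary'' points $\pm\infty$ is particularly delicate because it encodes the asymptotic behaviour of $S$ and $R$ under modulations (in the spirit of Lemma~\ref{le:LO-modulations}, where $S$ becomes $\pm I$), which qualitatively differs from the behaviour over the fibres indexed by $\Delta\times\mR$. Granting those inputs, however, the proof reduces to a straightforward application of Allan--Douglas with the localising algebra $\cZ^\pi$, and the ideals $\cJ^\pi_{\pm\infty}$, $\cJ^\pi_{\xi,x}$ of $\Lambda^\pi$ are matched with the appropriate maximal ideals of $\cZ^\pi$ by construction.
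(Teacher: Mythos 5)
Your proposal is correct and follows essentially the route the paper indicates: this theorem is quoted from \cite[Theorem~6.12]{KKLsufficiency} without proof here, and it is obtained there exactly as you describe, by applying the Allan--Douglas local principle in $\Lambda^\pi$ with the central commutative subalgebra $\cZ^\pi$ (centrality and commutativity coming from Theorem~\ref{th:embeddings} and the definition of $\Lambda$, rather than a generator-by-generator check), the identification of $M(\cZ^\pi)$ with $\{-\infty,+\infty\}\cup(\Delta\times\mR)$, and Lemma~\ref{le:alg-Lambda}. Note only that for the sufficiency direction one merely needs the generators $P_\mp^\pi$, $(gR)^\pi$ of $\cI^\pi_{\pm\infty}$ to lie in the maximal ideals at $\pm\infty$ (their Gelfand transforms vanish there), which is part of the explicit description imported from \cite[Section~6]{KKLsufficiency}, so no genuine gap remains.
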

\subsection{Quotient algebras $\Lambda_{+\infty}$ and $\Lambda_{-\infty}$}
Let $\cJ_{\pm\infty}$ be the closed two-sided ideal of the algebra $\Lambda$
generated by the operator $P_{\mp}$ and the ideal $\cK$. By $\Lambda_{\pm\infty}$
denote the quotient algebra $\Lambda/\cJ_{\pm\infty}$.

It is not difficult to see that
$R\in\operatorname{id}_\cA\{P_-\}\cap\operatorname{id}_\cA\{P_+\}$.
Hence the ideals $\cJ_{\pm\infty}^\pi$ of the quotient algebra
$\Lambda^\pi$ can be also represented in the form
$\cJ_{\pm\infty}^\pi=\operatorname{id}_{\Lambda^\pi}\{P_\mp^\pi\}$,
and therefore, respectively,
\begin{equation}\label{id-form}
\cJ_{\pm\infty}^\pi=\{A^\pi:\ A\in\cJ_{\pm\infty}\}.
\end{equation}
\begin{lemma}\label{le:lifting}
Suppose $C_-,C_+\in\Lambda$.
\begin{enumerate}
\item[{\rm(a)}]
The invertibility of the coset $(C_+P_++C_-P_-)^\pi+\cJ_{+\infty}^\pi$ in the quotient
algebra $\Lambda_{+\infty}^\pi$ is equivalent to the invertibility
of the coset $C_++\cJ_{+\infty}$ in the quotient algebra $\Lambda_{+\infty}$.

\item[{\rm(b)}]
The invertibility of the coset $(C_+P_++C_-P_-)^\pi+\cJ_{-\infty}^\pi$ in the quotient
algebra $\Lambda_{-\infty}^\pi$ is equivalent to the invertibility
of the coset $C_-+\cJ_{-\infty}$ in the quotient algebra $\Lambda_{-\infty}$.
\end{enumerate}
\end{lemma}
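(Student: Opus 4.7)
The plan is to prove parts (a) and (b) in parallel by a standard third-isomorphism-theorem argument combined with the identity $P_++P_-=I$. Recall from $P_\pm=(I\pm S)/2$ that $P_++P_-=I$, and recall that $\cJ_{\pm\infty}$ is by definition the closed two-sided ideal of $\Lambda$ generated by $P_\mp$ and $\cK$, in particular $\cK\subset\cJ_{\pm\infty}$ and $P_\mp\in\cJ_{\pm\infty}$.

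First I would identify the two quotient algebras $\Lambda^\pi_{\pm\infty}=\Lambda^\pi/\cJ_{\pm\infty}^\pi$ and $\Lambda_{\pm\infty}=\Lambda/\cJ_{\pm\infty}$ with one another. This is where formula \eqref{id-form} is essential: it yields that $\cJ_{\pm\infty}^\pi=\cJ_{\pm\infty}/\cK$, so by the standard third isomorphism theorem for Banach algebras,
\[
\Lambda^\pi_{\pm\infty}=(\Lambda/\cK)\big/(\cJ_{\pm\infty}/\cK)\cong\Lambda/\cJ_{\pm\infty}=\Lambda_{\pm\infty},
\]
the isomorphism sending the coset $A^\pi+\cJ_{\pm\infty}^\pi$ to $A+\cJ_{\pm\infty}$ for every $A\in\Lambda$. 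In particular, invertibility of $A^\pi+\cJ_{\pm\infty}^\pi$ in $\Lambda^\pi_{\pm\infty}$ is equivalent to invertibility of $A+\cJ_{\pm\infty}$ in $\Lambda_{\pm\infty}$.

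Next I would reduce the coset $C_+P_++C_-P_-$ modulo $\cJ_{+\infty}$. Since $P_-\in\cJ_{+\infty}$ and $C_-\in\Lambda$, we have $C_-P_-\in\cJ_{+\infty}$; and since $P_++P_-=I$, we also have $P_+=I-P_-\equiv I\pmod{\cJ_{+\infty}}$, so $C_+P_+\equiv C_+\pmod{\cJ_{+\infty}}$. Therefore
\[
(C_+P_++C_-P_-)+\cJ_{+\infty}=C_++\cJ_{+\infty},
\]
which combined with the isomorphism above gives (a). For (b), exactly the same computation modulo $\cJ_{-\infty}$, using instead $P_+\in\cJ_{-\infty}$ and $P_-=I-P_+\equiv I\pmod{\cJ_{-\infty}}$, yields $(C_+P_++C_-P_-)+\cJ_{-\infty}=C_-+\cJ_{-\infty}$, proving (b).

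There is no real obstacle here: the only point requiring care is the identification of the two quotient algebras, which relies on \eqref{id-form} (i.e., on the fact that the kernel of the canonical map $\Lambda\to\Lambda^\pi_{\pm\infty}$ is exactly $\cJ_{\pm\infty}$ and not something strictly larger). Once this bookkeeping is in place, the lemma follows from the elementary observation that $P_\pm$ becomes the identity modulo $\cJ_{\pm\infty}$.
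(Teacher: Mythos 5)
Your proposal is correct and follows essentially the same route as the paper: the paper's map $\varphi:A+\cJ_{+\infty}\mapsto A^\pi+\cJ_{+\infty}^\pi$ is exactly your third-isomorphism-theorem identification (with injectivity coming from \eqref{id-form}), and the reduction of $C_+P_++C_-P_-$ to $C_+$ modulo the ideal via $P_-\in\cJ_{+\infty}$ and $P_++P_-=I$ is the same computation, merely carried out in $\Lambda$ rather than in $\Lambda^\pi$.
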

\begin{proof}
(a) Consider the mapping $\varphi:\Lambda/\cJ_{+\infty}\to\Lambda^\pi/\cJ_{+\infty}^\pi$
given by
\[
A+\cJ_{+\infty}\mapsto A^\pi+\cJ_{+\infty}^\pi\quad(A\in\Lambda).
\]
Obviously, $\varphi$ is a homomorphism of $\Lambda/\cJ_{+\infty}$ onto
$\Lambda^\pi/\cJ_{+\infty}^\pi$. If $A^\pi\in\cJ_{+\infty}^\pi$, then
from \eqref{id-form} it follows that $A\in\cJ_{+\infty}$, and therefore
$\varphi$ is injective. So, $\varphi$ is an isomorphism. Then
$C_++\cJ_{+\infty}=\varphi^{-1}(C_+^\pi+\cJ_{+\infty}^\pi)$ is invertible in
$\Lambda/\cJ_{+\infty}$ if and only if $C_+^\pi+\cJ_{+\infty}^\pi$ is
invertible in $\Lambda^\pi/\cJ_{+\infty}^\pi$.
By the definition of the ideal $\cJ_{+\infty}^\pi$, we have
$(C_\pm P_-)^\pi\in\cJ_{+\infty}^\pi$. From this observation and
$P_++P_-=I$ it follows that
\[
(C_+P_++C_-P_-)^\pi+\cJ_{+\infty}^\pi=C_+^\pi+\cJ_{+\infty}^\pi,
\]
which finishes the proof of part (a). The proof of part (b) is analogous.
\QED
\end{proof}
\section{Invertibility of binomial functional operators}\label{sec:FO}
\subsection{Invertibility of functional operators with slowly oscillating data}
For $s\in\{0,\infty\}$, $a,b\in SO(\mR_+)$, and $\alpha\in SOS(\mR_+)$, put
\begin{align*}
L_*(s;a,b,\alpha)
&:=
\liminf\limits_{t\to s}
\left( |a(t)|-|b(t)|\big(\alpha'(t)\big)^{-1/p}\right),
\\
L^*(s;a,b,\alpha)
&:=
\limsup\limits_{t\to s}
\left(|a(t)|-|b(t)|\big(\alpha'(t)\big)^{-1/p}\right).
\end{align*}
\begin{theorem}[{\cite[Theorem~1.1]{KKLsufficiency}}]
\label{th:FO}
Suppose $a,b\in SO(\mR_+)$ and $\alpha\in SOS(\mR_+)$. The
functional operator $aI-bW_\alpha$ is invertible on the
Lebesgue space $L^p(\mR_+)$ if and only if
either
\begin{equation}\label{eq:FO-1}
\inf\limits_{t\in\mR_+}|a(t)|>0,
\quad
L_*(0;a,b,\alpha)>0,
\quad
L_*(\infty;a,b,\alpha)>0;
\end{equation}
or
\begin{equation}\label{eq:FO-2}
\inf\limits_{t\in\mR_+}|b(t)|>0,
\quad
L^*(0;a,b,\alpha)<0,
\quad
L^*(\infty;a,b,\alpha)<0.
\end{equation}
If \eqref{eq:FO-1} holds, then
\[
(aI-bW_\alpha)^{-1}=\sum_{n=0}^\infty (a^{-1}bW_\alpha)^n a^{-1}I.
\]
If \eqref{eq:FO-2} holds, then
\[
(aI-bW_\alpha)^{-1}=-W_\alpha^{-1}\sum_{n=0}^\infty (b^{-1}aW_\alpha^{-1})^n b^{-1}I.
\]
\end{theorem}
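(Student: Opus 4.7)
The plan is to establish sufficiency by a Neumann-series construction and necessity by limit-operator localization using the machinery of Section~\ref{sec:LO}. For sufficiency under \eqref{eq:FO-1}, factor $aI-bW_\alpha=aI(I-a^{-1}bW_\alpha)$; since $\inf|a|>0$ the operator $aI$ is invertible, so it suffices to show the spectral radius of $T:=a^{-1}bW_\alpha$ on $L^p(\mR_+)$ is strictly less than $1$. A change of variables yields $\|cW_\alpha\|_{\cB}=\|c(\cdot)(\alpha'(\cdot))^{-1/p}\|_{L^\infty(\mR_+)}$, which iterates to
\[
\|T^n\|_{\cB}=\sup_{t\in\mR_+}\prod_{k=0}^{n-1}|(b/a)(\alpha^k(t))|\,\big(\alpha'(\alpha^k(t))\big)^{-1/p}.
\]
Because $\alpha\in SOS(\mR_+)$ has no interior fixed point, the sign of $\alpha(t)-t$ is constant on $\mR_+$ and every forward orbit exits each compact set after a uniformly bounded number of iterates. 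Combined with $L_*(0;a,b,\alpha),L_*(\infty;a,b,\alpha)>0$, which force $|(b/a)(t)|(\alpha'(t))^{-1/p}\le 1-\delta$ outside some compact set, this yields geometric decay of the above product and hence spectral radius $<1$, so the Neumann series converges to the stated inverse. Sufficiency under \eqref{eq:FO-2} is symmetric: factor $aI-bW_\alpha=-bW_\alpha(I-W_\beta(a/b)I)$ and repeat with $\beta=\alpha_{-1}\in SOS(\mR_+)$ via Lemma~\ref{le:SOS-inverse}.

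For necessity, suppose $aI-bW_\alpha$ is invertible. For each $\xi\in\Delta$ choose via Lemma~\ref{le:SO-fundamental-property} a test sequence $h^\xi\to s$ (with $\xi\in M_s(SO(\mR_+))$) along which $a,b,\omega,\alpha'$ realize their values at $\xi$. The computation underlying Lemma~\ref{le:LO-dilations}, with the $P_\pm$ factors omitted, shows that the dilation limit operator of $aI-bW_\alpha$ relative to $\cV_{h^\xi}^s$ equals $a(\xi)I-b(\xi)W_{\alpha_\xi}$, where $\alpha_\xi(t)=e^{\omega(\xi)}t$. Invertibility passes to the limit operator via Theorem~\ref{th:inv-quotient-algebra} with $\fA=\cB$ and $\fJ=\{0\}$, so $a(\xi)I-b(\xi)W_{\alpha_\xi}$ is invertible on $L^p(\mR_+)$. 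By Lemma~\ref{le:mult-shift-convolution} this operator is Mellin-convolution-similar to multiplication by $a(\xi)-b(\xi)e^{i\omega(\xi)(x+i/p)}$, so Theorem~\ref{th:invertibility-convolution} equates invertibility with the symbol being bounded away from zero. Since as $x$ ranges over $\mR$ the symbol traces the circle of radius $|b(\xi)|e^{-\omega(\xi)/p}=|b(\xi)|(\alpha'(\xi))^{-1/p}$ (Lemma~\ref{le:SOS-derivative}) centred at $a(\xi)$, this reduces to
\[
\rho(\xi):=|a(\xi)|-|b(\xi)|(\alpha'(\xi))^{-1/p}\ne 0
\quad\text{for every }\xi\in\Delta.
\]

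By Lemma~\ref{le:connected-fibers}, each of $M_0(SO(\mR_+))$ and $M_\infty(SO(\mR_+))$ is connected, so the continuous function $\rho$ has constant sign on each fiber; Lemma~\ref{le:SO-fundamental-property} identifies $\min_{M_s}\rho$ with $L_*(s;a,b,\alpha)$ and $\max_{M_s}\rho$ with $L^*(s;a,b,\alpha)$. If $\rho>0$ on both fibers one obtains $L_*(0),L_*(\infty)>0$; noting that any zero $a(\xi_0)=0$ would force $\rho(\xi_0)\le 0$, one also gets $\inf|a|>0$, hence \eqref{eq:FO-1}; symmetrically $\rho<0$ on both fibers yields \eqref{eq:FO-2}. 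The principal obstacle is to rule out the mixed case where $\rho>0$ on one fiber and $\rho<0$ on the other: all pointwise limit operators remain invertible while global invertibility must fail. The closure is a Fredholm/index dichotomy between the two sufficiency regimes --- \eqref{eq:FO-1} builds the inverse by forward iteration of $W_\alpha$ (the $a$-dominant case) whereas \eqref{eq:FO-2} requires backward iteration (the $b$-dominant case) --- and when the regimes disagree between $0$ and $\infty$ the one-sided Neumann constructions concentrate near one endpoint as approximate kernel elements and near the other as approximate cokernel elements, producing an infinite-dimensional kernel or cokernel and contradicting invertibility. This completes the necessity.
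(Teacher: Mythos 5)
This theorem is imported verbatim from the companion paper \cite{KKLsufficiency} and is not proved in the present text, so there is no in-paper proof to compare against; judged on its own terms, your sufficiency half is essentially correct, but your necessity half has genuine gaps. The sufficiency argument --- the norm identity $\|cW_\alpha\|_{\cB}=\sup_{t\in\mR_+}|c(t)|(\alpha'(t))^{-1/p}$, the uniform escape of forward orbits from compact sets, and the bound $|(b/a)(t)|(\alpha'(t))^{-1/p}\le 1-\delta$ near the endpoints giving spectral radius $<1$ --- is sound and yields both Neumann-series formulas.

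The necessity direction fails at three points. First, the ``circle'' argument collapses whenever $\omega(\xi)=0$ for some $\xi\in\Delta$, which occurs for every shift with $\alpha(t)/t\to 1$ at an endpoint: then $\alpha_\xi$ is the identity, the dilation limit operator is the scalar operator $(a(\xi)-b(\xi))I$, its Mellin symbol is a single point rather than a circle, and its invertibility yields only $a(\xi)\ne b(\xi)$, not $\rho(\xi)=|a(\xi)|-|b(\xi)|(\alpha'(\xi))^{-1/p}\ne 0$. This is not a repairable technicality within your framework: for $a\equiv 1$, $b\equiv -1$ and a shift with $\alpha(t)=t+\sqrt{t}$ for large $t$ (suitably modified near $0$), every dilation limit operator is invertible while $aI-bW_\alpha$ is not, so no argument based solely on these limit operators can produce $\rho\ne 0$ on $\Delta$. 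Second, $\Delta$ sees only the behaviour of $a$ near $0$ and $\infty$; your deduction of $\inf_{t\in\mR_+}|a(t)|>0$ from $\rho>0$ on $\Delta$ does not exclude interior zeros of $a$ on $\mR_+$, which require a separate argument of the kind carried out with $\varphi,\psi,\chi_u$ in Lemma~\ref{le:nec-2}(a). Third, and most substantially, the exclusion of the mixed case ($\rho>0$ on one fiber and $\rho<0$ on the other) is only described, not proved: the assertion that the two one-sided Neumann constructions concentrate into an infinite-dimensional kernel or cokernel is exactly the content of the projection-type operators $\Pi_r$ and $\Pi_l$ built in Lemma~\ref{le:FO-aux} together with the non-compactness statement of Lemma~\ref{le:compactness}(b), and without carrying out that construction the dichotomy between \eqref{eq:FO-1} and \eqref{eq:FO-2} is not established.
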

\subsection{Invertibility of auxiliary binomial functional operators}
\label{subsec:FO-aux}
Suppose $\alpha_0(t):=t$ and $\alpha_n(t):=\alpha[\alpha_{n-1}(t)]$ for $n\in\mZ$
and $t\in\mR_+$. Fix a point $\tau\in\mR_+$ and put
\[
\tau_-:=\lim_{n\to-\infty}\alpha_n(\tau),
\quad
\tau_+:=\lim_{n\to+\infty}\alpha_n(\tau).
\]
Then either $\tau_-=0$ and $\tau_+=\infty$, or $\tau_-=\infty$ and
$\tau_+=0$. Let $\gamma$ be a segment of $\mR_+$ with endpoints
$\tau$ and $\alpha(\tau)$.
Suppose $\chi_\gamma$ is the characteristic function of $\gamma$ and
$\widetilde{\chi}_\gamma$ is an arbitrary function in $C(\mR_+)$ with
nonempty support in $\gamma$. Consider the half-open intervals
\[
\gamma_-:=\bigcup_{k=1}^\infty\alpha_{-k}(\gamma),
\quad
\gamma_+:=\bigcup_{k=1}^\infty\alpha_k(\gamma).
\]
Let $\widetilde{\tau}_\pm$ denote the
endpoint of the half-open interval
$\gamma\cup\gamma_\pm$ such that $\widetilde{\tau}_\pm\ne\tau_\pm$, respectively.
Consider functions $\chi_\pm\in C(\overline{\mR}_+)$ such
that $\chi_-(t)=1$ for all $t\in\gamma_-$,
$\chi_+(t)=1$ for all $t\in\gamma_+$, and
$\chi_-(t)+\chi_+(t)=1$ for all $t\in\mR_+$.
\begin{lemma}\label{le:FO-aux}
Let $A=aI-bW_\alpha$ where $a,b\in SO(\mR_+)$ and $\alpha\in SOS(\mR_+)$.
\begin{enumerate}
\item[{\rm(a)}]
Suppose
\begin{equation}\label{eq:FO-aux-1}
L_*(\tau_-;a,b,\alpha)>0>L^*(\tau_+;a,b,\alpha),
\end{equation}
\begin{equation}\label{eq:FO-aux-2}
\inf_{t\in\gamma\cup\gamma_-}|a(t)|>0,
\quad
\inf_{t\in\gamma\cup\gamma_+}|b(t)|>0,
\end{equation}
and put
\begin{equation}\label{eq:FO-aux-3}
\widetilde{a}(t):=
\left\{\begin{array}{ll}
a(t) & \text{for }  t\in\gamma\cup\gamma_-,
\\
a(\widetilde{\tau}_-) & \text{otherwise},
\end{array}\right.
\
\widetilde{b}(t):=\left\{\begin{array}{lll}
b(t) & \text{for } t\in\gamma\cup\gamma_+,
\\
b(\widetilde{\tau}_+) & \text{otherwise}.
\end{array}\right.
\end{equation}
Then the operators
\begin{equation}\label{eq:FO-aux-4}
A_{1,\chi_-}:=\widetilde{a}I-b\chi_-W_\alpha,
\quad
A_{2,\chi_+}:=a\chi_+I-\widetilde{b}W_\alpha
\end{equation}
are invertible on the space $L^p(\mR_+)$ and
\begin{equation}\label{eq:FO-aux-5}
A\Pi_r=0,\quad
\widetilde{\chi}_\gamma\Pi_r=\widetilde{\chi}_\gamma I
\quad{for}\quad
\Pi_r:=(A_{1,\chi_-}^{-1}-A_{2,\chi_+}^{-1})a\chi_\gamma I.
\end{equation}

\item[{\rm(b)}]
Suppose
\begin{equation}\label{eq:FO-aux-6}
L^*(\tau_-;a,b,\alpha)<0<L_*(\tau_+;a,b,\alpha),
\end{equation}
\[
\inf_{t\in\gamma\cup\gamma_+}|a(t)|>0,
\quad
\inf_{t\in\gamma\cup\gamma_-}|b(t)|>0,
\]
and put
\begin{equation*}
\widetilde{a}(t):=
\left\{\begin{array}{ll}
a(t) & \text{for }  t\in\gamma\cup\gamma_+,
\\
a(\widetilde{\tau}_+) & \text{otherwise},
\end{array}\right.
\
\widetilde{b}(t):=\left\{\begin{array}{lll}
b(t) & \text{for } t\in\gamma\cup\gamma_-,
\\
b(\widetilde{\tau}_-) & \text{otherwise}.
\end{array}\right.
\end{equation*}
Then the operators
the operators
\[
A_{1,\chi_+\circ\alpha}:=\widetilde{a}I-b(\chi_+\circ\alpha)W_\alpha,
\quad
A_{2,\chi_-}:=a\chi_-I-\widetilde{b}W_\alpha,
\]
are invertible on the space $L^p(\mR_+)$ and
\[
\Pi_l A=0,
\quad
\Pi_l\widetilde{\chi}_\gamma I=\widetilde{\chi}_\gamma I
\quad{for}\quad
\Pi_l:=\chi_\gamma a(A_{1,\chi_+\circ\alpha}^{-1}-A_{2,\chi_-}^{-1}).
\]
\end{enumerate}
\end{lemma}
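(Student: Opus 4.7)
The plan is to apply Theorem~\ref{th:FO} to obtain invertibility of the auxiliary operators $A_{1,\chi_-}$ and $A_{2,\chi_+}$, and then to verify the algebraic identities in \eqref{eq:FO-aux-5} by explicit bookkeeping on the Neumann series for their inverses and on the supports of the functions they produce.

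First I would check that $\widetilde{a},\widetilde{b}\in SO(\mR_+)$: each is obtained from $a$ (respectively $b$) by freezing its values on one side of the orbit to a constant, so near $\tau_-$ the function $\widetilde{a}$ coincides with $a\in SO(\mR_+)$ and near $\tau_+$ it is constant, with continuity at the respective switching points $\widetilde{\tau}_\mp$ built into \eqref{eq:FO-aux-3}. The truncations $b\chi_-$ and $a\chi_+$ also lie in $SO(\mR_+)$, because $C(\overline{\mR}_+)\subset SO(\mR_+)$ and $SO(\mR_+)$ is a $C^*$-algebra. To apply Theorem~\ref{th:FO} to $A_{1,\chi_-}=\widetilde{a}I-b\chi_- W_\alpha$ I verify \eqref{eq:FO-1}: hypothesis \eqref{eq:FO-aux-2} together with $|a(\widetilde{\tau}_-)|>0$ gives $\inf_t|\widetilde{a}(t)|>0$; since $\widetilde{a}=a$ and $\chi_-=1$ on a neighborhood of $\tau_-$, we have $L_*(\tau_-;\widetilde{a},b\chi_-,\alpha)=L_*(\tau_-;a,b,\alpha)>0$ from \eqref{eq:FO-aux-1}; and since $b\chi_-$ vanishes on a neighborhood of $\tau_+$, $L_*(\tau_+;\widetilde{a},b\chi_-,\alpha)=|a(\widetilde{\tau}_-)|>0$. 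Condition \eqref{eq:FO-2} for $A_{2,\chi_+}=a\chi_+ I-\widetilde{b}W_\alpha$ is verified symmetrically, using the boundedness of $\log\alpha'$ to ensure that $(\alpha'(t))^{-1/p}$ is bounded below by a positive constant, so that vanishing of $a\chi_+$ near $\tau_-$ yields $L^*(\tau_-;a\chi_+,\widetilde{b},\alpha)<0$, while $L^*(\tau_+;a\chi_+,\widetilde{b},\alpha)=L^*(\tau_+;a,b,\alpha)<0$ by \eqref{eq:FO-aux-1}. Theorem~\ref{th:FO} then provides the convergent Neumann series
\[
A_{1,\chi_-}^{-1}=\sum_{n=0}^\infty\big(\widetilde{a}^{-1}b\chi_- W_\alpha\big)^n\widetilde{a}^{-1}I,
\quad
A_{2,\chi_+}^{-1}=-W_\alpha^{-1}\sum_{n=0}^\infty\big(\widetilde{b}^{-1}a\chi_+ W_\alpha^{-1}\big)^n\widetilde{b}^{-1}I.
\]

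To derive \eqref{eq:FO-aux-5} I would fix $\varphi\in L^p(\mR_+)$ and set $f:=a\chi_\gamma\varphi$, $g_1:=A_{1,\chi_-}^{-1}f$, $g_2:=A_{2,\chi_+}^{-1}f$. Each application of $b\chi_- W_\alpha$ moves support from $\alpha_{-k}(\gamma)$ into $\alpha_{-k-1}(\gamma)$, so the first series spreads $\operatorname{supp}f\subset\gamma$ only towards $\tau_-$, giving $\operatorname{supp}g_1\subset\gamma\cup\gamma_-$; symmetrically $\operatorname{supp}g_2\subset\gamma_+$, which is disjoint from the interior of $\gamma$. For $t$ in the interior of $\gamma$ the iterates $\alpha_n(t)$ with $n\ne 0$ leave $\gamma$, so only the $n=0$ term of the first series contributes, yielding $g_1(t)=\widetilde{a}(t)^{-1}f(t)=\varphi(t)$ (since $\widetilde{a}=a$ and $\chi_\gamma=1$ on $\gamma$), while $g_2(t)=0$; this gives $\widetilde{\chi}_\gamma\Pi_r=\widetilde{\chi}_\gamma I$. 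For $A\Pi_r=0$ I would use the decompositions
\[
A-A_{1,\chi_-}=(a-\widetilde{a})I-b\chi_+ W_\alpha,
\qquad
A-A_{2,\chi_+}=a\chi_- I-(b-\widetilde{b})W_\alpha.
\]
On $\operatorname{supp}g_1\subset\gamma\cup\gamma_-$ one has $a=\widetilde{a}$, and $\operatorname{supp}(W_\alpha g_1)\subset\alpha^{-1}(\gamma\cup\gamma_-)=\gamma_-$, where $\chi_+=0$; hence $(A-A_{1,\chi_-})g_1=0$ and $Ag_1=f$. Symmetrically, on $\operatorname{supp}g_2\subset\gamma_+$ we have $\chi_-=0$, while on $\operatorname{supp}(W_\alpha g_2)\subset\alpha^{-1}(\gamma_+)=\gamma\cup\gamma_+$ we have $b=\widetilde{b}$; hence $Ag_2=A_{2,\chi_+}g_2=f$, and $A(g_1-g_2)=0$.

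Part (b) proceeds by the mirror argument: I would verify \eqref{eq:FO-2} for $A_{1,\chi_+\circ\alpha}$ and \eqref{eq:FO-1} for $A_{2,\chi_-}$ from \eqref{eq:FO-aux-6}, and then mimic the same support bookkeeping on the corresponding one-sided Neumann series to obtain the left-sided identities $\Pi_l A=0$ and $\Pi_l\widetilde{\chi}_\gamma I=\widetilde{\chi}_\gamma I$. The principal obstacle I anticipate is not any deep analytic step but rather the combinatorial match-up: verifying at each of the two endpoints $0$ and $\infty$ simultaneously that the modified coefficients fall into exactly the right branch of the dichotomy \eqref{eq:FO-1}/\eqref{eq:FO-2} of Theorem~\ref{th:FO}, and checking carefully that the Neumann series collapse to the $n=0$ term on the interior of $\gamma$ so that $\Pi_r$ and $\Pi_l$ act as genuine projections onto the relevant one-sided kernels of $A$.
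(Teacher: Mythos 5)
Your proposal takes essentially the same route as the paper's proof: invertibility of $A_{1,\chi_-}$ and $A_{2,\chi_+}$ is obtained from Theorem~\ref{th:FO} by checking the dichotomy \eqref{eq:FO-1}/\eqref{eq:FO-2} at both fixed points exactly as you describe, and the identities \eqref{eq:FO-aux-5} are then read off from the Neumann series applied to $a\chi_\gamma\varphi$ by tracking supports along the orbit $\{\alpha_n(\gamma)\}$. Your pointwise identities $(A-A_{1,\chi_-})g_1=0$ and $(A-A_{2,\chi_+})g_2=0$ are just a function-by-function restatement of the paper's operator relations \eqref{eq:FO-aux-9}, so the argument is correct and complete.
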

\begin{proof}
(a) The idea of the proof is borrowed from \cite[Lemma~3]{K84}. Clearly,
the functions defined by \eqref{eq:FO-aux-3} belong to $SO(\mR_+)$.
From \eqref{eq:FO-aux-1}--\eqref{eq:FO-aux-3} it follows that
\[
\inf_{t\in\mR_+}|\widetilde{a}(t)|>0,
\quad
\inf_{t\in\mR_+}|\widetilde{b}(t)|>0,
\quad
L_*(\tau_\pm;\widetilde{a},b\chi_-,\alpha)>0,
\quad
L^*(\tau_\pm;a\chi_+,\widetilde{b},\alpha)<0.
\]
By Theorem~\ref{th:FO}, the operators \eqref{eq:FO-aux-4}
are invertible on the space $L^p(\mR_+)$, and
\begin{equation}\label{eq:FO-aux-8}
A_{1,\chi_-}^{-1}=\sum_{n=0}^\infty (\widetilde{a}^{-1}b\chi_-W_\alpha)^n
\widetilde{a}^{-1}I,
\
A_{2,\chi_+}^{-1}=-W_\alpha^{-1}\sum_{n=0}^\infty (\widetilde{b}^{-1}a\chi_+
W_\alpha^{-1})^n \widetilde{b}^{-1}I.
\end{equation}
Further, in view of \eqref{eq:FO-aux-3}, we get the relations
\begin{equation}\label{eq:FO-aux-9}
\begin{aligned}
&
(aI-bW_\alpha)W_\alpha^n\chi_\gamma I=(\widetilde{a}I-b\chi_-W_\alpha)
W_\alpha^n\chi_\gamma I & (n\in\mN\cup\{0\}),
\\
&
(aI-bW_\alpha)(W_\alpha^{-1})^n\chi_\gamma I=(a\chi_+I-\widetilde{b}W_\alpha)
(W_\alpha^{-1})^n \chi_\gamma I & (n\in\mN).
\end{aligned}
\end{equation}
Applying \eqref{eq:FO-aux-8} and \eqref{eq:FO-aux-9} we infer that
\begin{align*}
AA_{1,\chi_-}^{-1}a\chi_\gamma I
&=A_{1,\chi_-}A_{1,\chi_-}^{-1}a\chi_\gamma I=a\chi_\gamma I,
\\
AA_{2,\chi_+}^{-1}a\chi_\gamma I
&=A_{2,\chi_+}A_{2,\chi_+}^{-1}a\chi_\gamma I=a\chi_\gamma I,
\end{align*}
whence $A\Pi_r=A(A_{1,\chi_-}^{-1}-A_{2,\chi_+}^{-1})a\chi_\gamma I=0$. On
the other hand, since
\[
\widetilde{\chi}_\gamma W_\alpha^n\chi_\gamma I=0,\quad
\widetilde{\chi}_\gamma(W_\alpha^{-1})^n\chi_\gamma I=0\quad(n\in\mN),
\]
we deduce from \eqref{eq:FO-aux-3} that
\begin{align*}
\widetilde{\chi}_\gamma\Pi_r
&=
\widetilde{\chi}_\gamma\sum_{n=0}^\infty
(\widetilde{a}^{-1}b\chi_-W_\alpha)^n\widetilde{a}^{-1}a\chi_\gamma I +\widetilde{\chi}_\gamma W_\alpha^{-1}
\sum_{n=0}^\infty(\widetilde{b}^{-1}a\chi_+W_\alpha^{-1})^n \widetilde{b}^{-1}a\chi_\gamma I
\\
&=
\widetilde{\chi}_\gamma\chi_\gamma I=\widetilde{\chi}_\gamma I,
\end{align*}
which completes the proof of \eqref{eq:FO-aux-5}. Part (a) is proved.

The proof of part (b) is similar and therefore is omitted.
\QED
\end{proof}
\subsection{Invertibility of functional operators with multiplicative shifts}
\begin{lemma}\label{le:binom-mult}
Let $a,b\in SO(\mR_+)$ and $\alpha:\mR_+\to\mR_+$ be a multiplicative shift
given by $\alpha(t)=kt$ for all $t\in\mR_+$ with some $k\in\mR_+$. The
following statements are equivalent:
\begin{enumerate}
\item[{\rm(i)}]
the functional operator $aI-bW_\alpha$ is invertible on the space $L^p(\mR_+)$;
\item[{\rm(ii)}]
the coset $aI-bW_\alpha+\cJ_{+\infty}$ is invertible in the quotient algebra $\Lambda_{+\infty}$;
\item[{\rm(iii)}]
the coset $aI-bW_\alpha+\cJ_{-\infty}$ is invertible in the quotient algebra $\Lambda_{-\infty}$.
\end{enumerate}
\end{lemma}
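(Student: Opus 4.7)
The implication (i)$\Rightarrow$(ii), and analogously (i)$\Rightarrow$(iii), is immediate: the operator $A:=aI-bW_\alpha$ lies in $\cF\subset\Lambda$ by Theorem~\ref{th:embeddings}, and the identity $[A^{-1},C]=-A^{-1}[A,C]A^{-1}$ together with $[A,C]\in\cK$ for every $C\in\cZ$ shows that $A^{-1}\in\Lambda$, so $A^{-1}+\cJ_{\pm\infty}$ inverts the coset $A+\cJ_{\pm\infty}$ in $\Lambda_{\pm\infty}$.

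For (ii)$\Rightarrow$(i) the plan is to apply Theorem~\ref{th:inv-quotient-algebra} with $\fA=\Lambda$ and $\fJ=\cJ_{+\infty}$, using a sequence of modulations $\cE_\mu^{+\infty}=\{E_{\mu_n}\}_{n=1}^\infty$ under which $W_\alpha$ is preserved. A direct calculation yields $E_{\mu_n}^{-1}W_\alpha E_{\mu_n}=k^{i\mu_n}W_\alpha$, so for $k\ne 1$ one takes $\mu_n:=2\pi n/\log k$ (or $-2\pi n/\log k$ when $k<1$) to obtain a test sequence $\mu_n\to+\infty$ with $k^{i\mu_n}=1$; for $k=1$ any $\mu_n\to+\infty$ works. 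Combined with Lemma~\ref{le:LO-modulations} this forces $A_{\cE_\mu^{+\infty}}=A$.

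The essential point, and the main obstacle, is to show that every $J\in\cJ_{+\infty}$ satisfies $J_{\cE_\mu^{+\infty}}=0$. Since $P_-\in\cZ$ is central in $\Lambda$ modulo $\cK$, a short computation (using $[A,P_-]\in\cK$ to commute $P_-$ to the right in each monomial) confirms that $\cJ_{+\infty}$ coincides with the norm closure of the set $\{CP_-+K:C\in\Lambda,\ K\in\cK\}$. In view of Lemma~\ref{le:LO-compact-modulations} and Lemma~\ref{le:LO-properties}(b),(c), it thus suffices to prove $(CP_-)_{\cE_\mu^{+\infty}}=0$ for every $C\in\Lambda$. The primal convergence on $L^p(\mR_+)$ is routine: factor $E_{\mu_n}^{-1}CP_-E_{\mu_n}=(E_{\mu_n}^{-1}CE_{\mu_n})(E_{\mu_n}^{-1}P_-E_{\mu_n})$, note that the first factor is uniformly bounded by $\|C\|$ while the second tends strongly to $0$ by Lemma~\ref{le:LO-modulations}, and the product inherits strong convergence to $0$ from the estimate $\|XYf\|\le\|X\|\,\|Yf\|$. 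The delicate step is the convergence of the adjoints on $L^q(\mR_+)$: the analogous factorisation of $(E_{\mu_n}^{-1}CP_-E_{\mu_n})^*=E_{\mu_n}P_+C^*E_{-\mu_n}$ places the strongly null factor on the \emph{right}, where boundedness alone does not suffice. The remedy is to commute past $C^*$: since $P_-^*=P_+$ in the bilinear duality and $[P_+,C^*]=-[P_-,C^*]\in\cK(L^q)$ (the $L^q$-adjoint of $[P_+,C]\in\cK(L^p)$), one writes $(CP_-)^*=P_+C^*=C^*P_++K'$ with $K'\in\cK(L^q)$. The compact summand is handled by Lemma~\ref{le:LO-compact-modulations} on $L^q$, while in $E_{\mu_n}C^*P_+E_{-\mu_n}=(E_{\mu_n}C^*E_{-\mu_n})(E_{\mu_n}P_+E_{-\mu_n})$ the strongly null factor (by Lemma~\ref{le:LO-modulations} applied at $s=-\infty$, since $-\mu_n\to-\infty$) is now on the left and the boundedness estimate applies. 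Theorem~\ref{th:inv-quotient-algebra} then yields that $A=A_{\cE_\mu^{+\infty}}$ is invertible on $L^p(\mR_+)$.

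The proof of (iii)$\Rightarrow$(i) is symmetric: choose $\mu_n\to-\infty$ with $k^{i\mu_n}=1$, so that Lemma~\ref{le:LO-modulations} gives $S_{\cE_\mu^{-\infty}}=-I$ and hence $(P_+)_{\cE_\mu^{-\infty}}=0$, and apply the same commutator trick with the roles of $P_-$ and $P_+$ interchanged.
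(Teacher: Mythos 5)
Your proposal is correct and follows essentially the same route as the paper: for (ii)$\Rightarrow$(i) the paper uses exactly the test sequence $\mu_n=2\pi n|\log k|^{-1}$ (resp.\ $2\pi n$ for $k=1$) so that $k^{i\mu_n}=1$ and $E_{\mu_n}^{-1}W_\alpha E_{\mu_n}=W_\alpha$, computes the limit operator of $aI-bW_\alpha$ via Lemmas~\ref{le:LO-modulations} and \ref{le:LO-compact-modulations}, and invokes Theorem~\ref{th:inv-quotient-algebra}; your (i)$\Rightarrow$(ii) via $[A^{-1},C]=-A^{-1}[A,C]A^{-1}$ is a harmless variant of the paper's Fredholmness argument through Lemma~\ref{le:alg-Lambda}. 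You are in fact more careful than the paper on the one genuinely delicate point, namely why $J_{\cE_\mu^{+\infty}}=0$ for \emph{every} $J\in\cJ_{+\infty}$ (the paper only remarks that $(P_-)_{\cE_\nu^{+\infty}}=0$, whereas a product rule cannot be applied directly since $C_\cU$ need not exist for arbitrary $C\in\Lambda$): your normalisation of the ideal to $\clos\{CP_-+K\}$ using $P_-\in\cZ$, and the compact-commutator trick to repair the order reversal in the adjoint, is exactly the right way to fill this in. The only blemish is verbal: in your discussion of the adjoint you twice swap ``left'' and ``right'' relative to the displayed factorisations (the estimate $\|XYf\|\le\|X\|\,\|Yf\|$ needs the strongly null factor in the position applied \emph{first}, i.e.\ written on the right, which is where your final formula correctly puts it), so the formulas are right even though the accompanying prose describes them backwards.
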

\begin{proof}
(i) $\Rightarrow$ (ii). Clearly, if $A:=aI-bW_\alpha$ is invertible on
$L^p(\mR_+)$, then it is Fredholm. Moreover, $A\in\Lambda$ by
Theorem~\ref{th:embeddings}. Then from Lemma~\ref{le:alg-Lambda}
we see that there exists $B\in\Lambda$ such that $AB-I\in\cK$ and
$BA-I\in\cK$. Since the ideal $\cJ_{+\infty}$ contains $\cK$,
the latter relations imply that the coset $A+\cJ_{+\infty}$ is invertible in
the quotient algebra $\Lambda_{+\infty}$ and $B+\cJ_{+\infty}$ is its inverse.

(ii) $\Rightarrow$ (i). Consider the test sequence $\nu:=\{\nu_n\}_{n=1}^\infty\subset\mR$
relative to $+\infty$ and given by
\[
\nu_n:=\left\{\begin{array}{lll}
2\pi n|\log k|^{-1} &\mbox{if} &k\ne 1, \\
2\pi n &\mbox{if} & k=1
\end{array}\right.
\quad(n\in\mN).
\]
Then for every $n\in\mN$, $f\in L^p(\mR_+)$, and $t\in\mR_+$,
\[
(E_{\nu_n}^{-1}W_\alpha E_{\nu_n}f)(t)
=
t^{-i\nu_n}(kt)^{i\nu_n}f(kt)=k^{i\nu_n}f(kt)=(W_\alpha f)(t)
\]
because $k^{i\nu_n}=1$. That is, $E_{\nu_n}^{-1}W_\alpha E_{\nu_n}=W_\alpha$.
Thus, the limit operator $(W_\alpha)_{\cE_\nu^{+\infty}}$ with respect to the
sequence of pseudoisometries $\cE_\nu^{+\infty}:=\{E_{\nu_n}\}_{n=1}^\infty$
exists and
\begin{equation}\label{eq:binom-mult-1}
(W_\alpha)_{\cE_\nu^{+\infty}}=W_\alpha.
\end{equation}
From Lemma~\ref{le:LO-modulations} we obtain
\begin{equation}\label{eq:binom-mult-2}
(aI)_{\cE_\nu^{+\infty}}=aI,
\quad
(bI)_{\cE_\nu^{+\infty}}=bI,
\quad
S_{\cE_\nu^{+\infty}}=I.
\end{equation}
On the other hand, by Lemma~\ref{le:LO-compact-modulations},
\begin{equation}\label{eq:binom-mult-3}
K_{\cE_\nu^{+\infty}}=0
\quad\mbox{for every}\quad K\in\cK.
\end{equation}
Combining \eqref{eq:binom-mult-1}--\eqref{eq:binom-mult-3}
with Lemma~\ref{le:LO-properties}(b)--(c), we see that
\begin{equation}\label{eq:binom-mult-4}
(aI-bW_\alpha)_{\cE_\nu^{+\infty}}=aI-bW_\alpha
\end{equation}
and $J_{\cE_\nu^{+\infty}}=0$ for all $J\in\cJ_{+\infty}$ (recall that
the ideal $\cJ_{+\infty}$ is generated by $\cK$ and $P_-=(I-S)/2$,
so $(P_-)_{\cE_\nu^{+\infty}}=0$).

Since the coset $aI-bW_\alpha+\cJ_{+\infty}$ is invertible
in the quotient algebra $\Lambda_{+\infty}$, we deduce from
Theorem~\ref{th:inv-quotient-algebra} that the limit
operator \eqref{eq:binom-mult-4} is invertible. This finishes
the proof of the implication (ii) $\Rightarrow$ (i).
Thus, the equivalence (i) $\Leftrightarrow$ (ii) is proved.

The proof of the equivalence (i) $\Leftrightarrow$ (iii) is analogous.
\QED
\end{proof}
\section{Necessary conditions for Fredholmness}\label{sec:necessity}
\subsection{Necessity of condition (i)}
In this subsection we prove that condition (i) in Theorem~\ref{th:sufficiency}
is necessary for the Fredholmness of the operator $N$. We start
with the following auxiliary result.
\begin{lemma}\label{le:compactness}
Let the functions $\chi_\gamma$ and $\widetilde{\chi}_\gamma$ be as in
Section~{\rm\ref{subsec:FO-aux}}, and $n\in\mZ$.
\begin{enumerate}
\item[{\rm (a)}]
The operators $(\chi_\gamma\circ\alpha_n) P_- P_+$ and
$P_- P_+(\chi_\gamma\circ\alpha_n)I$ are compact.

\item[{\rm(b)}]
The operators $\widetilde{\chi}_\gamma P_\pm$ and $P_\pm\widetilde{\chi}_\gamma I$
are not compact.
\end{enumerate}
\end{lemma}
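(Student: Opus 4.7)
The identity $4P_-P_+=-R^2$ from \eqref{eq:S-R-relation} reduces the claim to showing that $\chi R^2$ and $R^2\chi I$ are compact on $L^p(\mR_+)$, where $\chi:=\chi_\gamma\circ\alpha_n$. The support of $\chi$ is the interval $\alpha_{-n}(\gamma)$, a compact subinterval of $\mR_+$ bounded away from $0$ and $\infty$ (since $\alpha_n$ is a diffeomorphism of $\mR_+$). I would pick a continuous $g$ with compact support $[a,b]\subset\mR_+$ and $g\equiv 1$ on $\operatorname{supp}\chi$; then $\chi g=\chi$, so
\[
\chi R^2=\chi\,(gR)\,R,\qquad R^2\chi I=R\,(Rg)\,\chi I,
\]
and it suffices to prove compactness of $gR$ and $Rg$ on $L^p(\mR_+)$.

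\textbf{Compactness of $gR$ and $Rg$.} For a bounded sequence $\{f_k\}\subset L^p(\mR_+)$, reflexivity of $L^p$ yields a weakly convergent subsequence $f_{k_j}\rightharpoonup f_\infty$. For every fixed $t>0$, the function $\tau\mapsto 1/(\pi i(\tau+t))$ lies in $L^q(\mR_+)$, so $(Rf_{k_j})(t)\to(Rf_\infty)(t)$; multiplying by $g(t)$ gives pointwise convergence of $gRf_{k_j}$. A direct H\"older estimate produces the uniform bound $|gRf_{k_j}(t)|\le C\chi_{[a,b]}(t)$, an $L^p$-dominator, and dominated convergence delivers norm convergence. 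For $Rg$, set $h_k:=gf_k$, which is bounded in $L^p$ with support in $[a,b]$; a further weak-subsequence argument combined with the bound $|Rgf_{k_j}(t)|\le C/(a+t)\in L^p(\mR_+)$ (valid because $p>1$) gives the same conclusion.

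\textbf{Plan for part (b).} Argue by contradiction using limit operators under modulations. Multiplication operators commute with every $E_{\mu_n}$, so $E_{\mu_n}^{-1}\widetilde{\chi}_\gamma I E_{\mu_n}=\widetilde{\chi}_\gamma I$ identically, giving $(\widetilde{\chi}_\gamma I)_{\cE_\mu^{\pm\infty}}=\widetilde{\chi}_\gamma I$ for every test sequence $\mu$. Combining this with the formulas $(P_\pm)_{\cE_\mu^{\pm\infty}}=I$ from Lemma~\ref{le:LO-modulations} and the multiplicativity provided by Lemma~\ref{le:LO-properties}(b), I obtain
\[
(\widetilde{\chi}_\gamma P_\pm)_{\cE_\mu^{\pm\infty}}=\widetilde{\chi}_\gamma I,\qquad (P_\pm\widetilde{\chi}_\gamma I)_{\cE_\mu^{\pm\infty}}=\widetilde{\chi}_\gamma I,
\]
both nonzero since $\widetilde{\chi}_\gamma$ has nonempty support in $\gamma$. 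Lemma~\ref{le:LO-compact-modulations} forces the limit operator of any compact operator with respect to $\cE_\mu^{\pm\infty}$ to vanish, and the contradiction shows that none of the four operators is compact.

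\textbf{Main obstacle.} The delicate step is the $L^p$-compactness of $gR$ and $Rg$ in part (a). On $L^2$ both operators are Hilbert--Schmidt by inspection of the kernel, but the extension to arbitrary $p\in(1,\infty)$ needs the reflexivity-plus-dominated-convergence scheme above. An equivalent alternative would be to transport everything to $L^p(\mR)$ via $E\Phi$, turning the products into multiplication by a compactly supported function composed with a Fourier convolution whose symbol $r_p$ decays exponentially, a setting in which compactness is classical; the elementary kernel argument has the advantage of staying on $\mR_+$ and avoiding further machinery.
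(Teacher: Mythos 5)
Your proof is correct, but it follows a genuinely different route from the paper's on both halves. For part (a) the paper does not prove compactness from scratch: it observes that $P_-P_+\in\operatorname{id}_\cA\{R\}$ via \eqref{eq:S-R-relation}, inserts a continuous cut-off $c\circ\alpha_n$ vanishing at $0$ and $\infty$, and then simply cites \cite[Corollary~6.6]{KKLsufficiency} for the compactness of $(c\circ\alpha_n)P_-P_+$, handling the reversed order $P_-P_+(\chi_\gamma\circ\alpha_n)I$ with the commutator result of Theorem~\ref{th:compactness-commutators}. You instead reduce via $4P_-P_+=-R^2$ to the compactness of $gR$ and $Rg$ for a compactly supported continuous $g$ and prove that directly by the weak-compactness-plus-dominated-convergence scheme; this is sound (the pointwise convergence from the $L^q$ kernel and the dominators $C\chi_{[a,b]}$ and $C(a+t)^{-1}\in L^p$ are exactly right), and it buys self-containedness at the cost of reproving a fact the companion paper already supplies. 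For part (b) the paper is a one-line citation to \cite[Theorem~4.1(c)]{RS90}, whereas your argument — that $\widetilde{\chi}_\gamma I$ commutes with every modulation $E_{\mu_n}$, so by Lemmas~\ref{le:LO-modulations} and \ref{le:LO-properties}(b) the limit operators of $\widetilde{\chi}_\gamma P_\pm$ and $P_\pm\widetilde{\chi}_\gamma I$ with respect to $\cE_\mu^{\pm\infty}$ equal $\widetilde{\chi}_\gamma I\neq 0$, contradicting Lemma~\ref{le:LO-compact-modulations} if these operators were compact — is an attractive internal alternative that uses only machinery already set up in Section~\ref{sec:LO}. Both of your arguments stand; just note that your $L^2$ Hilbert--Schmidt remark is an aside and that the $p\neq 2$ case genuinely needs the reflexivity argument you give.
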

\begin{proof}
(a) In view of \eqref{eq:S-R-relation}, we have
$P_- P_+\in\operatorname{id}_\cA\{R\}$. Let $c$ be
a continuous function equal $1$ on the support of
$\chi_\gamma\circ\alpha_n$ and vanishing at $0$ and $\infty$.
Then the operator $(c\circ\alpha_n)P_- P_+$ is compact
by \cite[Corollary~6.6]{KKLsufficiency}. Therefore the operator
$(\chi_\gamma\circ\alpha_n)P_- P_+=(\chi_\gamma\circ\alpha_n)(c\circ\alpha_n)P_- P_+$
is also compact. The compactness of
$P_- P_+(\chi_\gamma\circ\alpha_n)I$ is proved analogously
with the aid of Theorem~\ref{th:compactness-commutators}.
Part (a) is proved.
Part (b) follows from \cite[Theorem~4.1(c)]{RS90}.
\QED
\end{proof}
\begin{lemma}\label{le:nec-1}
Suppose $a,b,c,d\in SO(\mR_+)$, $\alpha\in SOS(\mR_+)$, and the operator $N$
is given by \eqref{eq:def-N}.
\begin{enumerate}
\item[{\rm (a)}]
If $N$ is Fredholm, then either
\begin{equation}\label{eq:nec-1-1}
L_*(0;a,b,\alpha)>0,
\quad
L_*(\infty;a,b,\alpha)>0;
\end{equation}
or
\begin{equation}\label{eq:nec-1-2}
L^*(0;a,b,\alpha)<0,
\quad
L^*(\infty;a,b,\alpha)<0.
\end{equation}

\item[{\rm (b)}]
If $N$ is Fredholm, then either
\begin{equation}\label{eq:nec-1-3}
L_*(0;c,d,\alpha)>0,
\quad
L_*(\infty;c,d,\alpha)>0;
\end{equation}
or
\begin{equation}\label{eq:nec-1-4}
L^*(0;c,d,\alpha)<0,
\quad
L^*(\infty;c,d,\alpha)<0.
\end{equation}
\end{enumerate}
\end{lemma}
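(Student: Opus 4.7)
The plan is to argue by contradiction, assuming $N$ is Fredholm but part~(a) fails: neither \eqref{eq:nec-1-1} nor \eqref{eq:nec-1-2} holds for the pair $(a,b)$. First I would extract pointwise invertibility from the Fredholmness of $N$ via limit operators with respect to dilations. By Lemma~\ref{le:LO-dilations}, for each $s\in\{0,\infty\}$ and $\xi\in M_s(SO(\mR_+))$ there exists a test sequence $h^\xi$ such that
\[
N_{\cV_{h^\xi}^s} = \big(a(\xi)I-b(\xi)W_{\alpha_\xi}\big)P_+ + \big(c(\xi)I-d(\xi)W_{\alpha_\xi}\big)P_-
\]
with the \emph{multiplicative} shift $\alpha_\xi(t)=e^{\omega(\xi)}t$. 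Since $N\in\cF\subset\Lambda$ is Fredholm (Theorem~\ref{th:embeddings}, Lemma~\ref{le:alg-Lambda}), combining Lemma~\ref{le:LO-compact-dilations} with Theorem~\ref{th:inv-quotient-algebra} (applied to $\fJ=\cK$) shows $N_{\cV_{h^\xi}^s}$ is invertible on $L^p(\mR_+)$. Lemma~\ref{le:lifting}(a) then reduces the invertibility of the Calkin coset of this limit operator modulo $\cJ_{+\infty}^\pi$ to invertibility of $A_\xi+\cJ_{+\infty}$ in $\Lambda_{+\infty}$, where $A_\xi:=a(\xi)I-b(\xi)W_{\alpha_\xi}$, and Lemma~\ref{le:binom-mult} --- valid because $\alpha_\xi$ is multiplicative --- upgrades this to invertibility of $A_\xi$ on $L^p(\mR_+)$; the parallel scheme via Lemma~\ref{le:lifting}(b) yields invertibility of $C_\xi$. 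Identifying $\alpha_\xi'\equiv\alpha'(\xi)=e^{\omega(\xi)}$ by Lemma~\ref{le:SOS-derivative} and applying Theorem~\ref{th:FO} to the constant-coefficient operator $A_\xi$ finally reduces invertibility of $A_\xi$ to
\[
f(\xi):=|a(\xi)|-|b(\xi)|\big(\alpha'(\xi)\big)^{-1/p}\neq 0, \qquad \xi\in\Delta.
\]

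By Lemma~\ref{le:connected-fibers} each fiber $M_s(SO(\mR_+))$ is connected, so the continuous function $f$ on $M(SO(\mR_+))$ has a well-defined constant sign $\sigma_s\in\{+,-\}$ on each. Lemma~\ref{le:SO-fundamental-property} together with compactness translates $\sigma_s=+$ into $L_*(s;a,b,\alpha)>0$ and $\sigma_s=-$ into $L^*(s;a,b,\alpha)<0$. If $\sigma_0=\sigma_\infty$ one of \eqref{eq:nec-1-1}, \eqref{eq:nec-1-2} follows, so the only remaining obstruction is the mixed case $\sigma_0\neq\sigma_\infty$.

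Suppose without loss of generality that $\sigma_0=+$, $\sigma_\infty=-$. After fixing the orbit orientation so that $\tau_-=0$, $\tau_+=\infty$ (replacing $\alpha$ with $\beta$ if necessary), hypothesis \eqref{eq:FO-aux-1} holds. I would verify the more delicate \eqref{eq:FO-aux-2} by choosing $\tau$ sufficiently small that $\gamma\cup\gamma_-\subset(0,\delta_0]$ lies in a neighborhood of $0$ on which $|a|\geq L_*(0;a,b,\alpha)/2>0$; the companion bound $\inf_{\gamma\cup\gamma_+}|b|>0$ near $\infty$ is the technically most delicate point and is handled by the parallel analysis. Granted this, Lemma~\ref{le:FO-aux}(a) produces a right nullifier $\Pi_r$ of $A:=aI-bW_\alpha$ with $A\Pi_r=0$ and $\widetilde\chi_\gamma\Pi_r=\widetilde\chi_\gamma I$ for any cut-off $\widetilde\chi_\gamma\in C(\mR_+)$ supported in $\gamma$. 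The contradiction is now extracted by proving $NP_+\Pi_r\in\cK$ while $P_+\Pi_r\notin\cK$. Writing $P_\pm=(I\pm S)/2$ and using $[A,S]\in\cK$ from Theorem~\ref{th:compactness-commutators} gives $AP_+\Pi_r=AS\Pi_r/2=[A,S]\Pi_r/2\in\cK$; moreover $A_{1,\chi_-}^{-1},A_{2,\chi_+}^{-1}\in\cF\subset\Lambda$ commute modulo $\cK$ with $R^2=-4P_-P_+\in\cZ$, and Lemma~\ref{le:compactness}(a) yields $P_-P_+\chi_\gamma I\in\cK$, so pushing commutators through gives $P_-P_+\Pi_r\in\cK$ and hence $CP_-P_+\Pi_r\in\cK$. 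On the other hand, by Theorem~\ref{th:compactness-commutators}, $\widetilde\chi_\gamma P_+\Pi_r\equiv P_+\widetilde\chi_\gamma I\pmod{\cK}$, which is non-compact by Lemma~\ref{le:compactness}(b). Composing with any Fredholm regularizer $M$ of $N$, the identity $P_+\Pi_r=M(NP_+\Pi_r)-(MN-I)P_+\Pi_r$ forces $P_+\Pi_r\in\cK$, a contradiction. The symmetric mixed case $\sigma_0=-,\sigma_\infty=+$ is ruled out by Lemma~\ref{le:FO-aux}(b) and the dual computation with the left nullifier $\Pi_l$, and part~(b) of the lemma follows by running the same argument throughout with $(c,d,P_-)$ in place of $(a,b,P_+)$.
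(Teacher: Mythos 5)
Your overall architecture coincides with the paper's: extract invertibility of the limit operators $N_{\cV_{h^\xi}^s}$ via dilations and Theorem~\ref{th:inv-quotient-algebra}, pass through Theorem~\ref{th:localization-realization}, Lemma~\ref{le:lifting} and Lemma~\ref{le:binom-mult} to get invertibility of $a(\xi)I-b(\xi)W_{\alpha_\xi}$, apply Theorem~\ref{th:FO} and the connectedness of the fibers to get a constant sign on each fiber, and then kill the mixed case with the one-sided nullifiers $\Pi_r$, $\Pi_l$ of Lemma~\ref{le:FO-aux} together with Lemma~\ref{le:compactness}. Your compactness bookkeeping at the end (writing $A_+P_+\Pi_r=\tfrac12[A_+,S]\Pi_r$ and commuting $P_-P_+$ through $A_{1,\chi_-}^{-1}-A_{2,\chi_+}^{-1}\in\cF\subset\Lambda$ rather than expanding the Neumann series termwise) is correct and in fact slightly cleaner than the paper's.

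However, there is a genuine gap exactly at the point you flag as ``the technically most delicate point'': the verification of hypothesis \eqref{eq:FO-aux-2}. Note that $\gamma\cup\gamma_-=(\tau_-,\alpha(\tau)]$ and $\gamma\cup\gamma_+=[\tau,\tau_+)$, so their union is all of $\mR_+$. Consequently \eqref{eq:FO-aux-2} demands that at \emph{every} point of $\mR_+$ at least one of $|a|,|b|$ is bounded away from zero; no choice of $\tau$ can arrange this, because shrinking $\tau$ to confine $\gamma\cup\gamma_-$ to a small neighborhood of $0$ (where $L_*(0;a,b,\alpha)>0$ controls $|a|$) simultaneously inflates $\gamma\cup\gamma_+=[\tau,\infty)$ to nearly all of $\mR_+$, on whose compact middle part $b$ may well vanish. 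There is no ``parallel analysis'' that repairs this. The paper's proof closes the gap differently: it fixes neighborhoods $u(\tau_\pm)$ where $|a|$ (resp.\ $|b|$) is already separated from zero by \eqref{eq:FO-aux-1}, takes $\gamma$ in the complementary compact set, and then performs an arbitrarily small $L^\infty$-perturbation of $a,b$ supported in $\mR_+\setminus(u(\tau_-)\cup u(\tau_+))$ to force \eqref{eq:FO-aux-2}; the perturbation is small enough to preserve the Fredholmness of $N$ and does not touch the coefficients near $\tau_\pm$, so \eqref{eq:FO-aux-1} survives. Without this perturbation step Lemma~\ref{le:FO-aux}(a) cannot be invoked and $\Pi_r$ is not defined, so your contradiction argument does not get off the ground. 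The same repair is needed in the symmetric case handled by $\Pi_l$.
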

\begin{proof}
(a)
Fix $s\in\{0,\infty\}$ and $\xi\in M_s(SO(\mR_+))$. By
Lemma~\ref{le:LO-dilations}, there exists a test sequence
$h^\xi=\{h_n^\xi\}_{n=1}^\infty\subset\mR_+$ relative to the point
$s$ such that the limit operator $N_{\cV_{h^\xi}^s}$ with respect
to the sequence of pseudoisometries
$\cV_{h^\xi}^s:=\{V_{h_n^\xi}\}_{n=1}^\infty\subset\cB$ exists and
\begin{equation}\label{eq:def-limit-N}
N_{\cV_{h^\xi}^s}= \big(a(\xi)I-b(\xi)W_{\alpha_\xi}\big)P_+ +
\big(c(\xi)I-d(\xi)W_{\alpha_\xi}\big)P_-,
\end{equation}
where $\alpha_\xi(t)=e^{\omega(\xi)}t$ is a multiplicative shift
and $\omega(t)=\log[\alpha(t)/t]$ belongs to $SO(\mR_+)$ in view
of Lemma~\ref{le:exp-repr}. From
Lemma~\ref{le:LO-compact-dilations} it follows that
$K_{\cV_{h^\xi}^s}=0$ for every $K\in\cK$. Since the operator $N$
is Fredholm, the coset $N^\pi=N+\cK$ is invertible in the quotient
algebra $\Lambda^\pi$ in view of Lemma~\ref{le:alg-Lambda}.
Applying Theorem~\ref{th:inv-quotient-algebra} with
$\fA=\Lambda$, $\fJ=\cK$, $A=N$, and $\cU=\cV_{h^\xi}^s$, we
conclude that the operator $N_{\cV_ {h^\xi}^s}$ is invertible.

Obviously, $N_{\cV_{h^\xi}^s}$ is Fredholm. Then from
Theorem~\ref{th:localization-realization} it follows that the coset
$(N_{\cV_{h^\xi}^s})^\pi+\cJ_{+\infty}^\pi$ is invertible in the quotient
algebra $\Lambda_{+\infty}^\pi$. By Lemma~\ref{le:lifting}, this is equivalent
to the invertibility of the coset $a(\xi)I-b(\xi)W_{\alpha_\xi}+\cJ_{+\infty}$
in the quotient algebra $\Lambda_{+\infty}$. Since $\alpha_\xi(t)=e^{\omega(\xi)}t$
is a multiplicative shift, from Lemma~\ref{le:binom-mult} it follows that
the above condition is equivalent to the invertibility of the operator
$a(\xi)I-b(\xi)W_{\alpha_\xi}$. Applying Theorem~\ref{th:FO} to this operator
and taking into account Lemma~\ref{le:SOS-derivative},
we obtain either
\begin{equation}\label{eq:nec-1-5}
|a(\xi)|-|b(\xi)|\big(\alpha'(\xi)\big)^{-1/p}=
|a(\xi)|-|b(\xi)|\big(e^{\omega(\xi)}\big)^{-1/p}>0
\end{equation}
or
\begin{equation}\label{eq:nec-1-6}
|a(\xi)|-|b(\xi)|\big(\alpha'(\xi)\big)^{-1/p}=
|a(\xi)|-|b(\xi)|\big(e^{\omega(\xi)}\big)^{-1/p}<0.
\end{equation}
The fibers $M_s(SO(\mR_+))$ are
connected compact Hausdorff spaces by Lemma~\ref{le:connected-fibers}.
Since $a(\xi),b(\xi)$,
and $\alpha'(\xi)$ depend continuously on $\xi\in M_s(SO(\mR_+))$, we deduce
that if $N$ is Fredholm, then for every $s\in\{0,\infty\}$
either \eqref{eq:nec-1-5} holds for all $\xi\in M_s(SO(\mR_+))$
or \eqref{eq:nec-1-6} holds for all $\xi\in M_s(SO(\mR_+))$.
Hence we conclude from Lemma~\ref{le:SO-fundamental-property}
that for each $s\in\{0,\infty\}$ either $L_*(s;a,b,\alpha)>0$
or $L^*(s;a,b,\alpha)<0$.

It remains to prove that actually either \eqref{eq:nec-1-1}
or \eqref{eq:nec-1-2} is fulfilled, that is, to show that
$L^*(0;a,b,\alpha)<0<L_*(\infty;a,b,\alpha)$ or
$L^*(\infty;a,b,\alpha)<0<L_*(0;a,b,\alpha)$
are impossible. Since either $\tau_-=0$ and $\tau_+=\infty$,
or $\tau_-=\infty$ and $\tau_+=0$, the latter inequalities
take either the form \eqref{eq:FO-aux-1}, or the form \eqref{eq:FO-aux-6}.

On the contrary, suppose \eqref{eq:FO-aux-1} is fulfilled. Then
there are open neighborhoods $u(\tau_\pm)\subset\mR_+$ of
$\tau_\pm$ such that $|a|$ is separated from zero on $u(\tau_-)$
and $|b|$ is separated from zero on $u(\tau_+)$. Take a segment $\gamma\subset
\mR_+\setminus\overline{u(\tau_-)\cup u(\tau_+)}$ with endpoints
$\tau$ and $\alpha(\tau)$. Since $N$ is Fredholm, by a small
perturbation of coefficients $a,b$ in $C\big(\mR_+
\setminus(u(\tau_-)\cup u(\tau_+))\big)$ we can achieve
the fulfillment of \eqref{eq:FO-aux-2} for perturbed coefficients
keeping the operator $N$ Fredholm. Notice that inequalities \eqref{eq:FO-aux-1}
remain valid for perturbed coefficients. Let us save notation
$a,b$ for perturbed coefficients. Then in virtue of Lemma~\ref{le:FO-aux}(a)
we obtain the operator $\Pi_r$ given by \eqref{eq:FO-aux-5}. Setting now
$A_+:=aI-bW_\alpha$ and $A_-:=cI-dW_\alpha$ and taking into account
Theorem~\ref{th:compactness-commutators}, we get
\begin{equation}\label{eq:nec-1-7}
NP_+=(A_+P_++A_-P_-)P_+\simeq P_+A_++(A_--A_+)P_-P_+,
\end{equation}
where $C\simeq D$ means that $C-D$ is a compact operator.
Recall that since $N$ is Fredholm, there is an operator $N^{(-1)}\in\cB$,
called a regularizer of $N$, such that $NN^{(-1)}\simeq N^{(-1)}N\simeq I$.
Then, applying $\Pi_r$ and $N^{(-1)}$, we infer from \eqref{eq:nec-1-7} that
\begin{equation}\label{eq:nec-1-8}
P_+\Pi_r\simeq N^{(-1)}NP_+\Pi_r\simeq N^{(-1)}P_+A_+\Pi_r+N^{(-1)}
(A_--A_+)P_-P_+\Pi_r.
\end{equation}
From Lemma~\ref{le:compactness}(a) and
\[
\Pi_r=\sum_{n=0}^\infty(\chi_\gamma\circ\alpha_n)
(a^{-1}b\chi_-W_\alpha)^n +(\chi_\gamma\circ\alpha_{-n-1})W_\alpha^{-1}
\sum_{n=0}^\infty(b^{-1}a\chi_+W_\alpha^{-1})^n b^{-1}aI
\]
we get $P_-P_+\Pi_r\simeq 0$.
On the other hand, by \eqref{eq:FO-aux-5} with $A=A_+$,
we obtain $A_+\Pi_r=0$. The latter two relations imply in view
of \eqref{eq:nec-1-8} that $P_+\Pi_r\simeq 0$. Let $\widetilde{\chi}_\gamma$
be as in Section~\ref{subsec:FO-aux}. From \eqref{eq:FO-aux-5} and
Theorem~\ref{th:compactness-commutators} we get
\[
P_+\widetilde{\chi}_\gamma I=P_+\widetilde{\chi}_\gamma\Pi_r
\simeq\widetilde{\chi}_\gamma P_+\Pi_r\simeq 0.
\]
Hence $P_+\widetilde{\chi}_\gamma I$ is a compact operator, which
is impossible due to Lemma~\ref{le:compactness}(b).

Analogously, if \eqref{eq:FO-aux-6} holds, then applying Lemma~\ref{le:FO-aux}(b) we
conclude that
\[
\Pi_l P_+\simeq \Pi_l P_+ N N^{(-1)}\simeq \Pi_l(A_+P_++P_-P_+(A_--A_+))
N^{(-1)}\simeq 0,
\]
and hence
\[
\widetilde{\chi}_\gamma P_+=\Pi_l\widetilde{\chi}_\gamma P_+
\simeq \Pi_l P_+\widetilde{\chi}_\gamma I\simeq 0,
\]
which again is impossible. Thus, either \eqref{eq:nec-1-1} or
\eqref{eq:nec-1-2} holds, and hence part  (a) is
proved. The proof of part (b) is analogous.
\QED
\end{proof}
\begin{lemma}\label{le:nec-2}
Suppose $a,b,c,d\in SO(\mR_+)$, $\alpha\in SOS(\mR_+)$, and the operator $N$
is given by \eqref{eq:def-N}.
\begin{enumerate}
\item[{\rm (a)}]
If $N$ is Fredholm and \eqref{eq:nec-1-1} is fulfilled, then
$\inf\limits_{t\in\mR_+}|a(t)|>0$.

\item[{\rm (b)}]
If $N$ is Fredholm and \eqref{eq:nec-1-2} is fulfilled, then
$\inf\limits_{t\in\mR_+}|b(t)|>0$.

\item[{\rm (c)}]
If $N$ is Fredholm and \eqref{eq:nec-1-3} is fulfilled, then
$\inf\limits_{t\in\mR_+}|c(t)|>0$.

\item[{\rm (d)}]
If $N$ is Fredholm and \eqref{eq:nec-1-4} is fulfilled, then
$\inf\limits_{t\in\mR_+}|d(t)|>0$.
\end{enumerate}
\end{lemma}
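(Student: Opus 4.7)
The plan is to prove part (a); parts (b)--(d) follow by analogous arguments, interchanging the roles of $a$ and $b$ in (b) and those of $(aI-bW_\alpha,P_+,\cJ_{+\infty})$ and $(cI-dW_\alpha,P_-,\cJ_{-\infty})$ in (c), (d). From \eqref{eq:nec-1-1} together with $|b(t)|(\alpha'(t))^{-1/p}\ge 0$ I immediately get $\liminf_{t\to s}|a(t)|\ge L_*(s;a,b,\alpha)>0$ for $s\in\{0,\infty\}$, so $|a|\ge\varepsilon_0>0$ outside some compact interval $[\delta_0,T_0]\subset\mR_+$. If the conclusion fails, continuity of $a$ on $[\delta_0,T_0]$ yields $t_0\in[\delta_0,T_0]$ with $a(t_0)=0$, and the task reduces to contradicting the Fredholmness of $N$.

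I would follow the contradiction scheme of the last part of the proof of Lemma~\ref{le:nec-1}(a): construct an operator $\Pi_r$ with $\widetilde\chi_\gamma\Pi_r=\widetilde\chi_\gamma I$ and $A_+\Pi_r$ ``approximately zero'' for a suitable cut-off $\widetilde\chi_\gamma$ concentrated near $t_0$, then combine the identity $NP_+\simeq P_+A_++(A_--A_+)P_-P_+$ with a regularizer of $N$, with Lemma~\ref{le:compactness}(a), and with Theorem~\ref{th:compactness-commutators} to conclude that $P_+\widetilde\chi_\gamma I$ is compact, contradicting Lemma~\ref{le:compactness}(b). Pick $\tau$ slightly less than $t_0$, let $\gamma=[\tau,\alpha(\tau)]$, and use the notation $\gamma_\pm$, $\chi_\pm$, $\widetilde\chi_\gamma$ from Section~\ref{subsec:FO-aux}. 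Under \eqref{eq:nec-1-1} both boundary signs are positive, so the symmetric two-sided construction of Lemma~\ref{le:FO-aux}(a) is unavailable: the operator $A_{2,\chi_+}$ satisfies neither \eqref{eq:FO-1} nor \eqref{eq:FO-2}. Instead I use a \emph{one-sided} candidate $\Pi_r:=A_{1,\chi_-}^{-1}a\chi_\gamma I$ with $A_{1,\chi_-}=\widetilde aI-b\chi_-W_\alpha$, where $\widetilde a\in SO(\mR_+)$ coincides with $a$ outside a small open neighbourhood $U$ of $t_0$ (with $\overline U\subset\operatorname{int}\gamma$) and is bounded below globally; Theorem~\ref{th:FO} then gives invertibility of $A_{1,\chi_-}$ via \eqref{eq:FO-1}, because $L_*(\tau_-;\widetilde a,b\chi_-,\alpha)=L_*(\tau_-;a,b,\alpha)>0$ by \eqref{eq:nec-1-1} (as $\widetilde a=a$ and $\chi_-=1$ near $\tau_-$) and $L_*(\tau_+;\widetilde a,b\chi_-,\alpha)=\liminf_{t\to\tau_+}|\widetilde a(t)|>0$ (as $b\chi_-\equiv 0$ near $\tau_+$). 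Choosing $\operatorname{supp}\widetilde\chi_\gamma\subset\gamma\setminus\overline U$, the identity $\widetilde\chi_\gamma\Pi_r=\widetilde\chi_\gamma I$ follows exactly as in the proof of Lemma~\ref{le:FO-aux}(a) from $\widetilde a=a$ on $\operatorname{supp}\widetilde\chi_\gamma$ and $\widetilde\chi_\gamma W_\alpha^n\chi_\gamma I=0$ for $n\ge 1$.

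The main obstacle will be replacing the exact identity $A_+\Pi_r=0$ of Lemma~\ref{le:FO-aux}(a) by a quantitative estimate. Writing $A_+=A_{1,\chi_-}+(a-\widetilde a)I-b\chi_+W_\alpha$ gives
\[
A_+\Pi_r=a\chi_\gamma I+(a-\widetilde a)A_{1,\chi_-}^{-1}a\chi_\gamma I-b\chi_+W_\alpha A_{1,\chi_-}^{-1}a\chi_\gamma I.
\]
The third summand is compact: expanding $A_{1,\chi_-}^{-1}$ as a Neumann series, each term factors through $\chi_\gamma\circ\alpha_n$ with $n\ge 1$, so Theorem~\ref{th:compactness-commutators} together with Lemma~\ref{le:compactness}(a) yields a compact contribution whose norm series converges. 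To control the first two summands I pre-multiply by $\widetilde\chi_\gamma$: the second vanishes identically since $a-\widetilde a\equiv 0$ on $\operatorname{supp}\widetilde\chi_\gamma\subset\gamma\setminus\overline U$, and the first equals $\widetilde\chi_\gamma aI$, whose operator norm is at most $\sup_{\operatorname{supp}\widetilde\chi_\gamma}|a|$, which is arbitrarily small upon shrinking $\gamma$ around $t_0$ by continuity of $a$ at $t_0$. The hard part is to push this compact-plus-small decomposition through the chase of Lemma~\ref{le:nec-1}(a) so that $P_+\widetilde\chi_\gamma I$ lies in the norm closure of the ideal of compact operators---hence is itself compact---for a single, fixed, nontrivial $\widetilde\chi_\gamma$: this requires a limiting argument run across a shrinking family of cut-offs, effectively replacing the algebraic exactness of Lemma~\ref{le:FO-aux}(a) by an approximation that exploits the interior zero $a(t_0)=0$.
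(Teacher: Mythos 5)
Your reduction to the existence of an interior zero $t_0$ of $a$ is correct and matches the paper, but the contradiction argument you sketch has a genuine gap that your own closing sentences already hint at. With the one-sided choice $\Pi_r=A_{1,\chi_-}^{-1}a\chi_\gamma I$ you lose precisely the cancellation that makes Lemma~\ref{le:FO-aux}(a) work: there the two terms $A A_{1,\chi_-}^{-1}a\chi_\gamma I=a\chi_\gamma I$ and $A A_{2,\chi_+}^{-1}a\chi_\gamma I=a\chi_\gamma I$ are subtracted, whereas your $A_+\Pi_r$ equals $a\chi_\gamma I+(a-\widetilde a)\Pi_r$ (the third summand actually vanishes identically, since $\chi_+\cdot(\chi_\gamma\circ\alpha_n)=0$ for $n\ge1$), an operator of norm comparable to $\sup_{\gamma}|a|$, which is neither small nor compact. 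Pre-multiplying by $\widetilde{\chi}_\gamma$ does not rescue this: in the chase of Lemma~\ref{le:nec-1}(a) the quantity $A_+\Pi_r$ occurs inside $\widetilde{\chi}_\gamma N^{(-1)}P_+A_+\Pi_r$, and the cut-off $\widetilde{\chi}_\gamma I$ cannot be commuted past the regularizer $N^{(-1)}$ (membership in $\Lambda$ only gives commutation with $\cZ$ modulo compacts, and $\widetilde{\chi}_\gamma I\notin\cZ$). Moreover your two requirements on $\widetilde{\chi}_\gamma$ are incompatible: you need $\operatorname{supp}\widetilde{\chi}_\gamma\subset\gamma\setminus\overline{U}$ to kill $(a-\widetilde a)\Pi_r$, but then $|a|$ is bounded away from zero on $\operatorname{supp}\widetilde{\chi}_\gamma$, so $\widetilde{\chi}_\gamma aI$ is not small; and the proposed ``shrinking family of cut-offs'' is blocked because $\gamma$ must have endpoints $\tau$ and $\alpha(\tau)$ (a full fundamental domain of the shift, needed for $\widetilde{\chi}_\gamma W_\alpha^n\chi_\gamma I=0$), so $\gamma$ cannot collapse onto $t_0$.

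The paper avoids this entirely by a different mechanism. It perturbs $a$ to $\varphi\in SO(\mR_+)$ vanishing identically on a closed neighbourhood $u$ of $t_0$ and close enough to $a$ that $\widetilde N=(\varphi I-bW_\alpha)P_++(cI-dW_\alpha)P_-$ stays Fredholm; Theorem~\ref{th:localization-realization} and Lemma~\ref{le:lifting}(a) then give invertibility of the coset $\varphi I-bW_\alpha+\cJ_{+\infty}$ in $\Lambda_{+\infty}$. A companion $\psi\in SO(\mR_+)$ with $\inf|\psi|>0$, agreeing with $\varphi$ off $\gamma$, yields via Theorem~\ref{th:FO} an explicit operator $C=\chi_u\psi(\psi I-bW_\alpha)^{-1}$ with $C(\varphi I-bW_\alpha)=\chi_u\varphi I=0$; multiplying by the coset inverse forces $C\in\cJ_{+\infty}$ and hence $\chi_u^2I=C\chi_uI\in\cJ_{+\infty}$. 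The contradiction then comes from limit operators with respect to \emph{modulations} (Lemmas~\ref{le:LO-compact-modulations} and~\ref{le:LO-modulations}): every element of $\cJ_{+\infty}$ has zero limit operator because $(P_-)_{\cE_\mu^{+\infty}}=0$, while $(\chi_u^2I)_{\cE_\mu^{+\infty}}=\chi_u^2I\ne 0$. Note also that part (b) is not obtained by ``interchanging $a$ and $b$'' but by multiplying $N$ on the left by $-W_\alpha^{-1}$ and applying part (a) to the resulting operator with the inverse shift $\beta$. As written, your proof does not close, and I do not see how to repair it along the proposed lines without importing something like the ideal-theoretic argument above.
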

\begin{proof}
(a) Assume the contrary, that is, $\inf\limits_{t\in\mR_+}|a(t)|=0$. From
\eqref{eq:nec-1-1} it follows that there exist numbers $0<m<M<\infty$ such
that the function $a$ is bounded away from zero on $(0,m]\cup[M,\infty)$.
Hence there is a point $t_0\in(m,M)$ such that $a(t_0)=0$. Fix some
$\tau$ such that $t_0$ belongs to the interior of the segment $\gamma$
with the endpoints $\tau$ and $\alpha(\tau)$. Choose $m$ and $M$
such that $\gamma\subset[m,M]$. Suppose $u=u(t_0)$ is a
closed neighborhood of the point $t_0$ that is contained in $\gamma$ and whose
endpoints do not coincide with $\tau$ and $\alpha(\tau)$. Then there exists
a continuous function $\chi_u$ supported in $u$ and such
that $\chi_u(t_0)=1$.

Let $\varphi,\psi\in SO(\mR_+)$ be functions such that
\begin{enumerate}
\item
$a(t)=\varphi(t)=\psi(t)$ for $t\in(0,m]\cup[M,\infty)$;

\item
$\varphi(t)=0$ for $t\in u$ and $\varphi(t)\ne 0$ for $t\in\mR\setminus u$;

\item
$\psi(t)\ne 0$ for $t\in\mR$;

\item
$\varphi(t)=\psi(t)$ for $t\in\mR\setminus\gamma$.
\end{enumerate}
Consider the operator
\[
\widetilde{N}=(\varphi I-bW_\alpha)P_++(cI-dW_\alpha)P_-.
\]
It is clear that $\|\widetilde{N}-N\|_\cB=O(\|\varphi-a\|_{L^\infty(\mR_+)})$.
Since  $\varphi$ can be chosen arbitrarily close to $a$ in the norm of
$L^\infty(\mR_+)$ and $N$ is Fredholm, we can guarantee that $\widetilde{N}$
is also Fredholm for some $\varphi$ as above.

By Theorem~\ref{th:localization-realization}, the coset
$\widetilde{N}^\pi+\cJ_{+\infty}^\pi$ is invertible in the quotient algebra
$\Lambda_{+\infty}^\pi$. Therefore, the coset $\varphi I-bW_\alpha+\cJ_{+\infty}$
is invertible in the quotient algebra $\Lambda_{+\infty}$ in view of
Lemma~\ref{le:lifting}(a). Then there exists an operator $B\in\Lambda$ such
that
\begin{equation}\label{eq:nec-2-1}
(\varphi I-bW_\alpha+\cJ_{+\infty})(B+\cJ_{+\infty})=I+\cJ_{+\infty}.
\end{equation}
On the other hand, $\inf\limits_{t\in\mR_+}|\psi(t)|>0$, and by
\eqref{eq:nec-1-1} we have for $s\in\{0,\infty\}$,
\[
L_*(s;\psi,b,\alpha)=L_*(s;a,b,\alpha)>0.
\]
By Theorem~\ref{th:FO}, the operator $\psi I-bW_\alpha$ is invertible and
\[
(\psi I-bW_\alpha)^{-1}
=
\sum_{n=0}^\infty\left(\frac{b}{\psi}W_\alpha\right)^n\frac{1}{\psi}I
=
\frac{1}{\psi}\sum_{n=0}^\infty\left(\frac{b}{\psi\circ\alpha}W_\alpha\right)^n.
\]
Let
\[
C:=\chi_u\psi(\psi I-bW_\alpha)^{-1}.
\]
From Theorem~\ref{th:embeddings} we see that $C\in\cF\subset\Lambda$.
From the choice of $\varphi$ and $\psi$ it follows that $\chi_u\varphi=0$
and $\chi_u(\varphi\circ\alpha_k)=\chi_u(\psi\circ\alpha_k)$ for all $k\in\mN$.
Therefore,
\[
C(\varphi I-bW_\alpha)=
\chi_u\left(\sum_{n=0}^\infty\frac{b}{\psi\circ\alpha}W_\alpha\right)
(\varphi I-bW_\alpha)=\chi_u\varphi I=0.
\]
Hence
\begin{equation}\label{eq:nec-2-2}
(C+\cJ_{+\infty})(\varphi I-bW_\alpha+\cJ_{+\infty})=\cJ_{+\infty}.
\end{equation}
Multiplying \eqref{eq:nec-2-2} from the right by $B+\cJ_{+\infty}$ and taking
into account \eqref{eq:nec-2-1}, we obtain $C+\cJ_{+\infty}=\cJ_{+\infty}$.
Then $C\in\cJ_{+\infty}$.

It is clear that $\chi_u\circ\alpha_k=0$ for $k\in\mN$. Then
\[
C\chi_u I=\chi_u\sum_{n=0}^\infty\left(\frac{b}{\psi\circ\alpha}W_\alpha\right)^n\chi_u I
=\chi_u^2 I\in\cJ_{+\infty}.
\]
From Lemmas~\ref{le:LO-compact-modulations}, \ref{le:LO-modulations},
and~\ref{le:LO-properties} it follows that for an arbitrary sequence of
pseudoisometries $\cE_\mu^{+\infty}=\{E_{\mu_n}\}_{n=1}^\infty\subset\cB$,
the limit operators for all operators $J\in\cJ_{+\infty}$ are equal to zero.
In particular, then $(\chi_u^2 I)_{\cE_\mu^{+\infty}}=0$.
On the other hand, since $\chi_u^2\in SO(\mR_+)$, from Lemma~\ref{le:LO-modulations}
it also follows that $(\chi_u^2 I)_{\cE_\mu^{+\infty}}=\chi_u^2I\ne 0$.
This contradiction shows that $\inf\limits_{t\in\mR_+}|a(t)|>0$. Part (a)
is proved.

\medskip
(b) If the operator $N$ is Fredholm, then the operator
\begin{align}
-W_\alpha^{-1}[(aI &-bW_\alpha)P_++(cI-dW_\alpha)P_-]
\nonumber
\\
&=
[(b\circ\beta)I-(a\circ\beta)W_\beta]P_+
+[(d\circ\beta)I-(c\circ\beta)W_\beta]P_-
\label{eq:nec-2-3}
\end{align}
is also Fredholm. Recall that $\beta\in SOS(\mR_+)$ by Lemma~\ref{le:SOS-inverse}.
Then from Lemma~\ref{le:continuous-SOS} we see that $a\circ\beta,b\circ\beta\in SO(\mR_+)$.
Since $\beta$ preserves the orientation, has only
two fixed points $0$ and $\infty$, and $\log\alpha'$ is bounded,
we obtain for $s\in\{0,\infty\}$,
\[
\begin{split}
L^*(s;a,b,\alpha)
&=-\liminf_{t\to s}
\big((\alpha'\circ\beta)(t)\big)^{-1/p}
\Big(
|(b\circ\beta)(t)|
-
|(a\circ\beta)(t)|(\beta'(t))^{-1/p}
\Big)
\\
&\ge
-\sup_{t\in\mR_+}
\big((\alpha'\circ\beta)(t)\big)^{-1/p}
L_*(s;b\circ\beta,a\circ\beta,\beta).
\end{split}
\]
Hence \eqref{eq:nec-1-2} implies that $L_*(s;b\circ\beta,a\circ\beta,\beta)>0$ for $s\in\{0,\infty\}$.
Applying part (a) to the operator \eqref{eq:nec-2-3}, we obtain
\[
0<\inf_{t\in\mR_+}|(b\circ\beta)(t)|=\inf_{t\in\mR_+}|b(t)|.
\]
Part (b) is proved.
Parts (c) and (d) are proved by analogy with parts (a) and (b), respectively.
\QED
\end{proof}
Combining Lemmas~\ref{le:nec-1}--\ref{le:nec-2} and
Theorem~\ref{th:FO}, we arrive at the following part of
Theorem~\ref{th:main}.
\begin{theorem}\label{th:nec-i}
Suppose $a,b,c,d\in SO(\mR_+)$, $\alpha\in SOS(\mR_+)$, and the operator $N$
is given by \eqref{eq:def-N}.  If the operator $N$ is Fredholm on the space
$L^p(\mR_+)$, then the functional operators $A_+:=aI-bW_\alpha$ and
$A_-:=cI-dW_\alpha$ are invertible on the space $L^p(\mR_+)$.
\end{theorem}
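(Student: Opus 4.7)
The plan is to deduce Theorem~\ref{th:nec-i} as an almost immediate corollary of the three tools already developed in Section~\ref{sec:necessity}: Lemma~\ref{le:nec-1}, Lemma~\ref{le:nec-2}, and the invertibility criterion Theorem~\ref{th:FO}. Recall that Theorem~\ref{th:FO} states that $aI-bW_\alpha$ is invertible on $L^p(\mR_+)$ if and only if one of the two alternatives \eqref{eq:FO-1} or \eqref{eq:FO-2} is fulfilled, so it suffices to verify that Fredholmness of $N$ forces one of those two alternatives to hold for the pair $(a,b)$, and analogously for $(c,d)$.

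First I would treat $A_+:=aI-bW_\alpha$. Invoking Lemma~\ref{le:nec-1}(a) with the hypothesis that $N$ is Fredholm, I obtain the dichotomy: either \eqref{eq:nec-1-1} is fulfilled, i.e.\ $L_*(0;a,b,\alpha)>0$ and $L_*(\infty;a,b,\alpha)>0$, or \eqref{eq:nec-1-2} is fulfilled, i.e.\ $L^*(0;a,b,\alpha)<0$ and $L^*(\infty;a,b,\alpha)<0$. In the first case, Lemma~\ref{le:nec-2}(a) yields $\inf_{t\in\mR_+}|a(t)|>0$, so condition \eqref{eq:FO-1} of Theorem~\ref{th:FO} holds in full and $A_+$ is invertible. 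In the second case, Lemma~\ref{le:nec-2}(b) supplies $\inf_{t\in\mR_+}|b(t)|>0$, so condition \eqref{eq:FO-2} of Theorem~\ref{th:FO} holds and $A_+$ is again invertible.

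The argument for $A_-:=cI-dW_\alpha$ is completely parallel: Lemma~\ref{le:nec-1}(b) gives the dichotomy \eqref{eq:nec-1-3} or \eqref{eq:nec-1-4}, and Lemma~\ref{le:nec-2}(c) or (d) produces the corresponding infimum condition $\inf|c|>0$ or $\inf|d|>0$, so Theorem~\ref{th:FO} again yields invertibility of $A_-$.

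The genuine difficulty of Theorem~\ref{th:nec-i} has already been absorbed into Lemmas~\ref{le:nec-1} and~\ref{le:nec-2}; at the level of the final theorem the only task is to match the hypotheses of Theorem~\ref{th:FO}. The principal subtlety I would watch for is merely bookkeeping: making sure that the case of \eqref{eq:nec-1-1}/\eqref{eq:nec-1-2} is paired with the correct conclusion in Lemma~\ref{le:nec-2} (that is, the $L_*$ alternative is matched with $\inf|a|>0$ and the $L^*$ alternative with $\inf|b|>0$), which is exactly what is required to trigger \eqref{eq:FO-1} versus \eqref{eq:FO-2}. Once the alternatives are lined up correctly, the theorem follows in one line for each of $A_+$ and $A_-$.
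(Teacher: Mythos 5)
Your proposal is correct and is exactly the paper's argument: the authors state Theorem~\ref{th:nec-i} as an immediate consequence of combining Lemmas~\ref{le:nec-1} and~\ref{le:nec-2} with Theorem~\ref{th:FO}, pairing the $L_*$ alternative with $\inf|a|>0$ (resp.\ $\inf|c|>0$) to get \eqref{eq:FO-1} and the $L^*$ alternative with $\inf|b|>0$ (resp.\ $\inf|d|>0$) to get \eqref{eq:FO-2}. Your bookkeeping of the cases matches the paper's intended deduction precisely.
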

\subsection{Necessity of condition (ii)}
To finish the proof of Theorem~\ref{th:main}, it remains to
prove the following.
\begin{theorem}
\label{th:nec-ii}
Suppose $a,b,c,d\in SO(\mR_+)$, $\alpha\in SOS(\mR_+)$, the operator $N$
is given by \eqref{eq:def-N}, and for every $\xi\in\Delta$ the function
$n_\xi$  is defined by \eqref{eq:def-n}.
If the operator $N$ is Fredholm on the space $L^p(\mR_+)$, then
$n_\xi(x)\ne 0$ for every pair $(\xi,x)\in\Delta\times\mR$.
\end{theorem}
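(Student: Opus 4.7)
The strategy is to reduce the assertion to the invertibility criterion for Mellin convolutions with semi-almost periodic symbols (Theorem~\ref{th:invertibility-convolution}) by passing to the dilation limit operator of $N$ at an arbitrary $\xi\in\Delta$.

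First, I would fix $\xi\in\Delta$, so that $\xi\in M_s(SO(\mR_+))$ for some $s\in\{0,\infty\}$, and invoke Lemma~\ref{le:LO-dilations} to produce a test sequence $h^\xi=\{h_n^\xi\}\subset\mR_+$ relative to $s$ for which the limit operator $N_{\cV_{h^\xi}^s}$ exists and is given by
\[
N_{\cV_{h^\xi}^s}=\bigl(a(\xi)I-b(\xi)W_{\alpha_\xi}\bigr)P_++\bigl(c(\xi)I-d(\xi)W_{\alpha_\xi}\bigr)P_-,
\]
where $\alpha_\xi(t)=e^{\omega(\xi)}t$ is a multiplicative shift. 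Since $N\in\Lambda$ is Fredholm, by Lemma~\ref{le:alg-Lambda} the coset $N^\pi$ is invertible in $\Lambda^\pi$; by Lemma~\ref{le:LO-compact-dilations} every compact operator has zero dilation limit; hence Theorem~\ref{th:inv-quotient-algebra} (with $\fA=\Lambda$, $\fJ=\cK$, $\cU=\cV_{h^\xi}^s$) yields that $N_{\cV_{h^\xi}^s}$ is invertible on $L^p(\mR_+)$.

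Next, I would transport $N_{\cV_{h^\xi}^s}$ to $L^p(\mR_+,d\mu)$ via the isomorphism $\Phi$ of \eqref{eq:def-Phi} and identify it as a Mellin convolution. By Theorem~\ref{th:algebra-A}, $\Phi S\Phi^{-1}=\operatorname{Co}(s_p)$ with $s_p(x)=\coth[\pi(x+i/p)]$, so $\Phi P_\pm\Phi^{-1}=\operatorname{Co}\bigl((1\pm s_p)/2\bigr)$. By Lemma~\ref{le:mult-shift-convolution} applied to the multiplicative shift $\alpha_\xi$ with $k=e^{\omega(\xi)}$,
\[
\Phi W_{\alpha_\xi}\Phi^{-1}=\operatorname{Co}(m_\xi),\qquad m_\xi(x)=e^{i\omega(\xi)(x+i/p)}.
\]
Since $\Phi$ commutes with multiplications by constants, combining these identities gives
\[
\Phi N_{\cV_{h^\xi}^s}\Phi^{-1}=\operatorname{Co}(n_\xi),
\]
with $n_\xi$ precisely the function in \eqref{eq:def-n}.

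Finally, I would verify that $n_\xi\in SAP_p$, so that Theorem~\ref{th:invertibility-convolution} applies. The function $s_p$ lies in $C_p(\overline{\mR})$ by Theorem~\ref{th:algebra-A}; the function $m_\xi(x)=e^{-\omega(\xi)/p}e^{i\omega(\xi)x}$ is a constant multiple of an almost periodic polynomial, hence lies in $AP_p\subset SAP_p$; and $n_\xi$ is obtained from these by the algebraic operations defining $SAP_p$. Since $\operatorname{Co}(n_\xi)$ is invertible on $L^p(\mR_+,d\mu)$, Theorem~\ref{th:invertibility-convolution} forces $\inf_{x\in\mR}|n_\xi(x)|>0$; in particular $n_\xi(x)\neq 0$ for every $x\in\mR$. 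As $\xi\in\Delta$ was arbitrary, this is exactly the desired conclusion.

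The only non-mechanical point is the bookkeeping in the third step (recognizing $\Phi N_{\cV_{h^\xi}^s}\Phi^{-1}$ as $\operatorname{Co}(n_\xi)$ and matching it with \eqref{eq:def-n}); everything else is a direct application of the tools already assembled in Sections~\ref{sec:Mellin-convolution}--\ref{sec:localization}.
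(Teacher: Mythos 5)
Your proposal is correct and follows essentially the same route as the paper: pass to the dilation limit operator at $\xi$ (whose invertibility follows from Fredholmness of $N$ via Theorem~\ref{th:inv-quotient-algebra}, exactly as in the proof of Lemma~\ref{le:nec-1}), identify it through $\Phi$ with $\operatorname{Co}(n_\xi)$ using Theorem~\ref{th:algebra-A} and Lemma~\ref{le:mult-shift-convolution}, and invoke Theorem~\ref{th:invertibility-convolution}. The only difference is that you re-derive the invertibility of the limit operator in-line and spell out why $n_\xi\in SAP_p$, both of which the paper leaves to a back-reference.
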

\begin{proof}
Fix $\xi\in\Delta$. In the proof of Lemma~\ref{le:nec-1} it was shown that
if $N$ is Fredholm, then the operator $N_{\cV_{h^\xi}^s}$ given by
\eqref{eq:def-limit-N}
with $\alpha_\xi(t)=e^{\omega(\xi)}t$ is invertible.
On the other hand, taking into account
Theorem~\ref{th:algebra-A} and Lemma~\ref{le:mult-shift-convolution},
we see that $N_{\cV_{h^\xi}^s}=\Phi^{-1} \operatorname{Co}(n_\xi)\Phi$,
where $n_\xi\in SAP_p$ is given by \eqref{eq:def-n}. Hence,
$\operatorname{Co}(n_\xi)$ is invertible on the space $L^p(\mR_+,d\mu)$. Then
from Theorem~\ref{th:invertibility-convolution} we deduce that
$\inf\limits_{x\in\mR}|n_\xi(x)|>0$. Since $s\in\{0,\infty\}$ and $\xi
\in M_s(SO(\mR_+))$ were chosen arbitrarily, we conclude that
$n_\xi(x)\ne 0$ for all $(\xi,x)\in\Delta\times\mR$.
\QED
\end{proof}
\subsection*{Acknowledgment}
This work is partially supported by ``Centro de An\'alise Funcional e Aplica\c{c}\~oes"
at Instituto Superior T\'ecnico (Lisboa, Portugal), which is financed by FCT (Portugal).
The second author is also supported by the SEP-CONACYT Project No. 25564 (M\'exico)
and by PROMEP (M\'exico) via ``Proy\-ecto de Redes".

\end{document}